\newcommand{\Z}{{\mathbb Z}}
\theoremstyle{plain} \numberwithin{equation}{section}
\newtheorem{thm}{Theorem}[section]
\newtheorem{theorem}[thm]{Theorem}
\newtheorem{lemma}[thm]{Lemma}
\newtheorem{definition}[thm]{Definition}
\begin{document}
\setcounter{page}{1}

\title[ Hasan and Padhan]{On the triple tensor product of generalized Heisenberg Lie superalgebra of rank $\leq2$}

\author{IBRAHEM YAKZAN HASAN}
\address{Centre for Data Science, Institute of Technical Education and Research \\
	Siksha `O' Anusandhan (A Deemed to be University)\\
	Bhubaneswar-751030 \\
	Odisha, India}
\email{ibrahemhasan898@gmail.com}
\author[Padhan]{Rudra Narayan Padhan}
\address{Centre for Data Science, Institute of Technical Education and Research  \\
	Siksha `O' Anusandhan (A Deemed to be University)\\
	Bhubaneswar-751030 \\
	Odisha, India}
\email{rudra.padhan6@gmail.com, rudranarayanpadhan@soa.ac.in}

\subjclass[2020]{Primary 17B10, Secondary 17B01.}
\keywords{Heisenberg Lie superalgebra, multiplier, capability, non-abelian tensor and exterior product}
\maketitle

\begin{abstract}
In this article, we compute the Schur multiplier of all generalized Heisenberg Lie superalgebras of rank $2$. We discuss the structure of  $\otimes^3H$ and $\wedge^3H$ where $H$ is a generalized Heisenberg Lie superalgebra of rank $\leq2$. Moreover, we prove that if $L$ is an $(m\mid n)$-dimensional non-abelian nilpotent Lie superalgebra with derived subalgebra of dimension $(r\mid s)$, then $\dim\otimes^3L \leq (m+n)(m+n - (r+s))^2$. In particular, for $r=1,s=0$ the equality holds if and only if $L \cong H(1\mid 0)$.
\end{abstract}

\section{Introduction}

Lie superalgebras have applications in many areas of Mathematics and Theoretical Physics as they can be used to describe supersymmetry. Kac \cite{Kac1977} gives a comprehensive description of the mathematical theory of Lie superalgebras, and establishes the classification of all finite dimensional simple Lie superalgebras over an algebraically closed field of characteristic zero. In the last few years, the theory of Lie superalgebras has evolved remarkably, obtaining many results in representation theory and classification. Most of the results are extensions of well known facts of Lie algebras. But the classification of all finite dimensional nilpotent Lie superalgebras is still an open problem like that of finite dimensional nilpotent Lie algebras. In the last few years, the theory of Lie superalgebras has evolved remarkably, obtaining many results in representation theory and classification. Most of the results are an extension of well-known facts of Lie algebras \cite{MC2011, Musson2012, GKL2015}.  Recently, Garc\'{i}a-Mart\'{i}nez \cite{GKL2015} introduced the notions of non-abelian tensor product of Lie superalgebras and the exterior product of Lie superalgebras over a commutative ring. In this paper, we determine an upper bound on the dimension of the non-abelian tensor product of nilpotent Lie superalgebra.

\smallskip

In 1991, Rocco \cite{Rocco} proved that if $G$ is a finite $p$-group of order $P^n$ with the derived subgroup of order $p^m$, then $G\otimes G \leq p^{n(n-m)}$. Furthermore, this bound was improved by Niroomand \cite{p2010}, i.e., $G\otimes G \leq p^{n(n-m)}$. Ellis \cite{Ellis1987,Ellis1991} developed the theory of tensor product for Lie algebra. If $L$ and $K$ are two finite dimensional nilpotent Lie algebras, then an upper bound and lower bound of the dimension of the non-abelian tensor product $L\otimes K$ have been studied by salemkar et al. \cite{s2010} and the results are a generalization of Rocco's results for finite $p$-group.  Recently, an improved upper bound on the dimension of  $L\otimes L$ has been discussed in \cite{Niroomand}, i.e., if $L$ is a non-abelian nilpotent Lie algebra of dimension $n$ and its derived subalgebra has dimension $m$, then $ \dim (L\otimes L) \leq (n-m)(n-1)+2$. 
Recently, Padhan et al. in \cite{hasanpadprad} have determined an upper bound for the non-abelian tensor product of finite dimensional Lie superalgebra. More precisely, they proved that if $L$ is a non-abelian nilpotent Lie superalgebra of dimension $(k \mid l)$ and its derived subalgebra has dimension $(r \mid s)$, then $ \dim (L\otimes L) \leq (k+l-(r+s))(k+l-1)+2$. They discussed the conditions when equality holds for $r=1, s=0$ explicitly.

\smallskip

A Lie superalgebras is said to be generalized Heisenberg Lie superalgebras of rank $(m\mid n)$ if $L'=Z(L)$ and $\dim Z(L)=(m \mid n)$. In this paper, we intend to compute the Schur multiplier of all generalized Heisenberg Lie superalgebras of rank $2$. Here we give the structure of the triple non-abelian tensor product and the exterior product of the generalized Heisenberg Lie superalgebra of rank $\leq2$. In the last section, we prove that if $L$ is a $(m\mid n)$-dimensional non-abelian nilpotent Lie superalgebra with derived subalgebra of dimension $(r\mid s)$, then $\dim\otimes^3L \leq (m+n)(m+n - (r+s))^2$ and we show that if $L \cong H(1\mid 0)$ then the equality holds.

\section{Preliminaries}

Let $\mathbb{Z}_{2}=\{\bar{0}, \bar{1}\}$ be a field. A $\mathbb{Z}_{2}$-graded vector space $V$ is simply a direct sum of vector spaces $V_{\bar{0}}$ and $V_{\bar{1}}$, i.e., $V = V_{\bar{0}} \oplus V_{\bar{1}}$. It is also referred as a superspace. We consider all vector superspaces and superalgebras are over field $\mathbb{F}$ (characteristic of $\mathbb{F} \neq 2,3$). Elements in $V_{\bar{0}}$ (resp. $V_{\bar{1}}$) are called even (resp. odd) elements. Non-zero elements of $V_{\bar{0}} \cup V_{\bar{1}}$ are called homogeneous elements. For a homogeneous element $v \in V_{\sigma}$, with $\sigma \in \mathbb{Z}_{2}$ we set $|v| = \sigma$ as the degree of $v$. A  subsuperspace (or, subspace)  $U$ of $V$ is a $\mathbb{Z}_2$-graded vector subspace where  $U= (V_{\bar{0}} \cap U) \oplus (V_{\bar{1}} \cap U)$. We adopt the convention that whenever the degree function appears in a formula, the corresponding elements are supposed to be homogeneous. 

\smallskip 

A  Lie superalgebra (see \cite{Kac1977, Musson2012}) is a superspace $L = L_{\bar{0}} \oplus L_{\bar{1}}$ with a bilinear mapping $ [., .] : L \times L \rightarrow L$ satisfying the following identities:
\begin{enumerate}
\item $[L_{\alpha}, L_{\beta}] \subset L_{\alpha+\beta}$, for $\alpha, \beta \in \mathbb{Z}_{2}$ ($\mathbb{Z}_{2}$-grading),
\item $[x, y] = -(-1)^{|x||y|} [y, x]$ (graded skew-symmetry),
\item $(-1)^{|x||z|} [x,[y, z]] + (-1)^{ |y| |x|} [y, [z, x]] + (-1)^{|z| |y|}[z,[ x, y]] = 0$ (graded Jacobi identity),
\end{enumerate}
for all $x, y, z \in L$. Clearly $L_{\bar{0}}$ is a Lie algebra, and $L_{\bar{1}}$ is a $L_{\bar{0}}$-module. If $L_{\bar{1}} = 0$, then $L$ is just Lie algebra, but in general, a Lie superalgebra is not a Lie algebra.  A Lie superalgebra $L$ is called abelian if  $[x, y] = 0$ for all $x, y \in L$. Lie superalgebras without even part, i.e., $L_{\bar{0}} = 0$, are  abelian. A subsuperalgebra (or subalgebra) of $L$ is a $\mathbb{Z}_{2}$-graded vector subspace that is closed under bracket operation. The graded subalgebra $[L, L]$, of $L$, is known as the derived subalgebra of $L$. A $\mathbb{Z}_{2}$-graded subspace $I$ is a graded ideal of $L$ if $[I, L]\subseteq I$. The ideal 
\[Z(L) = \{z\in L : [z, x] = 0\;\mbox{for all}\;x\in L\}\] 
is a graded ideal and it is called the {\it center} of $L$. A homomorphism between superspaces $f: V \rightarrow W $ of degree $|f|\in \mathbb{Z}_{2}$, is a linear map satisfying $f(V_{\alpha})\subseteq W_{\alpha+|f|}$ for $\alpha \in \mathbb{Z}_{2}$. In particular, if $|f| = \bar{0}$, then the homomorphism $f$ is called the homogeneous linear map of even degree. A Lie superalgebra homomorphism $f: L \rightarrow M$ is a  homogeneous linear map of even degree such that $f([x,y]) = [f(x), f(y)]$ holds for all $x, y \in L$.  If $I$ is an ideal of $L$, the quotient Lie superalgebra $L/I$ inherits a canonical Lie superalgebra structure such that the natural projection map becomes a homomorphism. The notions of {\it epimorphisms, isomorphisms,} and {\it automorphisms} have obvious meaning. 
\smallskip

Throughout this article, for the superdimension of Lie superalgebra $L$, we simply write $\dim L=(m\mid n)$, where $\dim L_{\bar{0}} = m$ and $\dim L_{\bar{1}} = n$. Also,  $A(m \mid n)$ denotes an abelian Lie superalgebra where $\dim A=(m\mid n)$. A  Lie superalgebra $L$ is said to be a Heisenberg Lie superalgebra if $Z(L)=L'$ and $\dim Z(L)=1$. According to the homogeneous generator of $Z(L)$, Heisenberg Lie superalgebras can further split into even or odd Heisenberg Lie superalgebras \cite{MC2011}. By Heisenberg Lie superalgebra we mean special Heisenberg Lie superalgebra in this article. For more details on Heisenberg Lie superalgebra and its,  multiplier see \cite{Nayak2018,SN2018b,Padhandetec,p50, p54,p55,hesam}. Also, for more details on the Schur multiplier and capability of Lie algebras, see \cite{Alamian2008,Batten1993,Batten1996,BS1996,p1,Ellis1987,Ellis1991,Ellis1995,Ellis1996,
Hardy1998,Hardy2005,org,Kar1987,Moneyhun1994,N2,Niroomand2011,Russo2011,PMF2013}. Now we list some useful results from \cite{Nayak2018}, for further use.

\smallskip

The descending central sequence of a Lie superalgebra $L = L_{\bar{0}} \oplus L_{\bar{1}}$ is defined by $L^{1} = L, L^{k+1} = [L^{k}, L]$ for all $k\geq 1$. If $L^{k+1} = \{0\}$ for some $k+1$, with $L^{k} \neq 0$, then $L$ is called nilpotent with nilpotency class $k$. One can observe that generalized Heisenberg Lie superalgebra of rank $(r|s)$ is of nilpotency class two. We denote $\gamma_k(L)$ be the $k$-th term of the descending central series of $L$.

\smallskip

The concepts of stem extension, cover and multiplier of Lie superalgebra are defined and studied  \cite{Nayak2018, SN2018a, NPP}. One can see \cite{p51,p52,p53} for more details on isoclinism and the central extension of Lie superalgebras.  Free presentation of a Lie superalgebra $L$ is the extension $0\longrightarrow R \longrightarrow F\longrightarrow L$, where $F$ is a free Lie superalgebra, then $\mathcal{M}( L) \cong F'\cap R / [F,R]$.
 
\begin{lemma}\label{th3.3}\cite[See Theorem 3.4]{Nayak2018}
\[\dim \mathcal{M}(A(m \mid n)) = \big(\frac{1}{2}(m^2+n^2+n-m)\mid mn \big).\]
\end{lemma}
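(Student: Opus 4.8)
The plan is to exploit the free-presentation description of the multiplier recalled above, namely $\mathcal{M}(L)\cong (F'\cap R)/[F,R]$ for a free presentation $0\to R\to F\to L\to 0$. For the abelian superalgebra $A=A(m\mid n)$ the underlying superspace $V=V_{\bar 0}\oplus V_{\bar 1}$ has $\dim V_{\bar 0}=m$ and $\dim V_{\bar 1}=n$, and $A$ is recovered as the abelianization of the free Lie superalgebra $F=F(V)$ on $V$. Hence the most economical presentation takes $R=F'$, and the formula collapses to $\mathcal{M}(A)\cong F'/[F,F']=\gamma_2(F)/\gamma_3(F)$, the degree-two homogeneous component of the free Lie superalgebra modulo the degree-three part.

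Next I would identify $\gamma_2(F)/\gamma_3(F)$ with the \emph{super exterior square} $\wedge^2 V$. The point is that the free Lie superalgebra is graded by bracket length, and in length two the only relation among the brackets $[v,w]$ of generators is the graded skew-symmetry $[v,w]=-(-1)^{|v||w|}[w,v]$; the graded Jacobi identity first becomes active in length three and so imposes no constraint on $\gamma_2/\gamma_3$. Consequently $\gamma_2(F)/\gamma_3(F)$ is precisely the quotient of $V\otimes V$ by the subspace spanned by the elements $v\otimes w+(-1)^{|v||w|}w\otimes v$, the graded analogue of $\wedge^2$.

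The computation then reduces to reading off the graded dimension of this quotient by splitting $V\otimes V$ according to the parities of the two tensor factors. On $V_{\bar 0}\otimes V_{\bar 0}$ the sign is $+1$, so one obtains the ordinary exterior square $\wedge^2 V_{\bar 0}$, of (even) dimension $\binom{m}{2}$. On the mixed part $V_{\bar 0}\otimes V_{\bar 1}$ the sign is again $+1$, identifying each $v\otimes w$ with $-w\otimes v$, so the quotient is isomorphic to $V_{\bar 0}\otimes V_{\bar 1}$, of (odd) dimension $mn$. The subtle case, and the step I expect to be the main obstacle, is $V_{\bar 1}\otimes V_{\bar 1}$: here $|v||w|=1$, so the relation becomes $v\otimes w-w\otimes v$, i.e.\ the \emph{symmetric} square $S^2 V_{\bar 1}$ survives rather than the antisymmetric one, contributing an \emph{even} subspace of dimension $\binom{n+1}{2}$.

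Collecting the three pieces gives the even part $\binom{m}{2}+\binom{n+1}{2}=\tfrac{1}{2}(m^2+n^2+n-m)$ and the odd part $mn$, which is exactly the claimed value $\big(\tfrac12(m^2+n^2+n-m)\mid mn\big)$. The only genuine care required is the sign bookkeeping in the odd--odd block, where the superalgebra structure flips antisymmetry into symmetry and thereby moves that contribution into the even part of the multiplier.
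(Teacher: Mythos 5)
Your computation is correct: taking the free presentation $0\to F'\to F\to A(m\mid n)\to 0$ with $F$ the free Lie superalgebra on $V$, the formula $\mathcal{M}(A)\cong (F'\cap R)/[F,R]$ collapses to $\gamma_2(F)/\gamma_3(F)$, which is the super exterior square of $V$ since the graded Jacobi identity only produces relations in bracket length $\geq 3$; your sign bookkeeping in the three parity blocks (in particular that the odd--odd block contributes the \emph{symmetric} square $S^2V_{\bar 1}$ to the \emph{even} part) reproduces exactly $\bigl(\tfrac{1}{2}(m^2+n^2+n-m)\mid mn\bigr)$.

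One point of comparison worth noting: the paper itself gives no proof of this lemma --- it is quoted verbatim from Nayak \cite[Theorem 3.4]{Nayak2018} --- so there is no in-paper argument to measure yours against; your derivation is the standard one and is essentially what the cited source does. The only step you assert rather than prove is that length-two elements of the free Lie superalgebra satisfy no relations beyond graded skew-symmetry; if you want this airtight, realize $F$ inside the tensor superalgebra $T(V)$ via supercommutators, where the degree-two component is visibly the image of $1-\sigma$ for the super-flip $\sigma(v\otimes w)=(-1)^{|v||w|}w\otimes v$, hence isomorphic to $V\otimes V/\ker(1-\sigma)$, which is precisely your quotient.
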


\smallskip 

A  Lie superalgebra $L$ is said to be Heisenberg Lie superalgebra if $Z(L)=L'$ and $\dim Z(L)=1$. According to the homogeneous generator of $Z(L)$, Heisenberg Lie superalgebras can further split into even or odd Heisenberg Lie superalgebras \cite{MC2011}.

\begin{lemma} \label{th3.4}\cite[See Theorem 4.2, 4.3]{Nayak2018}
Every Heisenberg Lie superalgebra with an even center has dimension $(2m+1 \mid n)$, is isomorphic to $H(m , n)=H_{\overline{0}}\oplus H_{\overline{1}}$, where
\[H_{\overline{0}}=<x_{1},\ldots,x_{2m},z \mid [x_{i},x_{m+i}]=z,\ i=1,\ldots,m>\]
and 
\[H_{\overline{1}}=<y_{1},\ldots,y_{n}\mid [y_{j}, y_{j}]=z,\  j=1,\ldots,n>.\]
Further,
$$ 
\dim \mathcal{M}(H(m , n))=
\begin{cases}
 (2m^{2}-m+n(n+1)/2-1 \mid 2mn)\quad \mbox{if}\;m+n\geq 2\\
(0 \mid 0) \quad \mbox{if}\;m=0, n=1\\  
   (2 \mid 0)\quad \mbox{if}\;m=1, n=0.
\end{cases}
$$

\end{lemma}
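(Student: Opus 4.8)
The plan is to prove the statement in two stages: first the structural classification of Heisenberg Lie superalgebras with even centre, and then the dimension formula for their Schur multiplier, reducing the latter to the abelian case of Lemma \ref{th3.3}.

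For the classification I would write $Z(L)=L'=\langle z\rangle$ with $z$ even. The bracket then descends to a bilinear map $\beta\colon \bar L\times\bar L\to\langle z\rangle\cong\mathbb{F}$ on $\bar L=L/Z(L)$. The $\mathbb{Z}_2$-grading together with graded skew-symmetry forces $\beta$ to be alternating on $\bar L_{\bar 0}$, symmetric on $\bar L_{\bar 1}$, and identically zero on mixed pairs, since a mixed bracket would be odd and hence land in $L'_{\bar 1}=0$. Because $Z(L)$ is exactly the centre, the radical of $\beta$ is graded and trivial, so $\beta$ is non-degenerate on each homogeneous block. A non-degenerate alternating form forces $\dim\bar L_{\bar 0}=2m$, hence $\dim L_{\bar 0}=2m+1$, and a symplectic basis yields the relations $[x_i,x_{m+i}]=z$. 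Over a field of characteristic $\neq 2$ the non-degenerate symmetric form on $\bar L_{\bar 1}$ diagonalises, and after rescaling we obtain $[y_j,y_j]=z$; this produces the presentation $H(m,n)$ with $\dim L=(2m+1\mid n)$.

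For the multiplier I would start from the short exact sequence $0\to\mathcal{M}(H)\to H\wedge H\xrightarrow{\kappa} H'\to 0$, where $\kappa(a\wedge b)=[a,b]$, so that $\dim\mathcal{M}(H)=\dim(H\wedge H)-1$. The quotient $\bar H=H/H'=A(2m\mid n)$ is abelian, so $\bar H\wedge\bar H=\mathcal{M}(A(2m\mid n))$ is already computed in Lemma \ref{th3.3} to have dimension $(2m^2-m+n(n+1)/2\mid 2mn)$. The core of the argument is to compare $H\wedge H$ with $\bar H\wedge\bar H$ through the Ganea-type exact sequence for the central extension $0\to\langle z\rangle\to H\to\bar H\to 0$, namely $\bar H\otimes\langle z\rangle\xrightarrow{g}\mathcal{M}(H)\to\mathcal{M}(\bar H)\xrightarrow{\delta}\langle z\rangle\to 0$. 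Here $\delta$ is surjective onto the one-dimensional $\langle z\rangle$, so the inflation map identifies $\mathcal{M}(H)$ with $\ker\delta$ up to the image of the Ganea map $g$. The remaining task is to show $\operatorname{Im} g=0$ in the generic range, equivalently (via the exterior-square description) that every generator $h\wedge z$ is already forced by the symplectic relations $z=[x_i,x_{m+i}]$ and the graded Jacobi identity inside $H\wedge H$. Granting this, $\dim\mathcal{M}(H)=\dim\mathcal{M}(A(2m\mid n))-(1\mid 0)$, which is exactly the stated value for $m+n\geq 2$.

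The main obstacle is precisely this vanishing step: getting the exact dimension rather than an inequality requires controlling the connecting and Ganea maps with the correct super-signs, which is most safely carried out by writing an explicit free presentation $0\to R\to F\to H\to 0$ and computing $\mathcal{M}(H)=(F'\cap R)/[F,R]$ on a basis of iterated brackets of the generators. The two low-dimensional cases must be handled by hand, since the generic reduction needs $m+n\geq 2$: for $m=0,n=1$ one has $H=\langle z\rangle\oplus\langle y\rangle$ with $[y,y]=z$, and a direct computation gives $\mathcal{M}=(0\mid 0)$; for $m=1,n=0$ one recovers the $3$-dimensional Heisenberg Lie algebra, where the Ganea map $\bar H\otimes\langle z\rangle\to\mathcal{M}(H)$ is injective rather than zero, so that $\ker\delta=0$ is augmented by the full $2$-dimensional Ganea image, producing the anomalous value $(2\mid 0)$ instead of the $(0\mid 0)$ predicted by the generic formula. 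I expect the super-sign bookkeeping in the exterior square and this single $(1\mid 0)$ anomaly to be the only genuinely delicate points.
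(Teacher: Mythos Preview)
The paper does not supply a proof of this lemma; it is quoted verbatim from \cite{Nayak2018} (Theorems~4.2 and~4.3) as background material, so there is no ``paper's own proof'' to compare against.

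Your outline is the standard route and is essentially correct. For the classification, reducing to the induced form $\beta$ on $L/Z(L)$ and splitting it into a non-degenerate alternating form on the even part and a non-degenerate symmetric form on the odd part is exactly how such results are proved; note only that the final rescaling $[y_j,y_j]=z$ (rather than $[y_j,y_j]=\alpha_j z$) tacitly assumes every scalar is a square in $\mathbb{F}$, an assumption present already in the cited source. For the multiplier, your use of the five-term/Ganea sequence for the central extension $0\to\langle z\rangle\to H\to\bar H\to 0$ is the right tool, and your identification of the crux --- that $\operatorname{Im} g=0$, i.e.\ $h\wedge z=0$ in $H\wedge H$ for all $h$ when $m+n\geq 2$ --- is accurate. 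This step is not merely bookkeeping: one argues that for each basis element $h$ there is a pair $(a,b)$ disjoint from $h$ with $z=[a,b]$ (possible precisely when $m+n\geq 2$), and then expands $h\wedge[a,b]$ via the defining relation $m\wedge[n,n']=(-1)^{|n'|(|m|+|n|)}({}^{n'}m\wedge n)-(-1)^{|m||n|}({}^{n}m\wedge n')$, where each term vanishes because $[a,h]=[b,h]=0$. Your reading of the anomalous case $H(1,0)$ is also correct: there $\ker\delta=0$ and the Ganea map is injective, so the whole of $\mathcal{M}(H)$ is its $2$-dimensional image. For $H(0,1)$ the generic formula in fact already returns $(0\mid 0)$, so a separate argument is not strictly necessary, although a direct check is trivial.
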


Similarly the multiplier and cover for Heisenberg Lie superalgebra of odd center is known.

\begin{lemma}\label{th3.6}\cite[See Theorem 2.8]{SN2018b}
Every Heisenberg Lie superalgebra, with an odd center has dimension $(m \mid m+1)$, is isomorphic to $H_{m}=H_{\overline{0}}\oplus H_{\overline{1}}$, where
\[H_{m}=<x_{1},\ldots,x_{m} , y_{1},\ldots,y_{m},z \mid [x_{j},y_{j}]=z,   j=1,\ldots,m>.\]
Further,
$$
\dim \mathcal{M}(H_{m})=
\begin{cases}
 (m^{2}\mid m^{2}-1)\quad \mbox{if}\;m\geq 2\\  
(1\mid 1) \quad \quad \mbox{if}\;m=1.  
  \end{cases}
$$
\end{lemma}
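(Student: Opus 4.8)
The plan is to treat the two assertions separately, dealing first with the structural classification and then with the multiplier via a free presentation. For the classification, fix a nonzero generator $z$ of the odd centre, so that $Z(L)=L'=\langle z\rangle$ has dimension $(0\mid 1)$. Since every bracket lands in $L'$, the Lie bracket induces a graded bilinear form $\bar\beta\colon V\times V\to\langle z\rangle$ on $V=L/Z(L)=V_{\bar 0}\oplus V_{\bar 1}$ (it descends because $z$ is central). As $z$ is odd, $\bar\beta$ must vanish on $V_{\bar 0}\times V_{\bar 0}$ and on $V_{\bar 1}\times V_{\bar 1}$, because brackets of elements of equal parity are even and hence cannot be nonzero multiples of $z$; in particular $[y_i,y_i]=0$, in contrast with the even-centre case of Lemma~\ref{th3.4}. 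What remains is a pairing $V_{\bar 0}\times V_{\bar 1}\to\mathbb F$, and the hypothesis that $Z(L)$ is exactly the centre forces this pairing to be non-degenerate on each side. Hence $\dim V_{\bar 0}=\dim V_{\bar 1}=:m$ and $\dim L=(m\mid m+1)$. Choosing a basis $x_1,\dots,x_m$ of a homogeneous lift of $V_{\bar 0}$ together with the dual basis $y_1,\dots,y_m$ of a lift of $V_{\bar 1}$ normalises the pairing to $[x_i,y_j]=\delta_{ij}z$ with all other generator brackets zero, which is precisely the presentation of $H_m$.

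For the multiplier I would use the free presentation $0\to R\to F\to H_m\to 0$ with $F$ the free Lie superalgebra on the even generators $x_1,\dots,x_m$ and odd generators $y_1,\dots,y_m$, and compute $\mathcal M(H_m)\cong (F'\cap R)/[F,R]=R/[F,R]$ (the relations all lie in $F'$). Since $H_m$ is nilpotent of class two, $R=\gamma_3(F)+R^{(2)}$, where $R^{(2)}$ is the degree-two part of $R$ and, in a free Lie superalgebra, $\gamma_k(F)=\bigoplus_{n\ge k}F^{(n)}$. Consequently $[F,R]=\gamma_4(F)+[F,R^{(2)}]$, and a degree-by-degree inspection shows that every homogeneous component of $R/[F,R]$ in degree $\ge 4$ vanishes (there $R$ and $\gamma_4(F)$ agree), so only degrees two and three survive. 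The degree-two contribution is $R^{(2)}$ itself: decomposing the free degree-two space as $\wedge^2W_{\bar 0}\oplus S^2W_{\bar 1}$ in even degree and $W_{\bar 0}\otimes W_{\bar 1}$ in odd degree (with $W=\langle x_i,y_j\rangle$), the map to $L'=\langle z\rangle$ kills the entire even part and has a one-dimensional image on the odd part, giving $\dim R^{(2)}=(m^2\mid m^2-1)$ by the same binomial bookkeeping that yields $\dim\mathcal M(A(m\mid m))=(m^2\mid m^2)$ from Lemma~\ref{th3.3}.

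The heart of the argument, and the step I expect to be the main obstacle, is the degree-three component $\gamma_3(F)^{(3)}/[W,R^{(2)}]$. Modulo $[W,R^{(2)}]$ every degree-three bracket collapses to $[w,z]$ for $w\in W$, since $z=[x_1,y_1]$ spans the unique degree-two direction outside $R^{(2)}$; applying the graded Jacobi identity and using that $[x_i,x_j]$, $[y_i,y_j]$ and $[x_i,y_j]$ with $i\ne j$ all lie in $R^{(2)}$ reduces each $[x_k,z]$ and $[y_k,z]$ with $k\ne 1$, as well as $[y_1,z]$, into $[W,R^{(2)}]$ (the last using $\mathrm{char}\,\mathbb F\ne 2$). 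The single potentially surviving class is the odd element $[x_1,z]=[x_1,[x_1,y_1]]$. When $m\ge 2$ one rewrites $z\equiv[x_2,y_2]\pmod{R^{(2)}}$ and expands $[x_1,[x_2,y_2]]$ by Jacobi into terms involving $[x_1,x_2]$ and $[x_1,y_2]$, both in $R^{(2)}$; hence $[x_1,z]\in[W,R^{(2)}]$, the degree-three part vanishes, and $\mathcal M(H_m)=(m^2\mid m^2-1)$. When $m=1$ no second pair is available, the Jacobi reduction of $[x_1,z]$ is merely tautological, and this odd class genuinely survives, contributing an extra $(0\mid 1)$ to the degree-two value $(1\mid 0)$ and yielding $\mathcal M(H_1)=(1\mid 1)$. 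The delicate points to carry out in full are the graded-sign bookkeeping in the Jacobi expansions and the verification that $\gamma_4(F)$ absorbs all higher degrees; a useful consistency check is the five-term exact sequence for the central extension $0\to L'\to H_m\to A(m\mid m)\to 0$, whose surjection onto $L'$ accounts exactly for the drop from $(m^2\mid m^2)$ to $(m^2\mid m^2-1)$ in the odd part.
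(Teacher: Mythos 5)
The paper never proves this lemma: it is quoted, statement and multiplier formula alike, from \cite[Theorem 2.8]{SN2018b}, so there is no in-paper argument to compare yours against. Judged on its own terms, your reconstruction is correct and follows the route one would expect from the cited literature. The classification half is the standard one: since $z$ is odd, parity forces the induced form on $V=L/Z(L)$ to vanish on $V_{\bar{0}}\times V_{\bar{0}}$ and $V_{\bar{1}}\times V_{\bar{1}}$ (equal-parity brackets are even, while $L'=\langle z\rangle$ is odd), exactness of the centre makes the residual pairing $V_{\bar{0}}\times V_{\bar{1}}\to \mathbb{F}$ non-degenerate on both sides, and dual bases normalise it to $[x_i,y_j]=\delta_{ij}z$, giving $\dim L=(m\mid m+1)$ and $L\cong H_m$. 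The multiplier half correctly uses the formula $\mathcal{M}(L)\cong (F'\cap R)/[F,R]$ from the paper's preliminaries: $R\subseteq F'$ indeed holds, $R=R^{(2)}\oplus\gamma_3(F)$ because $R\cap F^{(1)}=0$, the quotient is concentrated in bracket-degrees $2$ and $3$, and $\dim R^{(2)}=(m^2\mid m^2-1)$ since the degree-two space $\bigl(\wedge^2W_{\bar{0}}\oplus S^2W_{\bar{1}}\bigr)\oplus\bigl(W_{\bar{0}}\otimes W_{\bar{1}}\bigr)$ has dimension $(m^2\mid m^2)$ and the map to $\langle z\rangle$ kills the even part. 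Your Jacobi reductions in degree three are also right: $[x_k,z]$ and $[y_k,z]$ with $k\neq 1$ collapse via $[x_k,x_1],[x_k,y_1]\in R^{(2)}$; the identity $2[y_1,[x_1,y_1]]=[x_1,[y_1,y_1]]$ handles $[y_1,z]$ (this is where $\mathrm{char}\,\mathbb{F}\neq 2$ enters); and for $m\geq 2$ the substitution $z\equiv[x_2,y_2]\pmod{R^{(2)}}$ kills $[x_1,z]$, so $\mathcal{M}(H_m)\cong R^{(2)}$ of dimension $(m^2\mid m^2-1)$.

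The single assertion you leave unverified is the one your $m=1$ case rests on: that $[x_1,[x_1,y_1]]$ is genuinely nonzero in $F^{(3)}/[W,R^{(2)}]$, i.e.\ nonzero in the free Lie superalgebra. The Jacobi identity being ``tautological'' there shows the class is not \emph{forced} to vanish, which is not the same as showing it does not vanish. This is easy to close: by PBW the free Lie superalgebra embeds in the tensor algebra on $x_1,y_1$, where $[x_1,[x_1,y_1]]=x_1x_1y_1-2x_1y_1x_1+y_1x_1x_1\neq 0$; moreover it is odd while $[W,R^{(2)}]_{\bar{1}}=0$ (its only odd spanning element $[y_1,[y_1,y_1]]$ vanishes for $\mathrm{char}\,\mathbb{F}\neq 3$). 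With that filled in, the extra $(0\mid 1)$ survives and $\mathcal{M}(H_1)\cong A(1\mid 1)$, completing the proof. The closing five-term-sequence remark is fine as a consistency check but, as you say, it only accounts for the drop in the odd part; it does not replace the degree-three analysis.
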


For any two Lie superalgebras $H$ and $K$ the Lie superalgebra direct sum $H \oplus K$ is a Lie superalgebra with natural grading $(H \oplus K)_{\alpha}=H_{\alpha}\oplus K_{\alpha}$ where $\alpha \in \mathbb{Z}_2$. If $\mathcal{M}(H)$ and $\mathcal{M}(K)$ are known, then $\mathcal{M}(H \oplus K)$ is given by the following result.
\begin{lemma}\label{th3.7}\cite[See Theorem 3.9]{Nayak2018}
For Lie superalgebras $H$ and $K$,
\[\mathcal{M}(H\oplus K)\cong \mathcal{M}(H)\oplus \mathcal{M}(K)\oplus (H/H^2\otimes K/K^2).\]
\end{lemma}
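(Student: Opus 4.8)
The plan is to manufacture a free presentation of $H \oplus K$ from free presentations of the two factors and then evaluate the Hopf-type formula $\mathcal{M}(L) \cong (F' \cap R)/[F,R]$ recorded above. First I would fix free presentations $0 \to R_1 \to F_1 \to H$ and $0 \to R_2 \to F_2 \to K$, and set $F = F_1 \ast F_2$, the free product (coproduct) of $F_1$ and $F_2$ in the category of Lie superalgebras. Since the coproduct of free Lie superalgebras is again free (on the disjoint union of the two generating sets), $F$ is free, and the maps $F_i \to H \oplus K$ glue to a surjection $F \to H \oplus K$. As $H \oplus K$ is the quotient of the free product $H \ast K$ by the ideal generated by all mixed brackets, the kernel is $R = \langle R_1, R_2, [F_1,F_2] \rangle$, the ideal generated by these.

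Next I would reduce the mixed commutators modulo $[F,R]$. From $R_1 \subseteq R$ and $R_2 \subseteq R$ one gets $[R_1, F_2] \subseteq [F,R]$ and $[F_1, R_2] \subseteq [F,R]$, so a bracket $[a,b]$ with $a \in F_1$, $b \in F_2$ depends only on the images of $a$ in $H$ and of $b$ in $K$. Moreover $[F_1,F_2] \subseteq R$ gives $[F_1,[F_1,F_2]] \subseteq [F,R]$, whence the graded Jacobi identity forces $[F_1', F_2] \subseteq [F,R]$, and symmetrically $[F_1, F_2'] \subseteq [F,R]$; thus $[a,b]$ in fact depends only on the classes $\bar a \in H/H'$ and $\bar b \in K/K'$. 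This produces a well-defined, sign-respecting bilinear pairing and hence a surjection $H/H' \otimes K/K' \to \big([F_1,F_2]+[F,R]\big)/[F,R]$. (Here $H/H^2 = H/H'$ in the notation of the statement.)

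Then I would assemble the three summands. The inclusions $H \hookrightarrow H \oplus K$ and $K \hookrightarrow H \oplus K$ are split by the coordinate projections, so by functoriality of $\mathcal{M}$ the induced maps on multipliers are split injections; their images, namely $(F_1' \cap R_1 + [F,R])/[F,R]$ and its $K$-analogue, realize $\mathcal{M}(H) \oplus \mathcal{M}(K)$ as a direct summand of $\mathcal{M}(H \oplus K)$. The complementary summand is exactly the image of the mixed block $[F_1,F_2]$, and the pairing above identifies it with $H/H' \otimes K/K'$ provided that pairing is injective.

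The crux, and the main obstacle, is the bookkeeping inside the free product: one must show that, modulo $[F,R]$, the intersection $F' \cap R$ is the internal direct sum of the three blocks $F_1' \cap R_1$, $F_2' \cap R_2$ and $[F_1,F_2]$ with no hidden relations linking them (this simultaneously settles the injectivity left open above), and that the graded-sign conventions make the third block precisely the super-tensor product and not a twisted variant. A cleaner, essentially sign-free alternative is homological: identify $\mathcal{M}(L)$ with the second homology $H_2(L)$, note that the Chevalley--Eilenberg complex of a direct sum is the super tensor product of the factors' complexes, and apply the K\"unneth theorem over $\mathbb{F}$ to obtain $H_2(H \oplus K) \cong \bigoplus_{i+j=2} H_i(H) \otimes H_j(K)$; substituting $H_0(L) = \mathbb{F}$, $H_1(L) = L/L'$ and $H_2(L) = \mathcal{M}(L)$ reproduces the stated decomposition at once.
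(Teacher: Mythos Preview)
The paper does not supply a proof of this lemma at all: it is quoted verbatim from \cite[Theorem 3.9]{Nayak2018} and used as a black box, so there is no ``paper's own proof'' to compare against.

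Your outline is sound in both variants. The free-product/Hopf-formula route is the one actually carried out in \cite{Nayak2018} (and, for ordinary Lie algebras, in \cite{Batten1993}); the step you flag as the crux---checking that $(F'\cap R)/[F,R]$ decomposes as the internal direct sum of the two pure blocks and the mixed block $[F_1,F_2]$---is handled there by a normal-form argument in the free product, using that $F = F_1 \oplus F_2 \oplus [F_1,F_2]$ as a graded module and tracking which pieces land in $[F,R]$. Your K\"unneth alternative is genuinely different and, as you say, cleaner: once one accepts the identification $\mathcal{M}(L)\cong H_2(L)$ (which holds for Lie superalgebras over a field of characteristic $\neq 2,3$, see \cite{GKL2015}), the Chevalley--Eilenberg complex of $H\oplus K$ is the graded tensor product of the two factor complexes, and K\"unneth over the field $\mathbb{F}$ gives the decomposition with no bookkeeping in a free product. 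The trade-off is that the Hopf-formula proof stays entirely within the free-presentation framework the paper has already set up, whereas the homological argument imports a separate identification of $\mathcal{M}$ with $H_2$.
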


\begin{lemma}\label{th4.4}\cite[Proposition 3.4]{SN2018b}\cite[Theorem 6.9]{Padhandetec} \label{th5a}
Let $L$ be a nilpotent Lie superalgebra of dimension $(k \mid l)$ with $\dim L'=(r \mid s)$, where $r+s=1$. If $r=1, s=0$ then $L \cong H(m,n)\oplus A(k-2m-1 \mid l-n)$ for $m+n\geq 1$. If $r=0, s=1$ then $L \cong H_{m} \oplus A(k-m \mid l-m-1)$. Moreover, $L$ is capable if and only if either $L \cong H(1 , 0)\oplus A(k-3 \mid l)$ or $L \cong H_{1}\oplus A(k-1 \mid l-2)$.
\end{lemma}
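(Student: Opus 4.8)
The plan is to establish the structural description by linear algebra and then settle capability through the epicentre criterion together with the multiplier computations of Lemmas~\ref{th3.3}, \ref{th3.4}, \ref{th3.6} and the direct-sum formula of Lemma~\ref{th3.7}. First I would fix the structure. Since $L$ is nilpotent with $\dim L'=1$, write $L'=\langle z\rangle$ with $z$ homogeneous. The graded subspace $L^{3}=[L',L]$ lies in the one-dimensional ideal $L'$, so it is either $0$ or all of $L'$; the latter would make the descending central series stabilise, contradicting nilpotency. Hence $[L',L]=0$, i.e. $z\in Z(L)$ and $L$ has class two, so the bracket factors as $[x,y]=\beta(x,y)z$ for a scalar graded bilinear form $\beta$ whose radical is exactly $Z(L)$. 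When $r=1,s=0$ the element $z$ is even, so $\beta$ kills mixed-parity pairs, restricting to a nondegenerate alternating form on $L_{\bar 0}/Z(L)_{\bar 0}$ (hence of even rank $2m$) and, since $\mathrm{char}\,\mathbb{F}\neq 2$, a diagonalisable nondegenerate symmetric form on $L_{\bar 1}/Z(L)_{\bar 1}$ (rank $n$). Lifting a symplectic basis $x_{1},\dots,x_{2m}$ and an orthonormal basis $y_{1},\dots,y_{n}$ to $L$, the subalgebra they generate with $z$ is $H(m,n)$, and any complement of $\langle z\rangle$ in $Z(L)$ is a central abelian ideal meeting it trivially, giving $L\cong H(m,n)\oplus A(k-2m-1\mid l-n)$ with $m+n\geq 1$ as $L$ is non-abelian. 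When $r=0,s=1$ the element $z$ is odd, $\beta$ becomes a nondegenerate pairing between $L_{\bar 0}/Z(L)_{\bar 0}$ and $L_{\bar 1}/Z(L)_{\bar 1}$; these acquire a common dimension $m$, dual bases give the relations $[x_{j},y_{j}]=z$ of $H_{m}$, and the same splitting yields $L\cong H_{m}\oplus A(k-m\mid l-m-1)$.

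For capability I would use that $L$ is capable if and only if its epicentre $Z^{*}(L)$ vanishes, together with the criterion that, for a central ideal $N$, one has $N\subseteq Z^{*}(L)$ exactly when the inflation $\mathcal{M}(L)\to\mathcal{M}(L/N)$ is injective. I first reduce $Z^{*}(L)$ to the derived direction. Take any central $w$ with $w\notin L'$. The relevant portion of the five-term exact sequence shows that the cokernel of the inflation $\mathcal{M}(L)\to\mathcal{M}(L/\langle w\rangle)$ is $\ker(\langle w\rangle\to L/L')=0$, so this inflation is surjective; on the other hand Lemma~\ref{th3.7}, applied after writing $L=H\oplus A$ with $H$ the Heisenberg summand and $A$ abelian, gives $\dim\mathcal{M}(L)>\dim\mathcal{M}(L/\langle w\rangle)$ because deleting an abelian generator strictly drops both $\mathcal{M}(A)$ and the cross term $(H/H^{2})\otimes A$. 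Hence this surjection has nontrivial kernel, is not injective, and $w\notin Z^{*}(L)$. Therefore $Z^{*}(L)\subseteq\langle z\rangle=L'$, and capability is equivalent to $z\notin Z^{*}(L)$.

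It remains to test the single direction $z$. For $N=\langle z\rangle\subseteq L'$ the five-term sequence reads
\[
\mathcal{M}(L)\longrightarrow\mathcal{M}(L/\langle z\rangle)\longrightarrow\langle z\rangle\longrightarrow 0,
\]
so the inflation has codimension-one image and $z\in Z^{*}(L)$ precisely when $\dim\mathcal{M}(L)=\dim\mathcal{M}(L/\langle z\rangle)-1$. The decisive simplification is that, applying Lemma~\ref{th3.7} to $L=H\oplus A$ and to $L/\langle z\rangle=(H/H^{2})\oplus A$, the summands $\mathcal{M}(A)$ and $(H/H^{2})\otimes A$ cancel, leaving
\[
\dim\mathcal{M}(L)-\dim\mathcal{M}(L/\langle z\rangle)=\dim\mathcal{M}(H)-\dim\mathcal{M}(H/\langle z\rangle),
\]
so that $H\oplus A$ is capable exactly when the bare Heisenberg algebra $H$ is, independently of $A$. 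Since $H/\langle z\rangle$ is abelian, I would read off $\dim\mathcal{M}(H/\langle z\rangle)$ from Lemma~\ref{th3.3} and $\dim\mathcal{M}(H)$ from Lemma~\ref{th3.4} (even centre) or Lemma~\ref{th3.6} (odd centre); a direct comparison shows the quadratically growing abelian multiplier overtakes the Heisenberg one as the rank increases, the difference being nonnegative only for $H(1,0)$ (value $+1$) and $H_{1}$ (value $0$) and negative otherwise. This isolates exactly $L\cong H(1,0)\oplus A(k-3\mid l)$ and $L\cong H_{1}\oplus A(k-1\mid l-2)$. The step I expect to be most delicate is fixing the epicentre criterion in the correct direction and carrying the super-grading bookkeeping, namely tracking which graded component of the multiplier detects $z$, correctly through Lemmas~\ref{th3.4} and~\ref{th3.6}.
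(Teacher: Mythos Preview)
The paper does not supply its own proof of this lemma: it is quoted verbatim from \cite[Proposition~3.4]{SN2018b} and \cite[Theorem~6.9]{Padhandetec} and used as a black box, so there is no in-paper argument to compare your proposal against.

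As a self-contained argument your outline is broadly sound, but two points deserve care. First, in the structural step for the even-centre case you pass to an \emph{orthonormal} basis for the symmetric form on $L_{\bar 1}/Z(L)_{\bar 1}$; that rescaling needs square roots in $\mathbb{F}$, i.e.\ an algebraically closed (or at least quadratically closed) ground field, which is the standing hypothesis in \cite{SN2018b} but is not explicit here. Second, in your reduction $Z^{*}(L)\subseteq L'$ you treat a central $w\notin L'$ as if it were an abelian generator of the summand $A$, but a homogeneous central element can have the form $\alpha z+a$ with $\alpha\neq 0$ and $a\in A\setminus\{0\}$; you should either note that one may replace $w$ by $a$ after an evident change of coordinates, or argue directly via the exterior criterion $Z^{*}(L)=Z^{\wedge}(L)$ and Lemma~\ref{lemma3}(i). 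With those two points tightened, your multiplier comparison via Lemmas~\ref{th3.3}, \ref{th3.4}, \ref{th3.6} and \ref{th3.7} reproduces exactly the dichotomy that the cited references establish.
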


Consider the exact sequence of Lie superalgebras (see \cite{hesam})
\begin{align*}
&\mathcal{H}_3(L) \longrightarrow \mathcal{H}_3(L/I) \longrightarrow \mathcal{M}(L, I) \longrightarrow \mathcal{M}(L) \longrightarrow \mathcal{M}(L/I)\\ 
& \longrightarrow I/[L,I] \longrightarrow L/L^2 \longrightarrow L/[L^2+I] \longrightarrow 0.
\end{align*}
Then the schur multiplier of pair of Lie superalgebras $(L,I)$  is the abelian Lie superalgebra $\mathcal{M}( L,I)$ which appear in the above sequence. This definition is analogous to the definition of the Schur multiplier of a pair of groups and pair of Lie algebras \cite{Ellis1996, org}. Free presentation of a Lie superalgebra $L$ is the extension $0\longrightarrow R \longrightarrow F\longrightarrow L$, where $F$ is a free Lie superalgebra. If $I$ is a graded ideal of $L$ such that $I\cong S/R$ for some graded ideal $S$ of $F$, then $$\mathcal{M}( L,I) \cong R\cap [F,S] / [F,R].$$

\begin{lemma} \label{c001}\cite[Corollary 4.6]{p55}
Let $I$ be a graded ideal of an abelian Lie superalgebra $A(m \mid n)$. Then 
	\[\mathcal{M}(A(m \mid n),I)\cong  A(m \mid n)\wedge I\]
\end{lemma}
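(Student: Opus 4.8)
The plan is to push everything through a free presentation, reduce the relative Schur multiplier to a single commutator quotient, and then match that quotient with the exterior product. Fix a free presentation $0 \longrightarrow R \longrightarrow F \longrightarrow A(m\mid n)$ and choose a graded ideal $S$ of $F$ with $R \subseteq S$ and $I \cong S/R$; such an $S$ exists precisely because $I$ is a graded ideal of $A(m\mid n)$. The single observation that drives the whole argument is that $A(m\mid n)$ is abelian, so its derived subalgebra is trivial; hence the commutator subalgebra $F' = [F,F]$ is sent to $0$ under $F \to A(m\mid n)$, which gives the inclusion $F' \subseteq R$.

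By the free-presentation formula for the Schur multiplier of a pair recalled just above the statement, $\mathcal{M}(A(m\mid n), I) \cong (R \cap [F,S])/[F,R]$. Since $S \subseteq F$ we have $[F,S] \subseteq [F,F] = F' \subseteq R$, and therefore $R \cap [F,S] = [F,S]$, so that $\mathcal{M}(A(m\mid n), I) \cong [F,S]/[F,R]$. This is where abelianness is essential: it is exactly the vanishing of the derived subalgebra of $A(m\mid n)$ that collapses the intersection $R \cap [F,S]$ onto all of $[F,S]$.

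It remains to identify $[F,S]/[F,R]$ with the exterior product $A(m\mid n) \wedge I$, and I would do this by exhibiting two mutually inverse homomorphisms. In one direction, send a generator $\bar{a} \wedge \bar{s}$ of $A(m\mid n) \wedge I$, where $\bar a$ is the image of $a \in F$ and $\bar s$ the image of $s \in S$, to the coset $[a,s] + [F,R]$; independence of the lifts holds because replacing $s$ by $s+r$ with $r \in R$ alters the bracket by $[a,r] \in [F,R]$, and replacing $a$ by $a+r'$ alters it by $[r',s] \in [R,F] = [F,R]$. The reverse map sends $[a,s]+[F,R] \mapsto \bar a \wedge \bar s$ and is well defined because any element of $[F,R]$ maps to $\bar f \wedge \bar r = \bar f \wedge 0 = 0$, as $R$ maps to $0$ in $I = S/R$. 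The heart of the matter is checking that the defining relations of the exterior product die in $[F,S]/[F,R]$: here abelianness pays off a second time, since $A(m\mid n)$ acts trivially on $I$ and $I$ acts trivially on $A(m\mid n)$, so the action relations of the non-abelian tensor and exterior product degenerate to plain graded bilinearity together with the graded alternating relation, both visibly satisfied by the bracket map modulo $[F,R]$. Equivalently and more conceptually, the commutator map $A(m\mid n) \wedge I \to A(m\mid n)$, $x \wedge y \mapsto [x,y]$, has image $[A(m\mid n), I] = 0$, so its kernel, which is exactly $\mathcal{M}(A(m\mid n), I)$, is all of $A(m\mid n) \wedge I$. The step I expect to be the main obstacle is the bookkeeping of the Koszul signs in the super setting when transporting the alternating and bilinearity relations across the bracket map; once the trivial-action reduction is in place this verification is routine, but it must be carried out with care on the odd generators.
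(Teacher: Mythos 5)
First, note that the paper itself offers no proof of this lemma: it is quoted from \cite[Corollary 4.6]{p55}, so your argument can only be compared with the definitions the paper recalls (the homology exact sequence defining $\mathcal{M}(L,I)$ and the free presentation formula $\mathcal{M}(L,I)\cong (R\cap[F,S])/[F,R]$) and with the argument expected in that source. Your first half is correct and is the natural start: taking $S$ to be the preimage of $I$ under $F\to A(m\mid n)$, abelianness gives $F'\subseteq R$, hence $[F,S]\subseteq F'\subseteq R$ and $\mathcal{M}(A(m\mid n),I)\cong [F,S]/[F,R]$. Your forward map is also fine: $(\bar a,\bar s)\mapsto [a,s]+[F,R]$ is independent of the lifts, is a Lie superpairing into the abelian Lie superalgebra $[F,S]/[F,R]$ (abelianness of the target, which you need but do not check, follows from $[[F,S],[F,S]]\subseteq[R,F]=[F,R]$), and it kills the even squares and the graded-symmetric elements, so it induces a surjection $A(m\mid n)\wedge I\to [F,S]/[F,R]$.

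The genuine gap is injectivity, i.e., your inverse map. To define a homomorphism $[F,S]/[F,R]\to A(m\mid n)\wedge I$ by $[f,s]+[F,R]\mapsto \bar f\wedge\bar s$, you must first show that the assignment $[f,s]\mapsto \bar f\wedge \bar s$ is consistent on $[F,S]$: an element of $[F,S]$ has many different expressions as a linear combination of brackets, and nothing in your argument shows that two such expressions have equal images in $A(m\mid n)\wedge I$. Checking that the generators of $[F,R]$ go to zero presupposes that this map on $[F,S]$ already exists; it does not create it. This consistency is exactly the nontrivial content here---it is the pair analogue of the Hopf--Miller-type theorem identifying the multiplier inside an exterior product---and it is what makes the statement a theorem rather than a formality. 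Your ``more conceptual'' variant has the same hole in different clothing: the identification $\mathcal{M}(L,I)\cong\ker(L\wedge I\to L)$ is not the definition in force in this paper (where $\mathcal{M}(L,I)$ is defined homologically and computed by the free presentation formula); it is the pair version of Lemma \ref{lemma29}, a theorem that is not among the recalled results and is at least as strong as the lemma you are proving, so invoking it without proof or citation is circular in this context. The repair is to prove or cite the exact sequence $0\to\mathcal{M}(L,I)\to L\wedge I\to [L,I]\to 0$ for a graded ideal $I$ of $L$ (this is what the cited source rests on); once that is available, your closing observation---the bracket map $A(m\mid n)\wedge I\to A(m\mid n)$ vanishes because $A(m\mid n)$ is abelian---finishes the proof in one line, and the free presentation computation in your first half becomes unnecessary.
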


\begin{lemma}\label{c002}\cite[Corollary 4.7]{p55}
Let $I$ be a $(k\mid h)$-dimensional ideal of $(A(m \mid n)$. Then 
	\[\dim \mathcal{M}(A(m \mid n),I)=(\frac{1}{2}[k(2m-k-1)+h(2n-h+1)] \mid mn-(m-k)(n-h)).\] 
\end{lemma}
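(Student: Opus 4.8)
The plan is to invoke Lemma \ref{c001}, which identifies $\mathcal{M}(A(m\mid n), I)$ with the non-abelian exterior product $A(m\mid n) \wedge I$, thereby reducing the problem to a purely combinatorial dimension count. Since $A(m\mid n)$ is abelian with trivial self-action, the exterior product collapses to the graded (``super'') exterior product of the underlying $\mathbb{Z}_2$-graded vector spaces: the defining relations force $x \wedge x = 0$ on even generators but leave $y \wedge y$ free on odd ones, so the even--even component behaves like an ordinary alternating square while the odd--odd component behaves like a symmetric square. First I would fix a homogeneous basis $e_1,\dots,e_m$ of $A_{\bar 0}$ and $f_1,\dots,f_n$ of $A_{\bar 1}$ compatible with the ideal, i.e.\ $I_{\bar 0}=\langle e_1,\dots,e_k\rangle$ and $I_{\bar 1}=\langle f_1,\dots,f_h\rangle$, and then split $A \wedge I$ into its even and odd homogeneous parts.

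For the even part, two families of generators contribute: the alternating products $e_a\wedge e_b$ with $b\le k$, and the symmetric products $f_c\wedge f_d$ with $d\le h$. The span of the first family is the image of $A_{\bar 0}\otimes I_{\bar 0}$ inside $\wedge^2 A_{\bar 0}$, whose dimension is $\binom{m}{2}-\binom{m-k}{2}=\tfrac12 k(2m-k-1)$ (subtracting the wedges with both indices exceeding $k$). The span of the second family sits inside the symmetric square of $A_{\bar 1}$ and has dimension $\binom{n+1}{2}-\binom{n-h+1}{2}=\tfrac12 h(2n-h+1)$ by the same complementary count. Adding these gives the even component $\tfrac12\bigl[k(2m-k-1)+h(2n-h+1)\bigr]$.

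For the odd part, the cross terms $e_a\wedge f_d$ arise in two ways---from $A_{\bar 0}\wedge I_{\bar 1}$ (requiring $d\le h$) and from $A_{\bar 1}\wedge I_{\bar 0}$, where graded skew-symmetry identifies $f_d\wedge e_a$ with $\pm\,e_a\wedge f_d$ (requiring $a\le k$). Hence the odd component is spanned by those $e_a\wedge f_d$ for which $a\le k$ or $d\le h$, i.e.\ all $mn$ pairs except the $(m-k)(n-h)$ pairs with $a>k$ and $d>h$, giving dimension $mn-(m-k)(n-h)$. Assembling the even and odd parts yields the claimed superdimension.

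The main obstacle I anticipate is not the counting but the structural justification of the first paragraph: one must verify carefully that for an abelian Lie superalgebra the non-abelian exterior product really does reduce to this super-exterior construction, with the odd--odd piece genuinely symmetric (contributing to the even grading) and with no further relations among the generators beyond graded skew-symmetry. In particular the identification of $A_{\bar 0}\wedge I_{\bar 1}$ with $A_{\bar 1}\wedge I_{\bar 0}$ must be handled so as to avoid double-counting the cross terms, and one should confirm that the three spans are linearly independent across the grading so that the dimensions simply add. Once this is in place---consistency can be cross-checked against Lemma \ref{th3.3} by setting $I=A(m\mid n)$, i.e.\ $k=m,\,h=n$, which recovers $\bigl(\tfrac12(m^2+n^2+n-m)\mid mn\bigr)$---the formula follows.
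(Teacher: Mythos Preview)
Your proposal is correct, and your dimension count is sound. Note, however, that the paper does not supply its own proof of this lemma: it is quoted, together with the preceding Lemma~\ref{c001}, from \cite{p55} (as Corollaries~4.6 and~4.7 there), so there is nothing in the present paper to compare against. Reducing to $A(m\mid n)\wedge I$ via Lemma~\ref{c001} and then counting a homogeneous basis is the natural route, and since the two results are consecutive corollaries in the cited source it is almost certainly the argument used there as well.

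The structural concern you raise in your last paragraph can be resolved directly from Lemma~\ref{p111}: for $I\subseteq A=A(m\mid n)$ abelian one has $A\wedge I=(A\otimes_{\mathbb K} I)/\operatorname{Im}\psi$ with $\psi:\Gamma(I)\to A\otimes I$, and the image of $\psi$ is spanned precisely by $e_a\otimes e_a$ ($a\le k$), $e_a\otimes e_b+e_b\otimes e_a$ ($a,b\le k$), $f_c\otimes f_d-f_d\otimes f_c$ ($c,d\le h$), and $e_a\otimes f_c+f_c\otimes e_a$ ($a\le k$, $c\le h$). These are exactly the relations your count uses, so no further independence check is needed; equivalently, one can verify that the induced map $A\wedge I\to A\wedge A$ is injective, which justifies viewing $A\wedge I$ as the subspace of $\wedge^2 A$ you describe. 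Your consistency check against Lemma~\ref{th3.3} at $k=m$, $h=n$ confirms the arithmetic.
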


\section{ Non-abelian tensor products  and some known results of Lie superalgebras}

Consider $\mathbb{K}$ is a commutative ring. Recently, Garc\'{i}a-Mart\'{i}nez \cite{GKL2015} introduced the notions of non-abelian tensor product of Lie superalgebras and exterior product of Lie superalgebras over $\mathbb{K}$. Here we recall some of the known notation and results from \cite{GKL2015}.

Let $P$ and $M$ be two Lie superalgebras, then by an action of $P$ on $M$ we mean a $\mathbb{K}$-bilinear map of even degree, \[P\times M \longrightarrow M,~~~~~~~(p, m)\mapsto {}^ pm, \]
such that 
\begin{enumerate}
	\item ${}^{[p , p']} m = {}^p({}^{p'}m)-(-1)^{|p||p'|}~  {}^{p'}({}^{p}m),$
	
	\item ${}^p{[m , m']}=[{}^pm, m']+(-1)^{|p||m|}[m , {}^pm'],$
	
\end{enumerate}
for all homogeneous $p, p' \in P$ and $m, m' \in M$. For any Lie superalgebra $M$, the Lie multiplication induces an action on itself via ${}^mm'=[m , m']$.  The action of $P$ on $M$ is called trivial if ${}^pm=0$ for all $p \in P$ and $m \in M$. \\

Given two Lie superalgebras $M$ and $P$ with an action of $P$ on $M$, we define the semidirect product $M \rtimes P$ with underlying supermodule $M \oplus P$ endowed with the bracket given by
$[(m, p), (m', p')]=([m, m']+ {}^pm'-(-1)^{|m||p'|}({}^{p'}m), [p, p'])$. A crossed module of Lie superalgebras is a homomorphism of Lie superalgebras $\partial : M\longrightarrow P$ with an action of $P$ on $M$ satisfying 
\begin{enumerate}
	\item $\partial ({}^p{m})=[p,\partial(m)],$
	
	\item ${}^{\partial(m)}{m'}=[m, m'],$ for all $p \in P$ and $m, m' \in M$.
	
\end{enumerate} 

A bilinear function $f:M\times N\longrightarrow T$ is called the Lie superpairing if the following relations are satisfied:
\begin{enumerate}
	\item $f([m , m'], n)= f(m, {}^ {m'}{n}) -(-1)^{|m||m'|}f(m', {}^{m}{n})$,
	\item  $f(m, [n,n'])=(-1)^{|n'|(|m|+|n|)}f({}^{n'}{m}, n)-(-1)^{|m||n|}f({}^{n}{m}, n')$,
	\item $f({}^{n}{m} , {}^{m'}{n'})=-(-1)^{|m||n|}[f(m, n),f(m', n')]),$  
\end{enumerate} 
for every $m, m' \in M_{\overline{0}}\cup  M_{\overline{1}}$ and $n , n' \in N_{\overline{0}}\cup  N_{\overline{1}}$.

Let $M$ and $N$ be two Lie superalgebras with actions on each other. Let $X_{M , N}$ be the $\mathbb{Z}_{2}$-graded set of all symbols $m\otimes n$, where $m \in M_{\overline{0}}\cup  M_{\overline{1}}$, $n \in N_{\overline{0}}\cup  N_{\overline{1}}$ and the $\mathbb{Z}_{2}$-gradation is given by $|m\otimes n|=|m|+|n|$. The non-abelian tensor product of $M$ and $N$, denoted by $M \otimes N$, as the Lie superalgebra generated by $X_{M , N}$  and subject to the relations:
\begin{enumerate} 
	\item $\lambda (m \otimes n)=\lambda m \otimes n= m \otimes \lambda n$,
	\item $(m + m')\otimes n= m\otimes n +m'\otimes n$,  where $m , m'$ have the same degree,\\
	$m \otimes (n + n')=m \otimes n + m \otimes n'$,  where $n , n'$ have the same degree,
	
	\item $[m , m']\otimes n= (m\otimes {}^ {m'}{n}) -(-1)^{|m||m'|}(m'\otimes {}^{m}{n})$,\\
	$m\otimes [n,n']=(-1)^{|n'|(|m|+|n|)}({}^{n'}{m}\otimes n)-(-1)^{|m||n|}({}^{n}{m}\otimes n')$,
	
	\item $[m\otimes n,m'\otimes n']=-(-1)^{|m||n|}({}^{n}{m} \otimes {}^{m'}{n'}),$      
\end{enumerate}
for every $\lambda \in \mathbb{K}, m, m' \in M_{\overline{0}}\cup  M_{\overline{1}}$ and $n , n' \in N_{\overline{0}}\cup  N_{\overline{1}}$. The tensor product $M \otimes N$ has $\mathbb{Z}_{2}$-grading given by $(M \otimes N)_{\alpha}=\oplus_{\beta+\gamma=\alpha}  (M_{\beta}+N_{\gamma})$ for $\alpha, \beta, \gamma \in \Z_2$. If $M=M_{\overline{0}}$ and $N=N_{\overline{0}}$ then $M \otimes N$ is the non-abelian tensor product of Lie algebras which was introduced and studied in \cite{Ellis1991}.
\smallskip

Actions of Lie superalgebras $M$ and $N$ on each other are said to be compatible if 
\begin{enumerate}
	\item ${}^{({}^{n}{m})}{n'}=-(-1)^{|m||n|}[{}^{m}{n},n']$,
	\item ${}^{({}^{m}{n})}{m'}=-(-1)^{|m||n|}[{}^{n}{m},m']$,
\end{enumerate}
for all $m, m' \in M_{\overline{0}}\cup  M_{\overline{1}}$ and $n, n' \in N_{\overline{0}}\cup  N_{\overline{1}}$. For instance if $M$, $N$ are two graded ideals of a Lie superalgebra then the actions induced by the bracket are compatible.

\begin{lemma}\label{prop3.1}\cite[Proposition 3.4(ii)]{GKL2015}
Let $M$ and $N$ be two Lie superalgebras with actions on each other. Then there are actions of both $L$ and $M$ on $L\otimes M$ given by
$${}^{l'} (l\otimes m) = [l',l]\otimes m +(-1)^{|l||l'|}~  l\otimes{}^{}({}^{l'}m),$$
$$ {}^{m'} (l\otimes m) = {}^{}({}^{m'}l)\otimes m +(-1)^{|m||m'|}~  l\otimes [m',m],$$ for all $l,~l' \in L_{0}\cup L_{1} $ and $m,~m' \in M_0 \cup M_1$.
\end{lemma}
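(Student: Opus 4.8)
The plan is to read the two displayed formulas as \emph{candidate} derivations of $L\otimes M$ and to verify, in turn, that each is a well-defined linear map and that the assignment $l'\mapsto{}^{l'}(-)$ (respectively $m'\mapsto{}^{m'}(-)$) obeys the two axioms defining an action. The starting observation is structural: by relation (4) the bracket of two generators $l_1\otimes m_1$ and $l_2\otimes m_2$ is again a single generator (up to a Koszul sign), so $L\otimes M$ is spanned as a supermodule by the symbols $l\otimes m$, with bracket prescribed by (4). Consequently, to produce the action of a fixed homogeneous $l'\in L$ it suffices to define a degree-$|l'|$ linear map $D_{l'}$ on these symbols by ${}^{l'}(l\otimes m)=[l',l]\otimes m+(-1)^{|l||l'|}\,l\otimes{}^{l'}m$ and to check that it annihilates the linear relators coming from (1), (2) and (3); the map $D_{m'}$ attached to $m'\in M$ is treated identically.

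Relations (1) and (2) merely record $\mathbb{K}$-bilinearity, so $D_{l'}$ respects them at once. The only real content in the well-definedness step is relation (3). Applying $D_{l'}$ to both sides of $[l_1,l_2]\otimes n=(l_1\otimes{}^{l_2}n)-(-1)^{|l_1||l_2|}(l_2\otimes{}^{l_1}n)$ and expanding, the equality reduces to the graded Jacobi identity for $[l',[l_1,l_2]]$ in $L$ together with the first action axiom ${}^{[l',l_1]}n={}^{l'}({}^{l_1}n)-(-1)^{|l'||l_1|}{}^{l_1}({}^{l'}n)$ for the action of $L$ on $M$; the second half of (3), involving $m\otimes[n,n']$, is handled the same way using the action of $M$ on $L$. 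This yields a well-defined linear map $D_{l'}$ on $L\otimes M$.

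It remains to verify the two action axioms. Axiom (2), the graded Leibniz rule ${}^{l'}[x,y]=[{}^{l'}x,y]+(-1)^{|l'||x|}[x,{}^{l'}y]$, is checked on generators $x=l_1\otimes m_1$ and $y=l_2\otimes m_2$: both sides are expanded through relation (4) and the definition of $D_{l'}$, and the identity collapses onto the action and compatibility axioms of Section 3 once the signs are aligned. Axiom (1) asks that $l'\mapsto D_{l'}$ be a homomorphism into the graded derivations, i.e.\ $D_{[l',l'']}=D_{l'}D_{l''}-(-1)^{|l'||l''|}D_{l''}D_{l'}$; evaluating both sides on a symbol $l\otimes m$, the first tensor factor yields the graded Jacobi identity $[[l',l''],l]$ in $L$, the last factor yields the first action axiom for $L$ acting on $M$, and the mixed terms cancel pairwise. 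The action of $M$ on $L\otimes M$ is obtained by the same computation with the two tensor factors interchanged, now invoking the action axioms of $M$ on $L$.

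I expect the genuine obstacle to be not any single identity but the \emph{Koszul sign bookkeeping}: each time a bracket is opened or a tensor symbol is moved past another, a sign $(-1)^{|\cdot||\cdot|}$ appears, and relations (3), (4) and axiom (1) each demand several such moves, so the real risk is an accumulated sign slip rather than a conceptual gap. A cleaner, more machinery-heavy alternative that avoids the relation-by-relation check is to realise $L\otimes M$ as part of a crossed module over the semidirect product $L\rtimes M$ (the mutual actions being compatible, as recorded in Section 3): the two desired actions then arise by restriction from the canonical action of $L\rtimes M$, and well-definedness is inherited for free. I would present the direct verification as the main proof and mention the crossed-module picture as the structural reason the formulas are forced.
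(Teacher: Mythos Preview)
The paper does not supply its own proof of this lemma: it is quoted verbatim from \cite[Proposition 3.4(ii)]{GKL2015} and used as a black box, so there is no argument in the paper to compare against.

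Your outline is a correct direct verification, and it is the standard one. One organisational remark: rather than first arguing that $L\otimes M$ is spanned as a module by the symbols $l\otimes m$ and then checking the linear relations (1)--(3), it is cleaner to define $D_{l'}$ on the \emph{free} Lie superalgebra on the symbols by its values on generators and the graded Leibniz rule (so axiom (2) is built in), and then check that $D_{l'}$ preserves the ideal generated by all four relation families (1)--(4). This sidesteps the worry that relation (4), together with graded skew-symmetry and Jacobi in $L\otimes M$, may impose additional linear relations among the generators that are not visibly covered by (1)--(3) alone. Your computations for (3) and for axiom (1) go through unchanged under this reorganisation; the check for (4) is exactly the Leibniz computation you already sketch. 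The crossed-module alternative you mention is indeed how one would expect the original reference to package the result, and it is worth keeping as the conceptual explanation.
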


From the above proposition, $L$ acts on $L \otimes L$. On the other hand, the tensor product $L \otimes L$ acts on $L$ by ${}^tl = {}^{\lambda(t)}l$ for all $t \in  L \otimes L$ and $l \in L$ such that $\lambda : L \otimes L \longrightarrow L$ is a homomorphism given by $a \otimes b \longmapsto [a, b]$. These actions are
compatible, and thus we can construct the triple tensor product $\otimes^3L = (L \otimes L) \otimes L$.

\begin{lemma}\label{p00}\cite[Proposition 3.5]{GKL2015}
If the Lie superalgebras $M$ and $N$ act trivially on each other, then $M\otimes N$ is an abelian Lie superalgebra and there is an isomorphism of supermodules 
  	\[M\otimes N \cong M^{ab}\otimes _{\mathbb{K}}N^{ab},\]
where $M^{ab}=M/[M, M]$ and $N^{ab}=N/[N, N]$. 
\end{lemma}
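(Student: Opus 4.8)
The plan is to unwind the four defining relations of the non-abelian tensor product under the trivial-action hypothesis ${}^{m}n = 0$ and ${}^{n}m = 0$ for all homogeneous $m \in M$ and $n \in N$, and to show that what survives is exactly a presentation of the ordinary graded tensor product $M^{ab} \otimes_{\mathbb{K}} N^{ab}$ as a $\mathbb{K}$-supermodule.

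First I would prove abelianness. Since $M \otimes N$ is generated as a Lie superalgebra by the symbols $m \otimes n$ and its bracket is bilinear, it suffices to check that brackets of generators vanish. Defining relation (4) reads $[m \otimes n, m' \otimes n'] = -(-1)^{|m||n|}({}^{n}m \otimes {}^{m'}n')$, and the trivial action forces ${}^{n}m = 0$; moreover $0 \otimes x = 0$ by relation (1) applied with $\lambda = 0$, so every such bracket is zero. Hence $[M \otimes N, M \otimes N] = 0$, so $M \otimes N$ is abelian and coincides with its underlying $\mathbb{K}$-supermodule, which is all that the stated isomorphism concerns.

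Next I would identify that supermodule. Specializing relation (3) to the trivial action gives $[m, m'] \otimes n = (m \otimes {}^{m'}n) - (-1)^{|m||m'|}(m' \otimes {}^{m}n) = 0$, and symmetrically $m \otimes [n, n'] = 0$. Together with the bilinearity relations (1)--(2), this says precisely that $(m, n) \mapsto m \otimes n$ is $\mathbb{K}$-bilinear and annihilates $[M, M]$ in the first slot and $[N, N]$ in the second, so it descends to a $\mathbb{K}$-bilinear map on $M^{ab} \times N^{ab}$. By the universal property of the ordinary (graded) tensor product this induces a degree-preserving $\mathbb{K}$-linear map $\psi : M^{ab} \otimes_{\mathbb{K}} N^{ab} \to M \otimes N$ with $\bar{m} \otimes \bar{n} \mapsto m \otimes n$, the grading being respected since $|m \otimes n| = |m| + |n| = |\bar{m}| + |\bar{n}|$.

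For the inverse I would send the generator $m \otimes n \mapsto \bar{m} \otimes \bar{n}$, viewing $M^{ab} \otimes_{\mathbb{K}} N^{ab}$ as an abelian Lie superalgebra. The verification that this respects relations (1)--(4) is routine: (1)--(2) become bilinearity in the ordinary tensor product; (3) maps to $0 = 0$ because $\overline{[m, m']} = 0$ in $M^{ab}$ while the right-hand side vanishes by triviality; and (4) maps the left-hand bracket into the abelian target (hence to $0$) while its right-hand side is again $0$. This yields a well-defined homomorphism $\phi : M \otimes N \to M^{ab} \otimes_{\mathbb{K}} N^{ab}$, and since $\phi$ and $\psi$ are mutually inverse on generators they are mutually inverse isomorphisms of supermodules, proving the claim. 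The only delicate point is the well-definedness of $\psi$, where one must legitimately invoke the universal property of $\otimes_{\mathbb{K}}$ after verifying that the commutator subalgebras are annihilated in the respective slots; everything else reduces to bookkeeping with the defining relations.
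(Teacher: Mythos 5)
Your proof is correct and complete: trivial actions kill the right-hand sides of relations (3)--(4), giving both abelianness and the vanishing of $[M,M]\otimes n$ and $m\otimes[N,N]$, and the two maps you construct (one via the universal property of $\otimes_{\mathbb{K}}$, one by checking the defining relations into the abelian target) are mutually inverse on spanning sets. Note that the paper itself offers no proof of this lemma --- it is imported verbatim from \cite[Proposition 3.5]{GKL2015} --- and your argument is exactly the standard one used in that source, so there is nothing to reconcile.
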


\begin{lemma}\label{prop4}\cite[Proposition 3.8]{GKL2015}
Given a short exact sequence of Lie superalgebras
 $$(0, 0) \longrightarrow (K,L) \overset{(i, j)} \longrightarrow (M, N) \overset{(\phi, \psi)} \longrightarrow (P, Q) \longrightarrow (0, 0)$$ there is an exact sequence of Lie superalgebras 
 $$(K \otimes N) \rtimes (M \otimes L) \overset{\alpha} \longrightarrow M \otimes N \overset{\phi \otimes \psi} \longrightarrow P \otimes Q \longrightarrow 0.$$
 \end{lemma}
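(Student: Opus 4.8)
\emph{Proof proposal.} The plan is to prove the two statements that constitute the asserted exactness, namely surjectivity of $\phi \otimes \psi$ and the equality $\operatorname{Im}\alpha = \ker(\phi \otimes \psi)$, by adapting the standard right-exactness argument for the non-abelian tensor product of Lie algebras while carrying all $\mathbb{Z}_2$-signs. Here $\phi \otimes \psi$ is the morphism determined on generators by $m \otimes n \mapsto \phi(m) \otimes \psi(n)$, and $\alpha$ is the morphism out of the semidirect product whose restrictions send a generator $k \otimes n$ of $K \otimes N$ to $i(k) \otimes n$ and a generator $m \otimes l$ of $M \otimes L$ to $m \otimes j(l)$ inside $M \otimes N$, the two factors being compatible through the actions of Lemma~\ref{prop3.1}. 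Surjectivity of $\phi \otimes \psi$ is immediate, since $\phi$ and $\psi$ are surjective and the generators $p \otimes q = (\phi \otimes \psi)(m \otimes n)$ (for any homogeneous lifts $m, n$ of $p, q$) span $P \otimes Q$. The inclusion $\operatorname{Im}\alpha \subseteq \ker(\phi \otimes \psi)$ is verified on the two families of generators: $(\phi \otimes \psi)(i(k) \otimes n) = \phi(i(k)) \otimes \psi(n) = 0$ because $\phi \circ i = 0$, and symmetrically $(\phi \otimes \psi)(m \otimes j(l)) = 0$ because $\psi \circ j = 0$.

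For the reverse inclusion I would pass to the quotient $(M \otimes N)/\operatorname{Im}\alpha$. The first step is to check that $\operatorname{Im}\alpha$ is a graded ideal: applying the bracket relation~(4), one finds $[i(k) \otimes n, m' \otimes n'] = -(-1)^{|i(k)||n|}\bigl({}^{n}(i(k)) \otimes {}^{m'}n'\bigr)$, and since $\phi$ is equivariant with $\phi(i(k)) = 0$ one has $\phi({}^{n}(i(k))) = {}^{\psi(n)}\phi(i(k)) = 0$, so ${}^{n}(i(k)) \in i(K)$ and the bracket returns to $i(K) \otimes N \subseteq \operatorname{Im}\alpha$; the symmetric computation disposes of the factor $M \otimes L$. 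Consequently $\phi \otimes \psi$ induces an epimorphism $\overline{\phi \otimes \psi} \colon (M \otimes N)/\operatorname{Im}\alpha \longrightarrow P \otimes Q$, and it remains to produce a two-sided inverse, for then $\ker(\phi \otimes \psi) = \operatorname{Im}\alpha$.

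I would define the candidate inverse $\theta \colon P \otimes Q \to (M \otimes N)/\operatorname{Im}\alpha$ on generators by $\theta(p \otimes q) = \overline{m \otimes n}$ for any homogeneous lifts $m, n$ of $p, q$. Well-definedness is precisely where the ideal $\operatorname{Im}\alpha$ enters: two lifts of $p$ differ by some $i(k)$, and the additivity relation~(2) gives $m \otimes n - m' \otimes n = i(k) \otimes n = \alpha(k \otimes n) \in \operatorname{Im}\alpha$, so the class $\overline{m \otimes n}$ is independent of the chosen lifts in either slot.

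The step I expect to be the main obstacle is confirming that $\theta$ is a homomorphism of Lie superalgebras, that is, that it respects the four defining relations of $P \otimes Q$ with the correct signs. Relations~(1) and~(2) are linear and pass through any choice of lifts without difficulty, but relations~(3) and~(4) involve the mutual actions, and here I would use that the actions of $P$ and $Q$ are exactly those \emph{induced} from $M$ and $N$ along $\phi, \psi$, so that a lift of ${}^{\phi(m)}\psi(n)$ may be taken to be ${}^{m}n$ modulo the kernels. The delicate part is to carry the Koszul signs $(-1)^{|m||m'|}$ and $(-1)^{|n'|(|m|+|n|)}$ faithfully through these verifications and to confirm that every ambiguity arising from different lifts again lands in $\operatorname{Im}\alpha$. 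Once this is settled, $\theta$ and $\overline{\phi \otimes \psi}$ restrict to the identity on generators in both composites, so they are mutually inverse and $\ker(\phi \otimes \psi) = \operatorname{Im}\alpha$, which yields the claimed exactness.
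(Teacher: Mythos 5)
The paper gives no proof of this lemma at all---it is quoted directly from \cite[Proposition 3.8]{GKL2015}---so there is no internal argument to compare yours against. Your reconstruction (surjectivity of $\phi\otimes\psi$ on generators, $\Ima\alpha$ a graded ideal via equivariance of $\phi$ and $\psi$, and a two-sided inverse $\theta$ to the induced map on $(M\otimes N)/\Ima\alpha$) is the standard right-exactness argument for non-abelian tensor products and is essentially the proof in the cited source; it is structurally sound, including the less obvious point that brackets of the $M\otimes L$ generators also fall back into $i(K)\otimes N$ because ${}^{j(l)}m\in\ker\phi$. The single step you defer---compatibility of $\theta$ with the defining relations (3) and (4)---does go through for precisely the reason you identify: the actions of $P$ and $Q$ are induced along $\phi,\psi$, so equivariance gives $\psi({}^{m}n)={}^{\phi(m)}\psi(n)$, making ${}^{m}n$ a homogeneous lift of ${}^{p}q$; hence each defining relation of $P\otimes Q$ is the image of the corresponding relation of $M\otimes N$ with identical Koszul signs, and every ambiguity of lifts is absorbed into $\Ima\alpha$ by the well-definedness you already established.
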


Specifically given a Lie superalgebra $M$ and a graded ideal $K$ of $M$ there is an exact sequence 
 \begin{equation}\label{eq1}
 	(K \otimes M) \rtimes (M \otimes K) \longrightarrow M \otimes M \longrightarrow (M/K) \otimes (M/K) \longrightarrow 0.
 \end{equation}


\smallskip

Consider a Lie superalgebra $M = M_{\bar{0}}\oplus M_{\bar{1}}$ and let $M \square M$ be the submodule of $M \otimes M$ generated by elements
\begin{enumerate}
	\item $m \otimes m'+   (-1)^{|m||m'|}(m'\otimes m)$, 
	\item $m_{\overline{0}} \otimes m_{\overline{0}}$,
\end{enumerate} 
with $m,m' \in M_{\overline{0}} \cup M_{\overline{1}},~m_{0} \in M_{\overline{0}}$. Then the exterior product of $M$ and $M$ is denoted as $M \wedge M$ and is defined as the quotient Lie superalgebra \[M\wedge M= \frac{M\otimes M}{ M\square M}.\] 
For any $m \otimes m' \in M \otimes M$ we denote the coset $m \otimes m' +M \square M$ by $m \wedge m'$.\\
\smallskip

The exterior center of a Lie superalgebra $L$ is defined as follows:
\[Z^{\wedge}(L)=\{x \in L \mid x \wedge y =0, ~\forall~ y \in L \}.\]

\begin{lemma}\label{cor2}\cite[Corollary 5.10]{Padhandetec}
$ N\subseteq Z^{\wedge}(L)$ if and only if the natural map $L\wedge L \longrightarrow L/N\wedge L/N$ is a monomorphism.
\end{lemma}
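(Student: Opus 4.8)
The plan is to pin down the kernel of the natural epimorphism $\phi\colon L\wedge L \to (L/N)\wedge(L/N)$, $a\wedge b \mapsto \bar a\wedge\bar b$ (where $\bar a = a+N$), and then read off both implications. Let $\Delta$ denote the $\mathbb{K}$-submodule of $L\wedge L$ generated by all $n\wedge l$ with $n\in N$ and $l\in L$; by the squaring relation one has $l\wedge n = -(-1)^{|n||l|}\,n\wedge l$, so $\Delta$ equally contains every $l\wedge n$. First I would check that $\Delta$ is a graded ideal of $L\wedge L$. Using the induced bracket $[a\wedge b, c\wedge d] = -(-1)^{|a||b|}\,[b,a]\wedge[c,d]$ coming from relation (4) of the tensor product and the adjoint action of $L$ on itself, for $n\in N$ we obtain $[n\wedge l, c\wedge d] = -(-1)^{|n||l|}\,[l,n]\wedge[c,d]$; since $N$ is a graded ideal, $[l,n]\in N$, so the bracket returns to $\Delta$. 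Graded skew-symmetry then makes $\Delta$ two-sided, and the quotient $(L\wedge L)/\Delta$ is a Lie superalgebra.

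The central step is to prove $\ker\phi = \Delta$. The inclusion $\Delta\subseteq\ker\phi$ is immediate, since $\phi(n\wedge l)=\bar n\wedge\bar l = 0$ whenever $n\in N$; hence $\phi$ factors through an induced map $\bar\phi\colon (L\wedge L)/\Delta \to (L/N)\wedge(L/N)$. For the reverse inclusion I would exhibit an inverse to $\bar\phi$, defining $\psi\colon (L/N)\wedge(L/N)\to (L\wedge L)/\Delta$ on generators by $\psi(\bar a\wedge\bar b) = (a\wedge b)+\Delta$. Well-definedness is exactly where $\Delta$ does its work: replacing $a$ by $a+n$ alters $a\wedge b$ by $n\wedge b\in\Delta$ (and likewise in the second slot), so the value is independent of representatives. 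One then verifies that $\psi$ respects bilinearity, the squaring relations, and the bracket relations cutting out $(L/N)\wedge(L/N)$; each holds because the corresponding identity already holds in $L\wedge L$ and descends modulo $\Delta$. As $\bar\phi\circ\psi$ and $\psi\circ\bar\phi$ are the identity on generators, $\bar\phi$ is an isomorphism, whence $\ker\phi=\Delta$. This is precisely the exterior-product analog of the tensor-product exact sequence in Lemma~\ref{prop4}.

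With $\ker\phi=\Delta$ in hand the equivalence is formal. By definition of the exterior center, $\Delta=0$ holds if and only if $n\wedge l=0$ for all $n\in N$ and $l\in L$, which is exactly the condition $N\subseteq Z^{\wedge}(L)$. On the other hand, $\phi$ is a monomorphism if and only if $\ker\phi=0$, i.e.\ if and only if $\Delta=0$. Chaining these two equivalences yields that $\phi$ is a monomorphism precisely when $N\subseteq Z^{\wedge}(L)$, as claimed.

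I expect the main obstacle to be the bookkeeping in establishing $\ker\phi=\Delta$: one must confirm that $\Delta$ is genuinely a graded ideal so that $(L\wedge L)/\Delta$ carries a Lie superalgebra structure, and that $\psi$ is compatible with every defining relation of the exterior product, tracking the Koszul-type signs $(-1)^{|a||b|}$ in the squaring and bracket relations. Once this identification of the kernel is secured, everything else reduces to unwinding the definition of $Z^{\wedge}(L)$.
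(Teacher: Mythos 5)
Your proof is correct. Note, however, that the paper itself offers no proof of this statement: it is imported verbatim as a citation from \cite[Corollary 5.10]{Padhandetec}, where it is obtained as a corollary of the exact sequence $L\wedge N\longrightarrow L\wedge L\longrightarrow L/N\wedge L/N\longrightarrow 0$ (quoted in this paper as Lemma \ref{lemma3}(1)): injectivity of the second map is equivalent to the vanishing of the image of $L\wedge N$, i.e.\ to $n\wedge l=0$ for all $n\in N$, $l\in L$. What you do instead is re-derive exactly that kernel identification from scratch: you show directly that $\ker\phi=\Delta$, the submodule generated by the elements $n\wedge l$, by checking that $\Delta$ is a graded ideal and constructing the inverse $\psi$ to the induced map $\bar\phi$ on the quotient $(L\wedge L)/\Delta$. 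The two routes are mathematically the same identification of the kernel; yours is longer but self-contained, and it has one genuine advantage in generality: the exact sequence in Lemma \ref{lemma3} is stated only for \emph{central} graded ideals $N$, so the citation route handles the direction ``monomorphism $\Rightarrow N\subseteq Z^{\wedge}(L)$'' only under a centrality hypothesis (the other direction is unproblematic, since $N\subseteq Z^{\wedge}(L)$ forces $N\subseteq Z(L)$), whereas your argument establishes the equivalence for an arbitrary graded ideal $N$, which is the generality in which the lemma is stated here. The only caveats in your write-up are cosmetic: the relation $l\wedge n=-(-1)^{|n||l|}n\wedge l$ comes from the graded skew-symmetry generators of $M\square M$ (item (1) in the paper's definition), not from the ``squaring'' generators $m_{\bar 0}\otimes m_{\bar 0}$, and when checking that $\psi$ respects the bracket relations one should note that elements of $L\wedge L$ are module-spanned by the symbols $a\wedge b$ (a consequence of relation (4)), so verification on generators indeed suffices.
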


\begin{lemma}\label{lemma3}\cite[Lemma 5.9, Proposition 5.7]{Padhandetec}
	Let $N$ be a central graded ideal of Lie superalgebra $L$ then,
\begin{enumerate}
	\item 	$L\wedge N\longrightarrow L\wedge L \longrightarrow L/N\wedge L/N \longrightarrow 0$,
   \item  $ \mathcal{M}(L)  \overset{\sigma} \longrightarrow \mathcal{M}(L/N)\longrightarrow N\cap L^{2}\longrightarrow 0$,
\end{enumerate}
are exact sequences.

\end{lemma}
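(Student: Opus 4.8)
The plan is to treat the two sequences separately: I would obtain \emph{(2)} as a formal extraction from the homology exact sequence of the pair $(L,N)$ recorded in Section~3, and \emph{(1)} by descending from the right-exact tensor product sequence \eqref{eq1} to its exterior quotient.

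For \emph{(2)}, I would specialize the exact sequence of the pair $(L,N)$ displayed just before Lemma~\ref{c001} to the case where $N$ is central. Since $N\subseteq Z(L)$ we have $[L,N]=0$, so the term $I/[L,I]$ collapses to $N$ itself, and the relevant segment reads
$$\mathcal{M}(L)\xrightarrow{\ \sigma\ }\mathcal{M}(L/N)\xrightarrow{\ \delta\ }N\xrightarrow{\ \iota\ }L/L^2.$$
Here $\iota$ is induced by the inclusion $N\hookrightarrow L$ followed by the projection onto $L/L^2$, so $\ker\iota=N\cap L^2$. Exactness at $N$ gives $\Ima\delta=\ker\iota=N\cap L^2$, so corestricting $\delta$ to its image yields a surjection $\mathcal{M}(L/N)\to N\cap L^2$. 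Exactness at $\mathcal{M}(L/N)$ is inherited verbatim: one has $\ker\delta=\Ima\sigma$, and because $N\cap L^2\hookrightarrow N$ the kernel of the corestricted map is again $\ker\delta=\Ima\sigma$. This establishes \emph{(2)}; the only delicate point is the correct identification of the connecting homomorphism $\mathcal{M}(L/N)\to I/[L,I]$ together with the simplification $I/[L,I]=N$ forced by centrality.

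For \emph{(1)}, I would start from the right-exact sequence \eqref{eq1} applied with ideal $N$ inside $L$,
$$(N\otimes L)\rtimes(L\otimes N)\xrightarrow{\ \alpha\ }L\otimes L\xrightarrow{\ \pi\otimes\pi\ }(L/N)\otimes(L/N)\to 0,$$
where $\pi\colon L\to L/N$ is the projection, so that $\ker(\pi\otimes\pi)=\Ima\alpha$ is generated by the images of the homogeneous symbols $n\otimes l$ and $l\otimes n$ with $n\in N$, $l\in L$. Passing to the exterior quotient $L\wedge L=(L\otimes L)/(L\square L)$, the induced map $\bar\pi\colon L\wedge L\to (L/N)\wedge(L/N)$ is surjective because each generator $\bar l\wedge\bar l'$ lifts to $l\wedge l'$. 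To identify its kernel I would invoke the defining relation of $\square$, namely $m\otimes m'+(-1)^{|m||m'|}m'\otimes m\in L\square L$, which in $L\wedge L$ becomes $n\wedge l=-(-1)^{|n||l|}\,l\wedge n$; hence both families of generators of $\Ima\alpha$ descend into the image of the natural map $L\wedge N\to L\wedge L$ induced by $N\hookrightarrow L$. The reverse containment $\Ima(L\wedge N\to L\wedge L)\subseteq\ker\bar\pi$ is immediate since $\pi(n)=0$, and combining the two inclusions gives exactness at $L\wedge L$.

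I expect the genuine obstacle to lie in part \emph{(1)}: one must verify that quotienting the tensor sequence by the submodules $L\square L$ and $(L/N)\square(L/N)$ is compatible with $\alpha$ and $\pi\otimes\pi$, i.e. that $(\pi\otimes\pi)(L\square L)\subseteq(L/N)\square(L/N)$ and that $\Ima\alpha$ maps \emph{exactly} onto $\Ima(L\wedge N\to L\wedge L)$ with no spurious collapse. This reduces to checking that the defining generators of $\square$ are preserved by the projection and that the skew relation accounts for all of $\ker\bar\pi$; the sign bookkeeping coming from the $\mathbb{Z}_2$-grading in relation~(3) of the tensor product is where most of the care will be needed. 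Part \emph{(2)}, by contrast, is an essentially formal consequence of the already-established pair sequence.
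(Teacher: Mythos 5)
Your two arguments are essentially correct, but note that the paper itself offers no proof of this lemma at all: it is imported verbatim from the cited reference \cite[Lemma 5.9, Proposition 5.7]{Padhandetec}, so there is nothing in the paper to compare your route against. As a reconstruction, your proof is sound. Part \emph{(2)} is exactly the formal extraction you describe: specializing the pair sequence displayed before Lemma \ref{c001} to a central ideal $I=N$ gives $[L,N]=0$, hence $I/[L,I]=N$, and since the map $N\to L/L^2$ is the one induced by inclusion, exactness at $N$ identifies $\Ima\delta$ with $N\cap L^2$; corestriction preserves the kernel, so exactness at $\mathcal{M}(L/N)$ carries over. (An alternative, more self-contained route — likely the one in the cited source — is via free presentations and the Hopf-type formulas $\mathcal{M}(L)\cong (F'\cap R)/[F,R]$, $\mathcal{M}(L/N)\cong(F'\cap S)/[F,S]$, using the modular law to identify $N\cap L^2\cong (F'\cap S)/(F'\cap R)$; your route trades that computation for reliance on the quoted pair sequence, which is legitimate here.) In part \emph{(1)} your chase is correct, and the two points you flag as delicate are genuine but easy: surjectivity (not just containment) of $(\pi\otimes\pi)\colon L\square L\to (L/N)\square(L/N)$ on generators is what lets you write any element of $\ker\bar\pi$ as (image of $\ker(\pi\otimes\pi)$) modulo $L\square L$. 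Two small gaps you should still close: first, the natural map $L\wedge N\to L\wedge L$ must be checked to be well defined, i.e.\ the defining relations of $L\wedge N$ (the image of $\Gamma\bigl(N/[L,N]\bigr)=\Gamma(N)$ from Lemma \ref{p111}) land inside $L\square L$ — they do, since they are of the form $n\otimes n'+(-1)^{|n||n'|}n'\otimes n$ and $n_{\bar 0}\otimes n_{\bar 0}$; second, $\Ima\alpha$ is a priori the \emph{subalgebra} of $L\otimes L$ generated by the symbols $n\otimes l$, $l\otimes n$, not merely their span, so you need the bracket computation $[l\otimes n,\,l'\otimes n']=-(-1)^{|l||n|}\bigl([n,l]\otimes[l',n']\bigr)=0$ (using centrality of $N$) to see that its image in $L\wedge L$ is exactly the span of the elements $l\wedge n$, i.e.\ the image of $L\wedge N$. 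With those two remarks inserted, the proof is complete.
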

\begin{lemma}\label{lemma29}
Let $L$ be a Lie superalgebra. Then 
	\[0\longrightarrow  ~\mathcal{M}(L) \longrightarrow L\wedge L \longrightarrow L^{\prime}\longrightarrow 0 \]
is a central extension.
\end{lemma}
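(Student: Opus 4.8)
The plan is to realize the surjection in the sequence as the commutator homomorphism, and then to identify its kernel with the Schur multiplier by passing to a free presentation.

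First I would define $\kappa : L\wedge L \to L'$ on generators by $x\wedge y \mapsto [x,y]$. This is induced by the homomorphism $\lambda : L\otimes L \to L$, $a\otimes b\mapsto [a,b]$, already used in the excerpt to build $\otimes^3 L$, so I only have to check that $\lambda$ kills $L\square L$. On a generator $m\otimes m' + (-1)^{|m||m'|} m'\otimes m$ the graded skew-symmetry gives $[m,m'] + (-1)^{|m||m'|}[m',m] = 0$, and on $m_{\bar 0}\otimes m_{\bar 0}$ one gets $[m_{\bar 0}, m_{\bar 0}] = 0$ since $\operatorname{char}\mathbb{F}\neq 2$. Hence $\kappa$ is a well-defined Lie superalgebra homomorphism with image contained in $L' = [L,L]$, and it is surjective because $L'$ is spanned by the brackets $[x,y] = \kappa(x\wedge y)$. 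It then remains to identify $\ker\kappa$ with $\mathcal{M}(L)$ and to prove that the extension is central.

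For the kernel I would fix a free presentation $0\to R\to F\xrightarrow{\pi} L\to 0$ with $F$ free, and use the functorial identification $L\wedge L \cong F'/[F,R]$ (the Lie-superalgebra analogue of Ellis's description of the exterior square, obtained by lifting $x,y\in L$ to $\tilde x,\tilde y\in F$ and sending $x\wedge y\mapsto [\tilde x,\tilde y]+[F,R]$). Under this identification $\pi$ induces a surjection $F'\twoheadrightarrow L'$ with kernel $F'\cap R$, and since $[F,R]\subseteq F'\cap R$ the map $\kappa$ becomes the natural projection $F'/[F,R]\to F'/(F'\cap R)\cong L'$. Its kernel is therefore $(F'\cap R)/[F,R]$, which is precisely $\mathcal{M}(L)$ by the free-presentation formula $\mathcal{M}(L)\cong F'\cap R/[F,R]$ recalled in Section 2. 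This yields the exact sequence $0\to \mathcal{M}(L)\to L\wedge L\to L'\to 0$.

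Finally, for centrality I would argue intrinsically: $\lambda : L\otimes L\to L$, and hence $\kappa : L\wedge L\to L$, is a crossed module, since the excerpt records that the mutual actions are compatible. The crossed-module identity ${}^{\kappa(t)}s = [t,s]$ then shows that any $t\in\ker\kappa$ satisfies $[t,s] = {}^{0}s = 0$ for every $s\in L\wedge L$, so $\ker\kappa\subseteq Z(L\wedge L)$ and the extension is central; equivalently, this drops out of the model above because $[F'\cap R, F']\subseteq [R,F] = [F,R]$. The main obstacle is the comparison isomorphism $L\wedge L\cong F'/[F,R]$: matching the generators-and-relations presentation of $\wedge$ against the free presentation requires verifying that $x\wedge y\mapsto [\tilde x,\tilde y]+[F,R]$ is independent of the chosen lifts and respects every defining relation of the exterior product, in particular the graded bracket relation, and this is where the super-signs must be tracked carefully.
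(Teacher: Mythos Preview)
The paper states this lemma without proof, treating it as a known fact (it sits among other quoted results in Section~3 and is used freely thereafter), so there is no argument in the paper to compare against. Your outline is the standard proof: the commutator map $L\wedge L\to L'$ is well defined because $\lambda$ annihilates $L\square L$, the free-presentation model $L\wedge L\cong F'/[F,R]$ identifies the kernel with $F'\cap R/[F,R]=\mathcal{M}(L)$, and centrality follows either from the crossed-module identity for $\kappa$ or from $[F'\cap R,F']\subseteq[F,R]$. The only genuine work, as you correctly flag, is the isomorphism $L\wedge L\cong F'/[F,R]$ in the super setting; this is established in the literature (e.g.\ Garc\'{\i}a-Mart\'{\i}nez--Khmaladze--Ladra and the references around Lemma~\ref{prop4}), so citing it rather than reproving it from the generators-and-relations description would be in keeping with how the paper handles this lemma.
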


\begin{lemma}\label{corr3.5}
Let $L$ be a nilpotent Lie superalgebra of class two. Then $L\wedge L$ is an abelian. Moreover $L\wedge L\cong \mathcal{M}(L)\oplus L^2$.
\end{lemma}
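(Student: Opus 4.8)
The plan is to combine the defining bracket relation of the tensor (hence exterior) product with the class-two hypothesis, and then to split the central extension supplied by Lemma \ref{lemma29}.

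First I would show that $L \wedge L$ is abelian. In $L \otimes L$, and hence in its quotient $L \wedge L$, the bracket of two generators is governed by the fourth defining relation of the tensor product, which under the natural self-action ${}^{x}y = [x,y]$ becomes
\[
[m \wedge n,\, m' \wedge n'] = -(-1)^{|m||n|}\,[n,m] \wedge [m',n'].
\]
Since $L$ has nilpotency class two, $L^2 \subseteq Z(L)$, so both $[n,m]$ and $[m',n']$ lie in $L^2 \subseteq Z(L)$. The key observation is that $a \wedge b = 0$ whenever $a \in L^2$ and $b \in Z(L)$: writing $a = [p,q]$ and applying the third defining relation gives
\[
[p,q] \wedge b = \big(p \wedge [q,b]\big) - (-1)^{|p||q|}\big(q \wedge [p,b]\big) = 0,
\]
because $[q,b] = [p,b] = 0$ by centrality of $b$. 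Taking $a = [n,m]$ and $b = [m',n']$ annihilates the bracket of every pair of generators, and as these generate $L \wedge L$ we conclude that $L \wedge L$ is abelian.

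For the decomposition I would invoke the central extension of Lemma \ref{lemma29},
\[
0 \longrightarrow \mathcal{M}(L) \longrightarrow L \wedge L \longrightarrow L^2 \longrightarrow 0.
\]
We have just seen that $L \wedge L$ is abelian, and $L^2$ is abelian as well since $[L^2, L^2] \subseteq [L^2, L] = L^3 = 0$. Hence any graded linear section $s : L^2 \to L \wedge L$ of even degree is automatically a Lie superalgebra homomorphism: both $[s(x), s(y)]$ and $s([x,y])$ vanish, the former because $L \wedge L$ is abelian and the latter because $L^2$ is. The section therefore splits the extension, $s(L^2)$ is a subalgebra complementary to $\mathcal{M}(L)$, and
\[
L \wedge L \cong \mathcal{M}(L) \oplus L^2.
\]

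The routine parts are the sign bookkeeping in the two tensor relations and checking that the relevant actions are those induced by the bracket. The one substantive step is the vanishing $a \wedge b = 0$ for $a \in L^2$ and $b \in Z(L)$; once it is established, both the commutativity of $L \wedge L$ and the splitting follow at once, the latter being an instance of the general fact that a central extension whose total space and quotient are both abelian is necessarily split.
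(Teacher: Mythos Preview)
Your argument is correct and follows essentially the same route as the paper: both use the bracket identity $[m\wedge n,m'\wedge n']=-(-1)^{|m||n|}[n,m]\wedge[m',n']$ and then expand the commutator factor via the third defining relation to obtain terms of the form $l\wedge[l',[l_3,l_4]]$, which vanish in class two. Your treatment of the splitting is more explicit than the paper's (which simply cites Lemma~\ref{lemma29} and the abelianness of $L\wedge L$), but the underlying idea is identical.
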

\begin{proof}
Since $[l_1 \wedge l_2, l_3 \wedge l_4] =-(-1)^{|l_1||l_2|} [l_2, l_1] \wedge [l_3, l_4] = -(-1)^{|l_1||l_2|} ((l_2 \wedge [l_1, [l_3, l_4]]) -(-1)^{|l_1||l_2|} (l_1 \wedge [l_2, [l_3, l_4]])) =(l_1 \wedge [l_2, [l_3, l_4]])-(-1)^{|l_1||l_2|} (l_2 \wedge [l_1, [l_3, l_4]])= 0,$ for all $l_1,~ l_2,~ l_3,~ l_4 \in L$. Thus $(L\wedge L)^2 = 0$, and $L\wedge L$ is abelian and by invoking Lemma \ref{lemma29}, we have $L\wedge L\cong \mathcal{M}(L)\oplus L^2$.
\end{proof}

\begin{lemma}\label{th4.4}\cite[Proposition 3.4]{SN2018b}\cite[Theorem 6.9]{Padhandetec}
Let $L$ be a nilpotent Lie superalgebra of dimension $(k \mid l)$ with $\dim L'=(r \mid s)$, where $r+s=1$. If $r=1, s=0$, then $L \cong H(m,n)\oplus A(k-2m-1 \mid l-n)$ for $m+n\geq 1$. If $r=0, s=1$, then $L \cong H_{m} \oplus A(k-m \mid l-m-1)$. Moreover, $L$ is capable if and only if either $L \cong H(1 , 0)\oplus A(k-3 \mid l)$ or $L \cong H_{1}\oplus A(k-1 \mid l-2)$.
\end{lemma}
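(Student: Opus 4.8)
The plan is to establish the structural classification first and then extract capability from the behaviour of the exterior square; throughout write $L'=\langle z\rangle$ with $z$ homogeneous. \emph{Reduction to class two.} Since $L$ is nilpotent and non-abelian, $L^{2}=L'\neq0$, whereas $L^{3}=[L',L]$ is a subspace of the one-dimensional space $L'$ that must be proper (otherwise $L^{k}=L'\neq0$ for all $k$, contradicting nilpotency); hence $L^{3}=0$, so $L'=\langle z\rangle\subseteq Z(L)$ and $L$ has nilpotency class two. \emph{Normal form.} The bracket then factors through a homogeneous bilinear form $\beta\colon L/L'\times L/L'\to\langle z\rangle\cong\mathbb{F}$, and its support is governed by the parity of $z$. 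If $r=1,s=0$ then $z\in L_{\bar{0}}$ and, as every bracket lands in $\langle z\rangle\subseteq L_{\bar{0}}$, the $\mathbb{Z}_{2}$-grading forces $[L_{\bar{0}},L_{\bar{1}}]\subseteq L_{\bar{1}}\cap\langle z\rangle=0$; thus $\beta$ restricts to an alternating form on $L_{\bar{0}}$ and a symmetric form on $L_{\bar{1}}$. A symplectic basis for the first yields the relations $[x_{i},x_{m+i}]=z$ and a diagonalisation of the second yields $[y_{j},y_{j}]=z$ after rescaling the $y_{j}$, while the two radicals are central and span a complementary abelian ideal; counting the $2m+1$ even and $n$ odd coordinates absorbed by the Heisenberg part gives $L\cong H(m,n)\oplus A(k-2m-1\mid l-n)$. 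If instead $r=0,s=1$ then $z\in L_{\bar{1}}$, the grading now annihilates $[L_{\bar{0}},L_{\bar{0}}]$ and $[L_{\bar{1}},L_{\bar{1}}]$, and $\beta$ survives only as a pairing $L_{\bar{0}}\times L_{\bar{1}}\to\langle z\rangle$; its rank-$m$ normal form gives $[x_{j},y_{j}]=z$ and the left/right radicals give the complement, whence $L\cong H_{m}\oplus A(k-m\mid l-m-1)$.

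For capability I would use the criterion that $L$ is capable if and only if its exterior centre $Z^{\wedge}(L)$ vanishes. Since $x\in Z^{\wedge}(L)$ implies $[x,y]=0$ for all $y$, we have $Z^{\wedge}(L)\subseteq Z(L)$, and writing $L=H\oplus A$ with $H$ the Heisenberg factor gives $Z(L)=\langle z\rangle\oplus A$. The first key step is to show that $Z^{\wedge}(L)$ cannot protrude into $A$: by Lemma~\ref{corr3.5} and the direct-sum formula Lemma~\ref{th3.7}, $L\wedge L\cong\mathcal{M}(H)\oplus\mathcal{M}(A)\oplus(H/H'\otimes A)\oplus L^{2}$, and for $0\neq a\in A$ and any homogeneous generator $u\notin H'$ of $H$ the element $a\wedge u$ is the nonzero image of $\bar{u}\otimes a$ in the cross term $H/H'\otimes A$. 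A matching-of-summands argument (the contribution $z\wedge u$ lies in $\mathcal{M}(H)$, a different summand) then forces the $A$-component of any element of $Z^{\wedge}(L)$ to vanish, so $Z^{\wedge}(L)\subseteq\langle z\rangle$ and capability is equivalent to $z\notin Z^{\wedge}(L)$.

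It then remains to test the single element $z$. By Lemma~\ref{cor2} with $N=\langle z\rangle$, $z\in Z^{\wedge}(L)$ exactly when $L\wedge L\to(L/\langle z\rangle)\wedge(L/\langle z\rangle)$ is injective, and by the exact sequence of Lemma~\ref{lemma3}(1) its kernel is the image of $L\wedge\langle z\rangle$; hence $z\notin Z^{\wedge}(L)$ if and only if $\dim(L\wedge L)>\dim\big((L/L')\wedge(L/L')\big)$. The right side equals $\dim\mathcal{M}(L/L')$ since $L/L'$ is abelian, and $\dim(L\wedge L)=\dim\mathcal{M}(L)+1$ by Lemma~\ref{corr3.5}. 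Expanding both multipliers with Lemma~\ref{th3.7} for $L=H\oplus A$ and $L/L'=(H/H')\oplus A$, the terms $\mathcal{M}(A)$ and $H/H'\otimes A$ appear on both sides and cancel, leaving the $A$-independent quantity $\delta(H)=\dim\mathcal{M}(H)+1-\dim\mathcal{M}(H/H')$; a short computation from Lemmas~\ref{th3.4} and~\ref{th3.6} (with Lemma~\ref{th3.3} for $H/H'$) shows $\delta(H)>0$ precisely when $H=H(1,0)$ or $H=H_{1}$, which pins down the two capable families.

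The main obstacle is the capability half rather than the classification. Two points need care: establishing that $Z^{\wedge}(L)$ is confined to $\langle z\rangle$ (so that checking one element suffices, and so that arbitrarily large abelian summands cannot accidentally destroy capability), and verifying the exact cancellation that renders $\delta(H)$ independent of the abelian part, leaving a quantity whose sign flips precisely at $H(1,0)$ and $H_{1}$. The structural part is the routine super-analogue of classifying class-two nilpotent algebras with a one-dimensional centre and requires only careful dimension bookkeeping.
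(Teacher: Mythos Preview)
The paper does not prove this lemma; it is quoted verbatim from \cite[Proposition 3.4]{SN2018b} and \cite[Theorem 6.9]{Padhandetec} and used as a black box, so there is no in-paper argument to compare your proposal against.

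That said, your reconstruction is essentially correct and follows the expected line from the cited sources. The reduction to class two and the normal-form analysis via the parity of $z$ are clean and accurate. For capability, your reduction to the single quantity $\delta(H)=\dim\mathcal{M}(H)+1-\dim\mathcal{M}(H/H')$ is a neat device, and one can check from Lemmas~\ref{th3.3}, \ref{th3.4}, \ref{th3.6} that $\delta(H(1,0))=2$, $\delta(H_{1})=1$, while $\delta(H(0,1))=\delta(H(m,n))_{m+n\geq2}=\delta(H_{m})_{m\geq2}=0$, exactly as required.

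The only step that deserves a firmer footing is the ``matching-of-summands'' argument showing $Z^{\wedge}(L)\subseteq\langle z\rangle$. As written you are implicitly using not just the abstract isomorphism $L\wedge L\cong\mathcal{M}(H)\oplus\mathcal{M}(A)\oplus(H/H'\otimes A)\oplus L^{2}$ but its naturality, so that $z\wedge u$ genuinely lands in the $H\wedge H$ piece and $a\wedge u$ in the cross piece. This is true---for a direct product $L=H\oplus A$ one has a natural splitting $L\wedge L\cong(H\wedge H)\oplus(H^{ab}\otimes A)\oplus(A\wedge A)$---but you should either state and invoke this explicit decomposition, or bypass it: since $z=[h_{1},h_{2}]\in H'$ and $[H,A]=0$, one computes directly that $z\wedge a'=0$ for every $a'\in A$, and then varying $u$ over $H\setminus H'$ and $a'$ over $A$ forces the $A$-component of any exterior-central element to vanish. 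With that tightened, the argument is complete.
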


\begin{lemma}\label{cor3}\cite[Corollary 5.12, Lemma 6.2]{Padhandetec}
	\begin{enumerate}
    	\item 	$ \mathcal{M}(A(m \mid n)) \cong A(m \mid n)\wedge A(m \mid n)$.
		\item $H(1 , 0)\wedge H(1 , 0) \cong A(3 \mid 0)$
		\item $H(0 , 1)\wedge H(0 , 1)\cong A(1 \mid 0)$
		\item $H(m , n)\wedge H(m , n)\cong A(r \mid s)$, where $r=(2m^{2}-m)+\dfrac{n(n+1)}{2}, s= 2mn$ and $m+n\geq 2$.
		\item  $H_{1}\wedge H_{1} \cong A(1 \mid 2)$
		\item  $H_{m}\wedge H_{m}\cong A(m^{2} \mid m^{2})$ for $m \geq 2$.
	\end{enumerate}
\end{lemma}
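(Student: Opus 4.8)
The plan is to turn each isomorphism into a purely dimensional statement by means of Lemma \ref{corr3.5}. Every algebra in the list is either abelian or one of the Heisenberg families $H(m,n)$, $H_m$, and each of these is nilpotent of class at most two. Hence Lemma \ref{corr3.5} applies and tells me two things at once: first, that $L \wedge L$ is abelian, and second, that $L \wedge L \cong \mathcal{M}(L) \oplus L^2$ as (abelian) Lie superalgebras. Since an abelian Lie superalgebra is determined up to isomorphism by its superdimension $(r \mid s)$, it suffices to compute $\dim \mathcal{M}(L) + \dim L^2$ in each case and match it against the claimed $A(r \mid s)$.

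Part (1) is the degenerate case: when $L = A(m \mid n)$ is abelian we have $L^2 = 0$, so Lemma \ref{corr3.5} (equivalently, the central extension of Lemma \ref{lemma29} with $L' = 0$) gives $A(m \mid n) \wedge A(m \mid n) \cong \mathcal{M}(A(m \mid n))$ with no further computation. For the remaining parts I would first record that in a Heisenberg Lie superalgebra $H$ one has $H^2 = Z(H)$ of dimension one, and then track the parity of the central generator: for the even-centered algebras $H(m,n)$ the generator $z$ lies in $H_{\bar 0}$, so $\dim H^2 = (1 \mid 0)$, whereas for the odd-centered algebras $H_m$ the generator $z$ lies in $H_{\bar 1}$, so $\dim H^2 = (0 \mid 1)$.

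With this in hand the six cases reduce to substitution. Feeding the Schur multiplier dimensions of Lemma \ref{th3.4} into $\dim \mathcal{M}(H) + \dim H^2$ handles parts (2), (3) and (4): for example in part (4), for $m+n \geq 2$, I obtain $(2m^2 - m + n(n+1)/2 - 1 \mid 2mn) + (1 \mid 0) = (r \mid s)$ with $r = 2m^2 - m + n(n+1)/2$ and $s = 2mn$, while the small cases $H(1,0)$ and $H(0,1)$ produce $(2 \mid 0)+(1 \mid 0) = (3 \mid 0)$ and $(0 \mid 0)+(1 \mid 0) = (1 \mid 0)$ respectively. Likewise, feeding Lemma \ref{th3.6} into the same formula handles parts (5) and (6): $H_1$ gives $(1 \mid 1) + (0 \mid 1) = (1 \mid 2)$, and $H_m$ with $m \geq 2$ gives $(m^2 \mid m^2 - 1) + (0 \mid 1) = (m^2 \mid m^2)$. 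Since in every case $H \wedge H$ is abelian of the computed superdimension, the stated isomorphism with $A(r \mid s)$ follows. I do not expect any real obstacle beyond the careful bookkeeping of the even/odd coordinate into which the one-dimensional $H^2$ contributes; the structural input, namely the abelianness of $L \wedge L$ together with the direct-sum splitting, is supplied entirely by Lemma \ref{corr3.5}.
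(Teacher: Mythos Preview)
Your argument is correct. Note that the paper does not supply its own proof of this lemma; it is quoted from \cite{Padhandetec}. Your route via Lemma~\ref{corr3.5}, namely that for a nilpotent Lie superalgebra of class at most two one has $L\wedge L$ abelian with $L\wedge L\cong \mathcal{M}(L)\oplus L^2$, together with the tabulated multiplier dimensions in Lemmas~\ref{th3.3}, \ref{th3.4} and \ref{th3.6}, is exactly the intended mechanism and the arithmetic checks out in every case. The only point worth making explicit is that an abelian Lie superalgebra over a field is determined up to isomorphism by its superdimension, which you state and use; with that, each item is indeed a pure bookkeeping step.
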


\begin{lemma}\label{prop444}\cite[Proposition 4.4]{hasanpadprad}
Let $H(m,n)$ be a Heisenberg Lie superalgebra with an even center, then
	\[H(m,n)\otimes H(m,n)\cong H(m,n)/H^2(m,n) \otimes H(m,n)/H^2(m,n),\]
when $m+n\geq 2.$ Moreover, $H(1, 0) \otimes H(1, 0) \cong A (6 \mid 0)$ and $H(0, 1) \otimes H(0, 1) \cong A(1 \mid 0)$.
\end{lemma}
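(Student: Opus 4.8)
The plan is to read off $H(m,n)\otimes H(m,n)$ from the exact sequence \eqref{eq1} applied to the central ideal $K=H^{2}=Z(H)$, using the explicit presentation of $H(m,n)$ in Lemma \ref{th3.4}. Writing $H$ for $H(m,n)$ and taking $K=H^{2}=\langle z\rangle$ in \eqref{eq1} gives an exact sequence $(H^{2}\otimes H)\rtimes(H\otimes H^{2})\xrightarrow{\;\alpha\;} H\otimes H \longrightarrow (H/H^{2})\otimes(H/H^{2})\longrightarrow 0$, so the entire problem reduces to identifying $\Ima\alpha$, which is the submodule of $H\otimes H$ generated by the symbols $z\otimes h$ and $h\otimes z$ as $h$ runs over a homogeneous basis of $H$.

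For $m+n\geq 2$ I claim $\Ima\alpha=0$. The key observation is that, in the presentation of Lemma \ref{th3.4}, each basis vector of $H$ brackets nontrivially with the constituents of at most one of the defining relations $[x_{i},x_{m+i}]=z$, $[y_{j},y_{j}]=z$. Since $m+n\geq 2$ there are at least two such relations, so for every basis vector $h$ one may choose one, say $z=[u,v]$ (or $z=[w,w]$), all of whose constituents bracket trivially with $h$. The defining relation (3) of the tensor product then expresses $z\otimes h$ as $(u\otimes{}^{v}h)-(-1)^{|u||v|}(v\otimes{}^{u}h)$ in the even-pair case and as $2\,(w\otimes{}^{w}h)$ in the odd case, and this vanishes because $[u,h]=[v,h]=0$ (respectively $[w,h]=0$) for the chosen relation; the mirror identity kills $h\otimes z$. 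Hence $\Ima\alpha=0$, the right-hand map of \eqref{eq1} is an isomorphism, and since $H/H^{2}$ is abelian Lemma \ref{p00} rewrites the target as the asserted module.

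The rank-one cases $m+n=1$ are where $\alpha$ may fail to vanish, and I would treat them by direct computation. For $H(1,0)=\langle x_{1},x_{2},z\mid[x_{1},x_{2}]=z\rangle$, relation (3) forces $z\otimes z=0$ and $z\otimes x_{i}=-(x_{i}\otimes z)$, and relation (4) forces $H\otimes H$ to be abelian; thus $H\otimes H$ is spanned by the four symbols $x_{i}\otimes x_{j}$ together with $x_{1}\otimes z,\,x_{2}\otimes z$, giving $\dim(H\otimes H)\leq(6\mid 0)$. For the reverse inequality the two symbols $x_{1}\otimes z,\,x_{2}\otimes z$ must be shown independent and nonzero; granting this, $\Ima\alpha$ is $(2\mid 0)$-dimensional and, with $(H/H^{2})\otimes(H/H^{2})\cong A(4\mid 0)$, the sequence \eqref{eq1} yields $H\otimes H\cong A(6\mid 0)$. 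For $H(0,1)=\langle z,y_{1}\mid[y_{1},y_{1}]=z\rangle$ only the single relation $z=[y_{1},y_{1}]$ is available; here relation (3) produces the coupled identities $z\otimes y_{1}=2\,(y_{1}\otimes z)$ and $y_{1}\otimes z=2\,(z\otimes y_{1})$, forcing $3\,(z\otimes y_{1})=0$. This is exactly where the standing hypothesis $\mathrm{char}\,\mathbb{F}\neq 3$ is indispensable: it gives $z\otimes y_{1}=y_{1}\otimes z=0$, so $H\otimes H$ is spanned by $y_{1}\otimes y_{1}$ alone and $\dim(H\otimes H)\leq(1\mid 0)$; since $H\otimes H$ surjects onto $H\wedge H\cong A(1\mid 0)$ by Lemma \ref{cor3}(3), equality holds and $H\otimes H\cong A(1\mid 0)$. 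The principal obstacle is precisely this rank-one analysis: with a single generator of $H^{2}$ the clean cancellation of the generic case is unavailable, so $\Ima\alpha$ must be pinned down by hand — establishing the nonvanishing and independence of $x_{1}\otimes z,\,x_{2}\otimes z$ for $H(1,0)$ (the lower bound, e.g. via an explicit model of the non-abelian tensor square of the $3$-dimensional Heisenberg Lie algebra) and, for $H(0,1)$, resolving the coupled identities, whose collapse genuinely rests on excluding characteristic $3$.
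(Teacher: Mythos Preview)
The paper does not itself prove this lemma; it is quoted from \cite[Proposition~4.4]{hasanpadprad} without an accompanying argument, so there is no in-paper proof to compare your proposal against.

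On its own merits your plan is sound. The generic case $m+n\geq 2$ is handled correctly: the multiplicity of relations producing $z$ lets you annihilate each $z\otimes h$ and $h\otimes z$ via tensor relation~(3), collapsing the exact sequence~\eqref{eq1} to the asserted isomorphism. Your treatment of $H(0,1)$ is also complete, and the role of $\operatorname{char}\mathbb{F}\neq 3$ is well observed.

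The one gap you flag --- the lower bound $\dim\bigl(H(1,0)\otimes H(1,0)\bigr)\geq(6\mid 0)$, i.e.\ the independence of $x_{1}\otimes z$ and $x_{2}\otimes z$ --- can be closed without building an external model. Use the paper's own Lemma~\ref{lem3.12}: $H\otimes H\cong(H\wedge H)\oplus(H\,\square\,H)$. Lemma~\ref{cor3}(2) supplies $H\wedge H\cong A(3\mid 0)$, and Lemma~\ref{cor4.3} gives $H\,\square\,H\cong\Gamma(H/H^{2})=\Gamma(A(2\mid 0))$, which is three-dimensional (generators $\gamma(x_{1}),\gamma(x_{2}),x_{1}\otimes x_{2}$). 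Hence $\dim(H\otimes H)=(6\mid 0)$, and since you have already argued that $H\otimes H$ is abelian, $H(1,0)\otimes H(1,0)\cong A(6\mid 0)$ follows.
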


\begin{lemma}\label{prop45}\cite[Proposition 4.5]{hasanpadprad}
Let $H_m$ be a Heisenberg Lie superalgebra with odd center, then \[H_m\otimes H_m\cong H_m/H^2_m \otimes H_m/H^2_m,\] when $m\geq 2.$ Moreover, $H_1 \otimes H_1 \cong A (2 \mid 3)$.
\end{lemma}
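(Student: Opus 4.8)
The plan is to apply the exact sequence \eqref{eq1} with $M=H_m$ and the graded central ideal $K=H_m^{2}=Z(H_m)=\langle z\rangle$. Because $K$ is central, the two actions induced by the bracket are trivial, so Lemma \ref{p00} identifies $K\otimes M$ and $M\otimes K$ with ordinary tensor products of abelianizations; concretely, the image of the left-hand map $\alpha$ in \eqref{eq1} is spanned by the symbols $z\otimes u$ and $u\otimes z$, with $u$ running over the homogeneous basis $\{x_i,y_i,z\}$ of $H_m$. Thus the isomorphism $H_m\otimes H_m\cong H_m/H_m^{2}\otimes H_m/H_m^{2}$ for $m\ge 2$ is exactly the assertion that $\alpha$ is the zero map, and I would prove it by killing every such symbol.

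For $m\ge 2$ this is an index trick. Writing $z=[x_j,y_j]$ and using relation (3) of the tensor product, $z\otimes u=(x_j\otimes[y_j,u])-(y_j\otimes[x_j,u])$; for any basis vector $u$ one chooses $j$ different from the index of $u$, whence $[x_j,u]=[y_j,u]=0$ and $z\otimes u=0$. The mirror form of relation (3) gives $u\otimes z=0$ in the same way (for $u=z$ any $j$ works). Hence $\alpha=0$ and \eqref{eq1} yields the stated isomorphism.

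For $m=1$ this trick is unavailable, and the behaviour of $\alpha$ is the crux. Here I would compute $H_1\otimes H_1$ by hand for $H_1=\langle x,y,z\mid[x,y]=z\rangle$ with $x$ even and $y,z$ odd. First, relation (4) shows $H_1\otimes H_1$ is abelian, since every bracket of generators is a multiple of $z\otimes z=0$. Relation (3) applied to $z=[x,y]$ removes $z\otimes x=-(x\otimes z)$, $z\otimes y=-(y\otimes z)$ and $z\otimes z=0$, leaving the six symbols $x\otimes x,\,y\otimes y,\,x\otimes y,\,y\otimes x,\,x\otimes z,\,y\otimes z$. The decisive point is the odd self-bracket: relation (3) gives $[y,y]\otimes x=2\,(y\otimes[y,x])=-2\,(y\otimes z)$, and since $[y,y]=0$ and $\operatorname{char}\mathbb{F}\neq 2$ this forces $y\otimes z=0$; the corresponding computation with the even generator $x$ returns only $0=0$, so $x\otimes z$ survives. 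Consequently $H_1\otimes H_1$ is abelian and spanned by the five symbols $x\otimes x,\,y\otimes y$ (even) and $x\otimes y,\,y\otimes x,\,x\otimes z$ (odd), giving the upper bound $\dim(H_1\otimes H_1)\le(2\mid 3)$.

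The main obstacle is to certify that this bound is attained, equivalently that $x\otimes z\neq 0$, so that the image of $\alpha$ is $(0\mid 1)$ rather than $0$. I would clinch this by pushing forward to the exterior square. The odd part of $H_1\wedge H_1\cong A(1\mid 2)$ (Lemma \ref{cor3}(5)) is two-dimensional and, since $y\wedge z=0$, is spanned by $x\wedge y$ and $x\wedge z$; these two must therefore be independent, so $x\wedge z\neq 0$. Under the canonical surjection $H_1\otimes H_1\to H_1\wedge H_1$ the symbol $x\otimes z$ maps to $x\wedge z\neq 0$, whence $x\otimes z\neq 0$. Combining the upper bound with the exact sequence \eqref{eq1}---whose cokernel $(H_1/H_1^{2})\otimes(H_1/H_1^{2})\cong A(2\mid 2)$ has dimension $(2\mid 2)$ and whose kernel is now the line $\langle x\otimes z\rangle$ of dimension $(0\mid 1)$---pins down $\dim(H_1\otimes H_1)=(2\mid 3)$. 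As $H_1\otimes H_1$ is abelian, this gives $H_1\otimes H_1\cong A(2\mid 3)$.
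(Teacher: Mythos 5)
Your proposal is correct, but there is nothing in this paper to compare it against: the lemma is imported verbatim from \cite[Proposition 4.5]{hasanpadprad} and is stated here without proof, so your argument can only be measured against the paper's general machinery. Measured that way, it holds up. For $m\geq 2$, your use of the sequence \eqref{eq1} with $K=H_m^2=\langle z\rangle$ is sound: since the actions between $K$ and $H_m$ are trivial, Lemma~\ref{p00} shows both $K\otimes H_m$ and $H_m\otimes K$ are abelian and linearly spanned by symbols $z\otimes u$, $u\otimes z$, so the image of $\alpha$ is exactly their span; the index trick $z=[x_j,y_j]$ with $j$ avoiding the index of $u$ kills every such symbol, hence the kernel of $H_m\otimes H_m\to H_m/H_m^2\otimes H_m/H_m^2$ vanishes. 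For $m=1$, the two decisive computations are also right: $[y,y]\otimes x=2\,(y\otimes[y,x])=-2\,(y\otimes z)$ together with $[y,y]=0$ and $\operatorname{char}\mathbb{F}\neq 2$ forces $y\otimes z=0$ (this is precisely where the odd-center case differs from the even-center one), while the survival of $x\otimes z$ is legitimately certified by pushing to $H_1\wedge H_1\cong A(1\mid 2)$ from Lemma~\ref{cor3}, whose odd part would otherwise be at most one-dimensional. The count $(2\mid 2)$ for the cokernel of \eqref{eq1} plus $(0\mid 1)$ for $\langle x\otimes z\rangle$, together with abelianness (every bracket of symbols is a multiple of $z\otimes z=0$), yields $A(2\mid 3)$.

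One remark on an alternative that the paper's own toolkit makes available: instead of detecting $x\otimes z\neq 0$ by hand, you could combine Lemma~\ref{lem3.12} ($L\otimes L\cong L\wedge L\oplus L\square L$) with Lemma~\ref{cor4.3} ($L\square L\cong\Gamma(L/L^2)$); since $\Gamma(A(1\mid 1))\cong A(1\mid 1)$ and $H_1\wedge H_1\cong A(1\mid 2)$, this gives $H_1\otimes H_1\cong A(2\mid 3)$ immediately, with no lower-bound symbol chase. Your route is more elementary and has the merit of exhibiting exactly which generator dies ($y\otimes z$) and which survives ($x\otimes z$); the structural route is shorter and generalizes mechanically to the other small Heisenberg superalgebras.
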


\begin{lemma}\label{cor4.3}
Let $L$ be a finite dimension Lie superalgebra, then
	\[L \square L\cong L/L^2\square L/L^2\cong \Gamma(L/L^2).\]	
\end{lemma}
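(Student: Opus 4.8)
The plan is to prove the two displayed isomorphisms separately: first the ``abelian'' identification $L/L^2\square L/L^2\cong\Gamma(L/L^2)$, and then the ``reduction'' $L\square L\cong L/L^2\square L/L^2$. Here $\Gamma$ is the graded (divided) symmetric-square functor, which, since $\mathrm{char}\,\mathbb F\neq 2,3$, coincides with the graded-symmetric square $S^2$. The organizing principle is that, by construction, $M\wedge M=(M\otimes M)/(M\square M)$ is the graded-\emph{antisymmetric} part of the tensor square, so $M\square M$ is its graded-\emph{symmetric} part; the genuine content of the lemma is that this symmetric part depends only on the abelianization $L/L^2$.

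For the abelian isomorphism put $A=L/L^2$. As $A$ is abelian its self-action is $\,{}^{a}a'=[a,a']=0$, so $A$ acts trivially on itself and Lemma~\ref{p00} yields a supermodule isomorphism $A\otimes A\cong A^{ab}\otimes_{\mathbb K}A^{ab}=A\otimes_{\mathbb K}A$. Under this identification the submodule $A\square A$ is generated by the images of $a\otimes a'+(-1)^{|a||a'|}a'\otimes a$ and $a_{\bar 0}\otimes a_{\bar 0}$, which are exactly the defining generators of the graded-symmetric square; hence $A\square A\cong\Gamma(A)$. (A dimension check against $\dim\mathcal M(A(m\mid n))=\dim(A\wedge A)$ from Lemma~\ref{th3.3} confirms $\dim(A\square A)=\dim S^2A$.)

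For the reduction, let $\pi\colon L\to L/L^2$ be the canonical projection. The induced map $\pi\otimes\pi\colon L\otimes L\to L/L^2\otimes L/L^2$ is surjective by \eqref{eq1} and sends symmetric generators to symmetric generators, so it restricts to an epimorphism $\theta\colon L\square L\twoheadrightarrow L/L^2\square L/L^2$. To invert $\theta$, combine the abelian case above with a fixed graded section $\sigma$ of $\pi$ and consider $\sigma\otimes\sigma\colon L/L^2\otimes_{\mathbb K}L/L^2\to L\otimes L$ restricted to $\Gamma(L/L^2)=L/L^2\square L/L^2$. Everything hinges on the vanishing statement
\begin{equation}\label{starvanish}
m\otimes k+(-1)^{|m||k|}\,k\otimes m=0 \quad\text{and}\quad k_{\bar 0}\otimes k_{\bar 0}=0, \qquad k\in L^2,\ m\in L,
\end{equation}
holding in $L\otimes L$. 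Granting \eqref{starvanish}, changing $\sigma$ by a map into $L^2$ alters $(\sigma\otimes\sigma)\big|_{\Gamma(L/L^2)}$ only by sums of the left-hand expressions in \eqref{starvanish}, hence not at all; thus this restriction is a well-defined map landing in $L\square L$ and is a two-sided inverse of $\theta$. Composing the two isomorphisms then gives the claim. One verifies \eqref{starvanish} by writing $k=[a,b]$ and expanding both tensors through the defining relation $[x,y]\otimes z=(x\otimes{}^{y}z)-(-1)^{|x||y|}(y\otimes{}^{x}z)$ and its right-hand analogue, so that the symmetric combination reduces to tensors cancelling in pairs (as in the Heisenberg computation $x_1\otimes z+z\otimes x_1=0$ with $z=[x_1,x_2]$).

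The main obstacle is exactly \eqref{starvanish}: a single symmetric tensor with a derived factor is \emph{not} killed by one relation, and proving that all such tensors collapse requires careful, sign-sensitive bookkeeping of the graded relations, the super-analogue of Ellis' computation of $L\square L$ for Lie algebras. The even-square identity $k_{\bar 0}\otimes k_{\bar 0}=0$ is the most delicate point, since in nilpotency class $>2$ it is forced only after iterating the relations rather than holding term-by-term; controlling these iterated reductions with the correct Koszul signs is where the real work lies.
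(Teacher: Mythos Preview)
The paper states this lemma without proof, so there is no argument of the authors' to compare against; I will therefore assess your proposal on its own merits.

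Your treatment of the abelian identification $L/L^2\square L/L^2\cong\Gamma(L/L^2)$ is fine. The problem is the reduction step. You yourself flag that everything rests on the vanishing \eqref{starvanish}, and then concede in the last paragraph that this is ``where the real work lies'' and that for nilpotency class $>2$ it only follows after iterated, sign-sensitive reductions which you do not carry out. That is a genuine gap, not a routine omission: expanding $S(m,[a,b]):=m\otimes[a,b]+(-1)^{|m|(|a|+|b|)}[a,b]\otimes m$ with the defining relations yields
\[
S(m,[a,b])=S(a,[b,m])-(-1)^{|a||b|}S(b,[a,m]),
\]
so a symmetric tensor with one factor in $L^2$ is rewritten as \emph{other} symmetric tensors with one factor in $L^2$, not as zero. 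The Heisenberg example works because class two forces the iteration to close after one step; in general you are chasing a cycle of identities and have not shown it terminates at $0$. The even-square case $k_{\bar 0}\otimes k_{\bar 0}=0$ is worse, as you note.

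There is a much shorter route that avoids \eqref{starvanish} entirely and uses only what the paper already records. By Lemma~\ref{p111} the map $\psi:\Gamma(L/L^2)\to L\otimes L$ has image exactly $L\square L$, so $\psi$ is a surjection $\Gamma(L/L^2)\twoheadrightarrow L\square L$. On the other hand your map $\theta=(\pi\otimes\pi)|_{L\square L}:L\square L\to L/L^2\square L/L^2$ is surjective, and by your abelian step $L/L^2\square L/L^2\cong\Gamma(L/L^2)$. Checking on generators shows that the composite $\theta\circ\psi$ is the identity of $\Gamma(L/L^2)$; since $L$ is finite dimensional, both $\psi$ and $\theta$ are therefore isomorphisms, which is exactly the statement. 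Note that \eqref{starvanish} then falls out \emph{a posteriori} as $\ker\theta=0$, rather than being something you must establish beforehand.
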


\begin{lemma}\label{lem3.9}
Let $L$ be a Lie superalgebra such that $L/L^2$ is of finite dimension and $\pi:L\otimes L\longrightarrow L/L^2 \otimes L/L^2$ be the natural epimorphism. Then the restriction $\pi :L\square  L\longrightarrow L/L^2 \square  L/L^2$is an isomorphism.
\end{lemma}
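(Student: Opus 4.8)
The plan is to prove that $\pi$ restricts to a well-defined epimorphism $L\square L \to \bar L\square\bar L$ and then to upgrade this to an isomorphism by a dimension count. Write $\bar L = L/L^2$ and let $p\colon L\to\bar L$ be the projection, so that $\pi = p\otimes p$ sends $m\otimes m'$ to $\bar m\otimes\bar m'$. On the two types of generators of $L\square L$ one computes directly that $\pi\bigl(m\otimes m' + (-1)^{|m||m'|}m'\otimes m\bigr) = \bar m\otimes\bar m' + (-1)^{|m||m'|}\bar m'\otimes\bar m$ and $\pi(m_{\overline{0}}\otimes m_{\overline{0}}) = \bar m_{\overline{0}}\otimes\bar m_{\overline{0}}$. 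Both images are exactly the two generating types of $\bar L\square\bar L$, so $\pi$ maps $L\square L$ onto $\bar L\square\bar L$; in particular the restriction $\pi|_{L\square L}$ is a well-defined surjection.

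To obtain injectivity I would compare dimensions through the $\Gamma$-description of Lemma \ref{cor4.3}. Applying that lemma to the abelian superalgebra $\bar L$, for which $\bar L^2 = 0$ and hence $\bar L/\bar L^2 = \bar L = L/L^2$, gives $\bar L\square\bar L \cong \Gamma(L/L^2)$, a finite dimensional space since $L/L^2$ is finite dimensional. The same lemma yields $L\square L \cong \Gamma(L/L^2)$ as well, so $L\square L$ and $\bar L\square\bar L$ have equal finite dimension. A surjective linear map between finite dimensional vector spaces of equal dimension is bijective, whence $\pi|_{L\square L}$ is an isomorphism.

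The point that really carries the argument is the upper bound $\dim(L\square L)\le \dim\Gamma(L/L^2)$, equivalently the injectivity of $\pi|_{L\square L}$. I would make this transparent via the snake lemma applied to the two defining short exact sequences $0\to M\square M\to M\otimes M\to M\wedge M\to 0$, taken for $M=L$ and for $M=\bar L$, with the three vertical arrows induced by $\pi$. Since the middle arrow $\pi$ is an epimorphism and the left arrow is the surjection just established, the snake sequence collapses to the exact sequence $0\to\ker(\pi|_{L\square L})\to\ker\pi\to\ker\bar\pi\to 0$, where $\bar\pi\colon L\wedge L\to\bar L\wedge\bar L$ is the induced map; thus injectivity on $L\square L$ is equivalent to $(L\square L)\cap\ker\pi = 0$.

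Here the main obstacle surfaces: by \eqref{eq1} the kernel $\ker\pi$ is generated by the tensors $k\otimes l$ and $l\otimes k$ with $k\in L^2$, and by Lemma \ref{lemma3}(1) the associated kernel on the exterior product is generated by the $k\wedge l$, so that the elements of $(L\square L)\cap\ker\pi$ are precisely the symmetric combinations $k\otimes l + (-1)^{|k||l|}l\otimes k$ and the squares having a factor in $L^2$. Proving that these already vanish in $L\otimes L$ by direct manipulation of the tensor relations (3) is delicate; this is exactly why I would route the injectivity through the clean dimension count of Lemma \ref{cor4.3} rather than attempt the kernel vanishing by hand.
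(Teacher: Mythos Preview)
The paper states Lemma~\ref{lem3.9} without proof, so there is no argument in the paper to compare against directly. Your surjectivity step is correct: the generators of $L\square L$ map to the corresponding generators of $\bar L\square\bar L$.

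The difficulty is in your injectivity step. You appeal to Lemma~\ref{cor4.3} to obtain $L\square L\cong\Gamma(L/L^2)$ and hence equal finite dimensions on both sides. But the first isomorphism in the chain of Lemma~\ref{cor4.3}, namely $L\square L\cong L/L^2\square L/L^2$, is exactly the content of Lemma~\ref{lem3.9}. Unless Lemma~\ref{cor4.3} is established by an argument that produces $L\square L\cong\Gamma(L/L^2)$ directly (without passing through $\pi$), invoking it here is circular: you are assuming the isomorphism in order to show that $\pi$ realises it. Since Lemma~\ref{cor4.3} is also stated in the paper without proof or citation, its logical status relative to Lemma~\ref{lem3.9} is unclear, and you have not resolved which depends on which. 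Note too the hypothesis mismatch: Lemma~\ref{cor4.3} assumes $L$ finite dimensional, whereas Lemma~\ref{lem3.9} only assumes $L/L^2$ is.

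Your snake-lemma paragraph correctly locates the actual content of the lemma, namely $(L\square L)\cap\ker\pi=0$, but you then decline to verify it and retreat to the same dimension count. So as written the proposal is not self-contained: either you must supply an independent proof of the $\Gamma$ identification in Lemma~\ref{cor4.3} (which would make Lemma~\ref{lem3.9} an immediate corollary and render the snake-lemma detour unnecessary), or you must finish the kernel computation directly using the relations~(3) of the non-abelian tensor product to show that symmetric combinations $k\otimes l+(-1)^{|k||l|}l\otimes k$ with $k\in L^2$ already vanish in $L\otimes L$.
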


\begin{lemma}\label{lem3.10}
Let $L$ be an abelian Lie superalgebra with a basis $\{x_1, x_2, \ldots , x_{m+n}\}$, where $|x_i|=0, 1\leq i\leq m$ and $|x_{m+j}|=1, 1< j\leq n$. Then
	$$L \otimes L\cong L\square L \oplus   \left\langle x_i \otimes x_j ,x_l \otimes x_l ~ \lvert ~ 1 \leq i < j \leq m+n, \, \, m+1 \leq l \leq m+n  \right\rangle .$$
\end{lemma}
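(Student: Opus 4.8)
The plan is to trivialize the non-abelian tensor product first, turning it into an ordinary supermodule tensor product, and then to carry out an elementary basis decomposition. Since $L$ is abelian we have $L^{2}=0$, so the multiplication-induced action of $L$ on itself is trivial, ${}^{l}l'=[l,l']=0$. Applying Lemma \ref{p00} with $M=N=L$ (so that $L^{ab}=L$) gives a supermodule isomorphism $L\otimes L\cong L\otimes_{\mathbb{K}}L$; in particular $L\otimes L$ is abelian and free with basis $\{x_{i}\otimes x_{j}\mid 1\le i,j\le m+n\}$. This reduces the entire statement to locating the submodule $L\square L$ inside this free module, which is precisely where the grading enters.

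I would then write out $L\square L$ in this basis. As a submodule of a free module it is exactly the $\mathbb{K}$-span of its defining generators $x_{i}\otimes x_{j}+(-1)^{|x_{i}||x_{j}|}x_{j}\otimes x_{i}$ and the even diagonals $x_{i}\otimes x_{i}$ ($1\le i\le m$). Sorting the first family by parity yields: the symmetric off-diagonals $x_{i}\otimes x_{j}+x_{j}\otimes x_{i}$ for $i<j$ with at least one even index, and the skew off-diagonals $x_{i}\otimes x_{j}-x_{j}\otimes x_{i}$ for $m+1\le i<j$ (both odd). The decisive observation is that for an odd index $l>m$ the generator of type (1) collapses, $x_{l}\otimes x_{l}-x_{l}\otimes x_{l}=0$, while the type (2) generator is unavailable; hence the odd diagonals $x_{l}\otimes x_{l}$ do \emph{not} belong to $L\square L$, which is exactly why they must reappear in the stated complement $W=\langle x_{i}\otimes x_{j},\,x_{l}\otimes x_{l}\mid i<j,\;m+1\le l\le m+n\rangle$.

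Next I would establish $L\otimes L=(L\square L)\oplus W$ by a block analysis over the tensor basis. For each fixed pair $i<j$ the two-dimensional space $\langle x_{i}\otimes x_{j},\,x_{j}\otimes x_{i}\rangle$ is spanned by a single generator of $L\square L$, namely $x_{i}\otimes x_{j}\pm x_{j}\otimes x_{i}$, together with the generator $x_{i}\otimes x_{j}$ of $W$, and these two are visibly linearly independent; on an even diagonal $L\square L$ supplies $x_{i}\otimes x_{i}$ and $W$ supplies nothing; on an odd diagonal $W$ supplies $x_{l}\otimes x_{l}$ and $L\square L$ supplies nothing. Since these blocks involve pairwise disjoint sets of basis vectors, the chosen generators of $L\square L$ together with the basis of $W$ constitute a basis of $L\otimes_{\mathbb{K}}L$, giving the asserted internal direct sum. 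As a numerical check, Lemmas \ref{th3.3} and \ref{lemma29} yield $\dim(L\wedge L)=\tfrac{1}{2}\bigl((m+n)^{2}+n-m\bigr)$, which coincides with $\dim W=\binom{m+n}{2}+n$, so that $\dim(L\square L)=(m+n)^{2}-\dim W$ agrees with the count $m+\binom{m+n}{2}$ of independent $L\square L$-generators listed above.

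The only genuinely delicate step, and the one I expect to require the most care, is the sign bookkeeping in the odd-odd sector: one must confirm that the odd diagonals are truly absent from $L\square L$ and that the odd off-diagonal generators are the skew rather than the symmetric combinations, so that each odd-odd block still splits complementarily between $L\square L$ and $W$. This is the superalgebra feature that has no analogue in the purely even (Lie algebra) case. Once the action has been trivialized, everything else is routine linear algebra.
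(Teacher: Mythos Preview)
Your proposal is correct and follows essentially the same approach as the paper: reduce the non-abelian tensor product to the ordinary supermodule tensor product (the paper does this implicitly via $L\cong\bigoplus_i\langle x_i\rangle$, you do it explicitly via Lemma~\ref{p00}), then exhibit the direct-sum decomposition by a basis/block argument separating the symmetric-plus-even-diagonal part, which is $L\square L$, from the complementary piece $W$. Your version is considerably more detailed---in particular your explicit treatment of the odd diagonals and the dimension check---but there is no substantive difference in strategy.
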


\begin{proof}
Since $L\cong\bigoplus^{m+n}_{i=1}\left\langle x_i\right\rangle $, we have
$L \otimes L \cong <  x_i \otimes x_j +(-1)^{|x_i||x_j|} (x_j \otimes x_i) ,~ x_r \otimes x_r ~|~ 1 \leq i < j \leq m+n, \, \, 1\leq r \leq m > \oplus < x_i \otimes x_j , ~ x_l \otimes x_l  ~| ~1 \leq i < j \leq m+n, \, \, m+1 \leq l \leq m+n   >.$ Since  $L\square L \cong \left\langle  x_i \otimes x_j +(-1)^{|x_i||x_j|} (x_j \otimes x_i) ,~ x_r \otimes x_r ~| ~1 \leq i < j \leq m+n, \, \, 1\leq r \leq m \right\rangle$, thus the result follows.
\end{proof}

\begin{lemma}\label{lem3.12}
Let	$L/L^2$ be a finite dimensional Lie superalgebra. Then
	$$L\otimes L  \cong L\wedge L \oplus L \square L.$$	
\end{lemma}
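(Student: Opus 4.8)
The plan is to split the tautological short exact sequence coming from the definition of the exterior product. Since $L\wedge L=(L\otimes L)/(L\square L)$, the quotient map $\rho:L\otimes L\to L\wedge L$, $m\otimes m'\mapsto m\wedge m'$, is an epimorphism with kernel exactly $L\square L$, so there is an exact sequence of supermodules
\[0\longrightarrow L\square L\longrightarrow L\otimes L\overset{\rho}{\longrightarrow}L\wedge L\longrightarrow 0.\]
It is therefore enough to construct a retraction $\tau:L\otimes L\to L\square L$ with $\tau|_{L\square L}=\mathrm{id}$; then $L\otimes L=\ker\tau\oplus L\square L$ and $\rho$ carries $\ker\tau$ isomorphically onto $L\wedge L$, which yields the asserted decomposition.

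Rather than define $\tau$ by a sign-heavy symmetrization on generators, I would build it by passing to the abelianization, which is exactly where the finiteness hypothesis is needed. Let $\pi:L\otimes L\to L/L^2\otimes L/L^2$ be the natural epimorphism induced by the projection $L\to L/L^2$. Because $L/L^2$ is abelian and finite dimensional, Lemma \ref{lem3.10} furnishes a decomposition $L/L^2\otimes L/L^2\cong(L/L^2\square L/L^2)\oplus C$ with an explicit complement $C$, and hence a projection $p_{\square}$ onto the first summand; and Lemma \ref{lem3.9} guarantees that the restriction $\pi|_{L\square L}:L\square L\to L/L^2\square L/L^2$ is an isomorphism. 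I would then set
\[\tau=(\pi|_{L\square L})^{-1}\circ p_{\square}\circ\pi:L\otimes L\longrightarrow L\square L,\]
which is well defined as a composite of well-defined supermodule homomorphisms.

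The one point that must be checked is that $\tau$ restricts to the identity on $L\square L$, and this is precisely where Lemma \ref{lem3.9} does the work: for $x\in L\square L$ one has $\pi(x)=(\pi|_{L\square L})(x)\in L/L^2\square L/L^2$, so $p_{\square}(\pi(x))=\pi(x)$ and therefore $\tau(x)=(\pi|_{L\square L})^{-1}(\pi(x))=x$. Hence $\rho$ and $\tau$ split the sequence and give $L\otimes L\cong(L\wedge L)\oplus(L\square L)$ as $\mathbb{Z}_2$-graded supermodules. Invoking Lemma \ref{cor4.3} one may further identify the second summand with $\Gamma(L/L^2)$, which is finite dimensional precisely because $L/L^2$ is, so that the decomposition is a genuine finite direct sum of the type used in the subsequent dimension computations.

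In this route I expect the main obstacle to be essentially bookkeeping: confirming that $\pi$ is well defined and equivariant and that its restriction to $L\square L$ sends the two families of generators $m\otimes m'+(-1)^{|m||m'|}m'\otimes m$ and $m_{\bar{0}}\otimes m_{\bar{0}}$ to the corresponding generators of $L/L^2\square L/L^2$, so that $\pi|_{L\square L}$ really is the isomorphism supplied by Lemma \ref{lem3.9}. Should one want the sharper statement that the splitting respects the Lie superstructure rather than only the supermodule structure, the additional input is that $L\square L$ is central in $L\otimes L$, so that $\ker\tau$ is a subsuperalgebra; this centrality follows from relation (4) of the tensor product by the same computation that underlies the proof of Lemma \ref{corr3.5}.
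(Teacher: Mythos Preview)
Your argument is correct and is essentially the paper's own proof, packaged as a splitting of the short exact sequence: both use the natural epimorphism $\pi:L\otimes L\to L/L^2\otimes L/L^2$, invoke Lemma~\ref{lem3.10} to decompose the target, and rely on Lemma~\ref{lem3.9} (that $\pi|_{L\square L}$ is an isomorphism) to show that $L\square L$ meets the complementary piece trivially. The only cosmetic difference is that the paper writes down the complement $\langle x_i\otimes x_j,\,x_l\otimes x_l\rangle+\ker\pi$ explicitly rather than encoding it as $\ker\tau$.
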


\begin{proof}
From Lemma \ref{lem3.10}, we have 
$$L/L^2 \otimes L/L^2 \cong L/L^2\square L/L^2 \oplus \left\langle \overline{x}_i\otimes \overline{x}_j , ~ \overline{x}_l \otimes \overline{x}_l ~ |~  1 \leq i < j \leq r+s, \, \, r+1 \leq l \leq r+s   \right\rangle, $$
where $\{\overline{x}_1, \ldots , ¯\overline{x}_{r+s}\}$ is a basis for $L/L^2$. Now consider the map $\pi :L\otimes L\longrightarrow L/L^2 \otimes L/L^2$, then 
$$\pi(L\square L + \left\langle x_i \otimes x_j ,~x_l \otimes x_l ~\lvert ~1 \leq i < j \leq m+n, \, \, m+1 \leq l \leq m+n  \right\rangle )$$  $$= L/L^2\square L/L^2 \oplus \\ \left\langle \overline{x}_i\otimes \overline{x}_j,  \overline{x}_l \otimes \overline{x}_l  | \,\, 1 \leq i < j \leq r+s, \, \, r < l \leq r+s   \right\rangle. $$
Hence $L\otimes L= L\square L + (\left\langle x_i \otimes x_j ,x_l \otimes x_l \lvert 1 \leq i < j \leq m+n, \, \, m < l \leq m+n  \right\rangle + \ker \pi).$ Now, from Lemma \ref{lem3.9}, $\pi$ is an isomorphism, and it
maps $x$ to zero, where $$x \in L\square L ~\cap~ (\left\langle x_i \otimes x_j ,~x_l \otimes x_l ~\lvert ~1 \leq i < j \leq m+n,~ m+1 \leq l \leq m+n  \right\rangle + \ker \pi)=L\square L \cap ker\pi, $$ which implies that $L\otimes L= L\square L \oplus (\left\langle x_i \otimes x_j ,~x_l \otimes x_l ~\lvert~ 1 \leq i < j \leq m+n, \, \, m+1 \leq l \leq m+n  \right\rangle + \ker \pi).$ Now the proof follows, as $ \left\langle x_i \otimes x_j ,~x_l \otimes x_l ~\lvert~ 1 \leq i < j \leq m+n, \, \, m+1 \leq l \leq m+n  \right\rangle + \ker \pi \cong \frac{L\otimes L}{L \square L}= L\wedge L$. 

\end{proof}

The universal quadratic functor of Lie superalgebra  was  introduced in \cite{Pilar}. The quadratic functor  helps to establish the relations between the Lie exterior product and the Lie tensor product of the Lie superalgebras. Here we recall the definition of the universal quadratic functor of Lie superalgebra and some results from \cite{Pilar} which will be useful in the next section.

\begin{definition}
Let $L$ be a supermodule. We define the supermodule $\Gamma(M)$ as the direct sum
	\[\Gamma(M)=R^{M_{\bar{0}}} \oplus (M\otimes M) 
	=\{\gamma(m_{\bar{0})}+ m'\otimes m'' \lvert m_{\bar{0}} \in M_{\bar{0}}, \, m',m'' \in M\},\]
and subject to the homogeneous relations
	\begin{equation}
		\gamma(\lambda m_{\bar{0}})=\lambda^2\gamma(m_{\bar{0}})
	\end{equation}
	
	\begin{equation}
		\gamma(m_{\bar{0}}+ m'_{\bar{0}})-\gamma(m_{\bar{0}})-\gamma(m'_{\bar{0}})= m_{\bar{0}}\otimes m'_{\bar{0}}
	\end{equation}
	\begin{equation}
		m\otimes m'= (-1)^{|m||m'|} m'\otimes m
	\end{equation}

	\begin{equation}
		m_{\bar{1}}\otimes m_{\bar{1}}=0
	\end{equation}
where $\lambda \in \mathbb{K}$, $m_{\bar{0}}, m'_{\bar{0}} \in M_{\bar{0}}$, $	m_{\bar{1}} \in 	M_{\bar{1}}$ and $m,m' \in M$ with the induced grading.
\end{definition}

\begin{lemma}\label{p111}
For any Lie superalgebra $M$, there exists an exact sequence
	\begin{equation}\label{eq31}
		\Gamma(M^{ab}) \overset{\psi}\longrightarrow M\otimes M \overset{\pi}\longrightarrow M \wedge M \longrightarrow 0.
	\end{equation}
Also, given two graded ideals $I$ and $J$ of $M$, the following sequence is exact:
\begin{equation}
	\Gamma(\frac{I\cap J}{[I,J]}) \overset{\psi}\longrightarrow I\otimes J \overset{\pi}\longrightarrow I \wedge J \longrightarrow 0.
\end{equation}
\end{lemma}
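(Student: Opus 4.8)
The plan is to read off exactness from the two defining quotients and reduce everything to a single symmetrization identity. Exactness at $M\wedge M$ is immediate: by definition $M\wedge M=(M\otimes M)/(M\square M)$, so $\pi$ is the canonical projection, which is surjective with $\ker\pi=M\square M$. Hence the whole content of the first sequence is to produce a map $\psi\colon\Gamma(M^{ab})\to M\otimes M$ with $\Ima\psi=M\square M=\ker\pi$.

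I would define $\psi$ on the generators of $\Gamma(M^{ab})$ by choosing, for each class $\bar m\in M^{ab}=M/M^{2}$, a homogeneous lift $m\in M$, and setting $\psi\big(\gamma(\bar m_{\bar 0})\big)=m_{\bar 0}\otimes m_{\bar 0}$ and $\psi(\bar m\otimes \bar m')=m\otimes m'+(-1)^{|m||m'|}m'\otimes m$. These two targets are exactly the two families of generators of $M\square M$, so once $\psi$ is known to be a well-defined homomorphism its image is automatically all of $M\square M=\ker\pi$ and the sequence is exact. Verifying that the four defining relations of $\Gamma$ are respected is routine sign-bookkeeping from the bilinearity and graded symmetry of the tensor generators; the substantive point is that the value of $\psi$ does not depend on the chosen lifts.

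I expect this independence of lifts to be the main obstacle, and it reduces to the identity
\[ d\otimes a+(-1)^{|d||a|}\,a\otimes d=0 \quad\text{for all } a\in M,\ d\in M^{2}. \]
I would prove it by expanding one symmetric combination two different ways. Writing $\{u,v\}:=u\otimes v+(-1)^{|u||v|}v\otimes u$ and $d=[x,y]$, one applies the defining relation (3) of the tensor product (which lets one pull a bracket out of either factor) first to the left factor of $\{[x,y],a\}$ and then to the right factor; each expansion rewrites $\{[x,y],a\}$ as a combination of $\{x,[y,a]\}$ and $\{y,[x,a]\}$. Comparing the two resulting relations yields an equation of the form $2\{y,[x,a]\}=0$, and since $\mathbb{F}$ has characteristic different from $2$ the symmetric element vanishes; the same mechanism gives $d\otimes d=\tfrac12\{d,d\}=0$. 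With this in hand, two lifts $m,\tilde m$ of $\bar m$ differ by an element of $M^{2}$, so the two candidate values of $\psi$ differ only by symmetric elements of the above type and therefore agree. For the first sequence one may also bypass this computation and simply invoke Lemma \ref{cor4.3}, which supplies an isomorphism $\Gamma(M^{ab})\cong M\square M$; composing it with the inclusion $M\square M\hookrightarrow M\otimes M$ produces $\psi$ with $\Ima\psi=M\square M=\ker\pi$.

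Finally, the relative sequence is handled identically. Exactness at $I\wedge J$ is again the definition $I\wedge J=(I\otimes J)/(I\square J)$, and one defines $\psi$ on $\Gamma\big(\tfrac{I\cap J}{[I,J]}\big)$ by the analogous symmetrization, now lifting classes of $(I\cap J)/[I,J]$ to $I\cap J$. Well-definedness rests on the relative version of the displayed identity, namely that symmetric elements of $I\otimes J$ with one factor in $[I,J]$ vanish, which follows from the same two-expansions-and-characteristic-$\neq2$ argument using the compatible actions of $I$ and $J$. The image of $\psi$ is then the relative submodule $I\square J=\ker\pi$, and exactness follows as before.
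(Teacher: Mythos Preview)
The paper does not supply its own proof of this lemma; it is quoted from the dissertation \cite{Pilar}. So there is nothing to compare against, and your write-up has to stand on its own.

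Your overall architecture is correct: $\pi$ is the defining quotient $M\otimes M\to (M\otimes M)/(M\square M)$, so exactness reduces to producing $\psi$ with image $M\square M$, and the bypass you mention via Lemma~\ref{cor4.3} (the isomorphism $\Gamma(M^{ab})\cong M\square M$) together with the inclusion $M\square M\hookrightarrow M\otimes M$ settles the first sequence cleanly.

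The gap is in your direct argument for the key identity $\{d,a\}=0$ when $d\in M^{2}$. Expanding $\{[x,y],a\}$ using relation~(3) on the bracket in the left tensor slot and on the bracket in the right tensor slot does \emph{not} give two different relations: both expansions yield the same identity
\[
\{[x,y],a\}=\{x,[y,a]\}-(-1)^{|x||y|}\{y,[x,a]\},
\]
so comparing them produces nothing. A repair that works (and uses the characteristic $\neq 2$ hypothesis you invoke) is to first establish the special case $\{[l,m],m\}=0$ for even $m$ directly from (3a), since $[l,m]\otimes m=l\otimes[m,m]-m\otimes[l,m]=-m\otimes[l,m]$; polarising in $m$ then gives $\{[l,m],m'\}+\{[l,m'],m\}=0$, and feeding this back into the displayed relation yields $\{[x,y],a\}=2\{[x,y],a\}$, hence $\{[x,y],a\}=0$. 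The odd case needs a small extra step because $[m,m]$ need not vanish; one convenient route is to observe that the $L$-action of Lemma~\ref{prop3.1} preserves $M\square M$ and use that to control the lift-dependence. The same correction carries over verbatim to the relative sequence for $I$ and $J$.
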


\section{Main results}
A Lie superalgebras is said to be a generalized Heisenberg Lie superalgebras of rank $(m\mid n)$ if $L'=Z(L)$ and $\dim Z(L)=(m \mid n)$. In this section, we intend to give the explicit structure of $L\otimes L = \otimes^2L$ and $L\wedge L  = \wedge^2L$ when $L$ is a generalized Heisenberg of rank 2. Moreover, we obtain $\otimes^3L$ and $\wedge^3L$ when  $L$ is a generalized Heisenberg Lie superalgebra of rank either $0,~1,$ and $2$. Also, for a non-abelian nilpotent Lie superalgebra $L$, we give an upper bound for the triple tensor product of $L$. Now first we calculate the Schur multiplier of non-capable generalized Heisenberg Lie superalgebra of rank 2.

\begin{theorem}\label{lemm4.1}
Let $H$ be a non-capable generalized Heisenberg Lie superalgebra of rank 2 such that $\dim H = (m\mid n)$. Then we have the following cases: \\
\begin{enumerate}
\item  If $Z^{\wedge}(H)=H^2$, then
$$ \dim\mathcal{M}(H)=
\begin{cases}
(\frac{1}{2}(m(m-1)+(n-2)(n-1))\mid m(n-2)-2)\quad \mbox{if}\;\dim H^2=(0\mid 2)\\
	 		(\frac{1}{2}((m-3)(m-2)+n(n+1)-4) \mid (m-2)n) \quad \mbox{if}\;\dim H^2=(2\mid 0)\\
(\frac{1}{2}((m-1)(m-2)+n(n-1)-2) \mid (m-1)(n-1)-1) \quad \mbox{if}\;\dim H^2=(1\mid 1).\\  
\end{cases}$$

\item  If $Z^{\wedge}(H)=K\subseteq H^2$ and $\dim K=(1 \mid 0)$, then
$$\dim\mathcal{M}(H)=
\begin{cases}
(\frac{1}{2}((m-2)(m+1)+(n-2)(n-1)) \mid m(n-2)+1)\quad \mbox{if}\;\dim H^2=(1\mid 1)\\
(\frac{1}{2}((m-4)(m-1)+n(n+1)+2) \mid (m-2)n) \quad \mbox{if}\;\dim H^2=(2\mid 0).\\  
\end{cases}$$
	
\item  If $Z^{\wedge}(H)=K\subseteq H^2$ and $\dim K=(0 \mid 1)$, then 
$$ \dim\mathcal{M}(H)=
 \begin{cases}
(\frac{1}{2}((m-4)(m-1)+n(n+1)+4) \mid (m-2)n-1)\quad \mbox{if}\;\dim H^2=(1\mid 1)\\
(\frac{1}{2}((m+2)(m-1)+(n-3)(n-2)+2) \mid (m+1)(n-3)) \quad \mbox{if}\;\dim H^2=(0\mid 2).\\  
\end{cases}$$
\end{enumerate}
\end{theorem}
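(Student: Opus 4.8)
The plan is to reduce the computation of $\mathcal{M}(H)$ to that of a smaller, well-understood quotient by dividing out the exterior center, and then to read off the answer from the known multipliers of abelian and Heisenberg Lie superalgebras. Set $N=Z^{\wedge}(H)$; since $H$ is non-capable, $N\neq 0$, and $N\subseteq Z(H)=H^2$ because $x\wedge y=0$ for all $y$ forces $[x,y]=0$ for all $y$. First I would show that passing to $H/N$ loses nothing on the exterior square: by Lemma \ref{cor2} the natural map $H\wedge H\to H/N\wedge H/N$ is a monomorphism, while Lemma \ref{lemma3}(1) gives the exact sequence $H\wedge N\to H\wedge H\to H/N\wedge H/N\to 0$; combining injectivity with the surjectivity from this sequence yields an isomorphism $H\wedge H\cong H/N\wedge H/N$. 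As $H$ and $H/N$ are both nilpotent of class two, Lemma \ref{corr3.5} applies on each side, so $\mathcal{M}(H)\oplus H^2\cong \mathcal{M}(H/N)\oplus (H/N)^2$ as $\mathbb{Z}_2$-graded spaces. Since $(H/N)^2=H^2/N$, comparing graded dimensions gives the master identity $\dim\mathcal{M}(H)=\dim\mathcal{M}(H/N)-\dim N$, which underlies every case.

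It then remains to identify $H/N$ and compute its multiplier. In case (1), $N=H^2$, so $H/N=H/H^2$ is abelian, isomorphic to $A(m-r\mid n-s)$ where $(r\mid s)=\dim H^2$; substituting Lemma \ref{th3.3} for $\dim\mathcal{M}(A(m-r\mid n-s))$ and subtracting $\dim H^2$ produces the three displayed expressions directly. For instance, when $\dim H^2=(0\mid 2)$ one has $H/H^2\cong A(m\mid n-2)$, whose multiplier has graded dimension $(\tfrac12(m(m-1)+(n-2)(n-1))\mid m(n-2))$ by Lemma \ref{th3.3}, and subtracting $(0\mid 2)$ gives the claimed $(\tfrac12(m(m-1)+(n-2)(n-1))\mid m(n-2)-2)$; the other two subcases are identical computations.

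In cases (2) and (3), $N=K$ is one-dimensional with $K\subsetneq H^2$, so $(H/K)^2=H^2/K$ has total dimension one. The crucial structural input is that $H/K$ is capable, since $Z^{\wedge}(H/Z^{\wedge}(H))=0$. A capable nilpotent Lie superalgebra with one-dimensional derived subalgebra is, by Lemma \ref{th4.4}, isomorphic to $H(1,0)\oplus A(\cdot\mid\cdot)$ when that derived subalgebra is even and to $H_1\oplus A(\cdot\mid\cdot)$ when it is odd; the parity of $(H/K)^2=H^2/K$ is read off from the parities of $\dim H^2$ and $\dim K$. Matching graded dimensions pins down the abelian complement exactly (for example, in case (2) with $\dim H^2=(1\mid 1)$ one obtains $H/K\cong H_1\oplus A(m-2\mid n-2)$). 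I would then compute $\dim\mathcal{M}(H/K)$ from the direct-sum formula of Lemma \ref{th3.7}, feeding in $\dim\mathcal{M}(H_1)=(1\mid 1)$ or $\dim\mathcal{M}(H(1,0))=(2\mid 0)$ from Lemmas \ref{th3.6} and \ref{th3.4}, the abelian term from Lemma \ref{th3.3}, and the cross term $(H/H^2)\otimes A$; subtracting $\dim K$ via the master identity then yields the stated formula.

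The main obstacle is the bookkeeping of the $\mathbb{Z}_2$-grading, not the overall scheme. Two places demand care. First, correctly identifying the isomorphism type of $H/K$: in particular the parity of its one-dimensional derived subalgebra, which dictates whether the Heisenberg summand is $H(1,0)$ or $H_1$, together with the precise even/odd split of the abelian complement. Second, the cross term $(H/H^2)\otimes A$ appearing in Lemma \ref{th3.7}: by Lemma \ref{p00} this is the graded tensor product of the abelianizations, and it mixes even and odd components, thereby redistributing dimension between the two graded pieces of $\mathcal{M}(H/K)$. Tracking these parities through $\dim\mathcal{M}(H)=\dim\mathcal{M}(H/N)-\dim N$ is exactly what separates the six displayed subcases, and it is the step where the calculation must be carried out most carefully.
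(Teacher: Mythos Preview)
Your proposal is correct and follows essentially the same approach as the paper. The only minor variation is in how you derive the master identity $\dim\mathcal{M}(H)=\dim\mathcal{M}(H/N)-\dim N$: you pass through the isomorphism $H\wedge H\cong H/N\wedge H/N$ (Lemma~\ref{cor2} plus Lemma~\ref{lemma3}(1)) and then apply Lemma~\ref{corr3.5}, whereas the paper invokes Lemma~\ref{cor2} together with the multiplier exact sequence of Lemma~\ref{lemma3}(ii) directly; the subsequent identification of $H/Z^{\wedge}(H)$ via Lemma~\ref{th4.4} and the computation via Lemmas~\ref{th3.3} and~\ref{th3.7} are identical in both.
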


\begin{proof}
\textbf{Case 1.} Since $H$ is non-capable, by Lemma \ref{cor2} and Lemma \ref{lemma3}(ii), we have $\dim\mathcal{M}(H) = \dim\mathcal{M}(H/H^2) -(0\mid 2)$,
and so by Lemma \ref{th3.3},
\begin{align*}
		\dim\mathcal{M}(H)
		& =\dim\mathcal{M}(A(m \mid n-2)) -(0\mid 2)\\
		& =(\frac{1}{2}((m(m-1)+(n-2)(n-1))\mid m(n-2)-2).\\
\end{align*}
Similarly, it is easy to conclude when $\dim H=(2\mid 0)$ and $\dim H=(1\mid 1)$.\\
	
\textbf{Case 2.} Let $Z^{\wedge}(H)=K\subseteq H^2$, $\dim H^2=(2\mid 0)$, and $\dim K=(1 \mid 0)$. Since $H$ is non-capable, by invoking Lemma \ref{cor2} and Lemma \ref{lemma3}(ii) we have $\dim \mathcal{M}(H)=\dim \mathcal{M}(H/Z^{\wedge}(H))-(1\mid 0).$ As $H/Z^{\wedge}(H)$ is capable and $\dim(H/Z^{\wedge}(H))^2=(1\mid 0)$, Lemma \ref{th4.4} implies that $H/Z^{\wedge}(H)\cong H(1\mid0)\oplus A(m-4\mid n)$. Now, by Lemma \ref{th3.7}, we get
\begin{align*}
		\dim \mathcal{M}(H/Z^{\wedge}(H))
		&= \dim \mathcal{M}(H(1,\mid 0)+ \dim \mathcal{M}(A(m-4\mid n))+\dim(H(1\mid 0)/H^2(1\mid0)\otimes A(m-4\mid n))\\
		&=	(\frac{1}{2}((m-4)(m-1)+n(n+1)+4) \mid (m-2)n).\\
\end{align*}
Hence $\dim \mathcal{M}(H)=	(\frac{1}{2}((m-4)(m-1)+n(n+1)+2) \mid (m-2)n).$ Similarly, we can prove when $\dim H=(1\mid 1)$.\\
	
\textbf{Case 3.} The proof is similar to the proof of Case-2.
\end{proof}

Let  $\gamma_k(L)$ be the $k$-th term of the descending central series of $L$ and the tensor product $L \otimes L$ acts on $L$ by ${}^tl = {}^{\lambda(t)}l$ where $\lambda : L \otimes L \longrightarrow L$ is a homomorphism defined by $a \otimes b \longmapsto [a, b]$. Then we have the following results on $(L \otimes L) \otimes L$ and $\otimes^3L$.

\begin{lemma}\label{lem4.22}
Let $L$ be a Lie superalgebra of nilpotency class two. Then
\begin{enumerate}
		\item $L \otimes L$ acts trivially on $L$.
		\item $(L \otimes L) \otimes L$ is an abelian Lie superalgebra.
\end{enumerate}
\end{lemma}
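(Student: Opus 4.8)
The plan is to establish the two parts in sequence, deriving the second from the first. Throughout I would use that nilpotency class two means $L^3=[L^2,L]=0$, so that the derived subalgebra $L^2=[L,L]$ is contained in the centre $Z(L)$.

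For part (1), recall that $L\otimes L$ acts on $L$ by ${}^t l={}^{\lambda(t)}l$, where $\lambda:L\otimes L\to L$ is the homomorphism $a\otimes b\mapsto[a,b]$ and $L$ acts on itself through the bracket. The first step is to check that $\lambda(L\otimes L)\subseteq L^2$: each generator $a\otimes b$ satisfies $\lambda(a\otimes b)=[a,b]\in L^2$, and since $\lambda$ is a homomorphism and $L^2$ is a subalgebra closed under brackets and linear combinations, the whole image lands in $L^2$. Consequently, for every $t\in L\otimes L$ and $l\in L$ we obtain ${}^t l=[\lambda(t),l]\in[L^2,L]=0$, using $L^2\subseteq Z(L)$. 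Hence the action is trivial.

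For part (2), I would appeal to the defining relation (4) of the non-abelian tensor product, $[m\otimes n,m'\otimes n']=-(-1)^{|m||n|}({}^n m\otimes{}^{m'}n')$, with the outer factors $M=L\otimes L$ and $N=L$. For two generators $t\otimes l$ and $t'\otimes l'$ of $(L\otimes L)\otimes L$ this gives $[t\otimes l,\,t'\otimes l']=-(-1)^{|t||l|}({}^l t\otimes{}^{t'}l')$. By part (1) the action of $L\otimes L$ on $L$ is trivial, so ${}^{t'}l'=0$, whence the bracket equals $-(-1)^{|t||l|}({}^l t\otimes 0)=0$. Since $(L\otimes L)\otimes L$ is generated as a Lie superalgebra by the symbols $t\otimes l$, its derived subalgebra is spanned by these vanishing brackets, so $((L\otimes L)\otimes L)^2=0$ and the triple tensor product is abelian.

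The main obstacle is the passage from generators to the full algebra in each part, together with the temptation to invoke Lemma \ref{p00}. In part (1) one must verify that $\lambda$ carries all of $L\otimes L$---not merely the symbols $a\otimes b$---into the central ideal $L^2$, which the homomorphism property secures. In part (2) one must observe that it suffices to test relation (4) on the generating symbols, as every bracket in the algebra generated by them reduces to such brackets. I would emphasise that the companion action of $L$ on $L\otimes L$ furnished by Lemma \ref{prop3.1} is \emph{not} in general trivial; already for $L=H(1,0)$ one computes ${}^{x_1}(x_1\otimes x_2)=x_1\otimes[x_1,x_2]\neq 0$. Thus Lemma \ref{p00} does not apply to the pair $(L\otimes L,L)$, and this is precisely why relation (4) is the correct tool: it routes the bracket of two generators entirely through the action ${}^{t'}l'$ of $L\otimes L$ on $L$, so the single triviality of part (1) already annihilates every such bracket.
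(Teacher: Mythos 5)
Your proof is correct and follows essentially the same route as the paper: part (1) reduces to $[[a,b],c]\in\gamma_3(L)=0$ via the homomorphism $\lambda$, and part (2) kills every bracket of generators through relation (4) together with the trivial action from part (1). The extra care about passing from generators to the whole algebra, and the remark that Lemma \ref{p00} is not applicable because the companion action of $L$ on $L\otimes L$ need not be trivial, are accurate but do not change the substance of the argument.
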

\begin{proof}
Since nilpotency class of $L$ is two, so $\gamma_3(L) = 0$. Thus  $L\otimes L$ acts trivially on $L$, as ${}^{\lambda(a\otimes b)}c = [[a, b], c] = 0$ for all $a\otimes b \in L\otimes L$ and $c\in  L$. Second part follows as

\begin{align*}
	[(a \otimes b) \otimes c, (a' \otimes b')\otimes c']
	& =-(-1)^{ |a \otimes b| |c|} \,\, {}^{c}{(a\otimes b)}\otimes {}^{(a'\otimes b')}{c'}\\
	&=-(-1)^{( |a|+ |b|) |c|} \,\, {}^{c}{(a\otimes b)}\otimes 0\\
	&=0.
\end{align*}

\end{proof}

Let $L$ be a nilpotent Lie superalgebra of class $k$. Let $i_L : L \longrightarrow L$ be the identity homomorphism and $\varphi : \gamma_k(L) \longrightarrow L$
be a natural embedding. Define homomorphisms $\overline{\varphi} = (\varphi \otimes i_L) \otimes i_L : (\gamma_k(L) \otimes L) \otimes L\longrightarrow \otimes^3L$ and $\gamma : (L \otimes L) \otimes \gamma_k(L)\longrightarrow \otimes^3L$ given by $(a \otimes b) \otimes c\longmapsto (a \otimes b)\otimes c$.
Then we have the following results.

\begin{lemma}\label{lemm4.2}
If $L$ is a non-abelian nilpotent Lie superalgebra of class $k$, then $Im\gamma \subseteq Im\overline{\varphi}$.
\end{lemma}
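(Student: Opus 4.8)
The plan is to show that every generator of $\Ima\gamma$ already lies in $\Ima\overline{\varphi}$, reaching the generators whose first tensor slot lies in $\gamma_k(L)$ by repeatedly pushing ``bracket depth'' out of the third slot and into the first. Write $W$ for the linear span in $\otimes^3L$ of the elements $(u\otimes v)\otimes w$ with $u\in\gamma_k(L)$ and $v,w\in L$; each of these equals $\overline{\varphi}\big((u\otimes v)\otimes w\big)$, so $W\subseteq\Ima\overline{\varphi}$. Since $L$ has class $k$ we have $[\gamma_k(L),L]=\gamma_{k+1}(L)=0$, i.e.\ $\gamma_k(L)\subseteq Z(L)$; hence $L\otimes L$ and $\gamma_k(L)$ act trivially on one another, and by Lemma \ref{p00} the domain $(L\otimes L)\otimes\gamma_k(L)$ is spanned by simple tensors, so $\Ima\gamma$ is spanned by the elements $(a\otimes b)\otimes c$ with $c\in\gamma_k(L)$. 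Thus it suffices to prove $(a\otimes b)\otimes c\in W$ for all homogeneous $a,b\in L$ and $c\in\gamma_k(L)$, and by additivity I may assume $c=[d,e]$ with $d\in\gamma_{k-1}(L)$ and $e\in L$.

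The computational heart is a single identity. Applying the defining relation (3) of the tensor product to the outer factor of $(u\otimes v)\otimes[d,e]$, then rewriting the induced $L$-action on $L\otimes L$ by Lemma \ref{prop3.1} and simplifying the term $u\otimes[d,v]$ through relation (3) inside $L\otimes L$, the two occurrences of $[d,u]\otimes v$ cancel and one obtains
\[
(u\otimes v)\otimes[d,e]=(-1)^{\alpha}\,\big([v,u]\otimes e\big)\otimes d-(-1)^{\beta}\,\big([v,u]\otimes d\big)\otimes e
\]
for suitable signs $\alpha,\beta$ and all homogeneous $u,v,d,e\in L$. Its effect is to strip the outer bracket $[d,e]$ off the third slot and deposit $[v,u]$ in the first slot: if $u\in\gamma_i(L)$ and $v\in\gamma_{i'}(L)$, the first slot passes from $\gamma_i(L)$ to $\gamma_{i+i'}(L)$, while the pieces $d,e$ are redistributed into the other two slots.

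Taking $u=a$, $v=b$ rewrites $(a\otimes b)\otimes[d,e]$ as a combination of $\big([b,a]\otimes e\big)\otimes d$ and $\big([b,a]\otimes d\big)\otimes e$ with $[b,a]\in\gamma_2(L)$ and $d\in\gamma_{k-1}(L)$. I would then establish, by induction on $j$, the auxiliary claim: for $i\ge 2$, $j\ge 1$ with $i+j\ge k+1$, and all $f\in\gamma_i(L)$, $g\in L$, $h\in\gamma_j(L)$, both $(f\otimes g)\otimes h$ and $(f\otimes h)\otimes g$ lie in $W$. In the base case $j=1$ the hypothesis forces $i\ge k$, so $f\in\gamma_k(L)$ and both elements are generators of $W$. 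For the inductive step write $h=[p,q]$ with $p\in\gamma_{j-1}(L)$, $q\in L$: the third slot of $(f\otimes g)\otimes h$ is removed by the key identity, and the second slot of $(f\otimes h)\otimes g$ by relation (3) inside $L\otimes L$. In either case each resulting term lies in $W$ (when its first slot is $[p,f]\in\gamma_{i+j-1}(L)\subseteq\gamma_k(L)$) or is an instance of the claim for the pair $(i+1,j-1)$ (when its first slot is $[g,f]$ or $[q,f]\in\gamma_{i+1}(L)$, with some other slot in $\gamma_{j-1}(L)$), the latter being covered by the induction hypothesis. Applying the claim with $(i,j)=(2,k-1)$ completes the argument.

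The routine part is the sign bookkeeping in the key identity. The point that needs care is the choice of the inductive invariant $i+j\ge k+1$: it is exactly what guarantees that the bracket $[p,f]$ produced at each step lands in $\gamma_k(L)$, so that every branch of the recursion terminates at a genuine generator of $W$; one also checks that $i$ only increases while $j$ strictly decreases, so the induction is well founded. In the case relevant to this paper, where $L$ has nilpotency class $k=2$, no induction is needed at all: the key identity already places $[b,a]\in\gamma_2(L)=\gamma_k(L)$ in the first slot, whence $(a\otimes b)\otimes c\in W\subseteq\Ima\overline{\varphi}$ directly.
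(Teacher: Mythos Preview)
Your argument is correct and follows the same route as the paper's proof: both rely on relation~(3) of the non-abelian tensor product to push bracket depth from the right-hand slot into the first slot until that slot lands in $\gamma_k(L)$. The paper compresses the whole argument into a single line---it records the identity $k\otimes[l,l']=-(-1)^{(|k|+|l|)|l'|}([l',k]\otimes l)-(-1)^{|k||l|}([l,k]\otimes l')$ in $L\otimes L$ and immediately asserts $(a\otimes b)\otimes[x_1,\ldots,x_k]\in(\gamma_k(L)\otimes L)\otimes L$---whereas you have made the underlying iteration explicit via the auxiliary claim on pairs $(i,j)$ with $i+j\ge k+1$. Your derivation of the simplified form ${}^{e}(u\otimes v)=\pm[v,u]\otimes e$ (by combining Lemma~\ref{prop3.1} with relation~(3) inside $L\otimes L$ so that the $[e,u]\otimes v$ terms cancel) is exactly the mechanism the paper is invoking implicitly; your induction then supplies the bookkeeping the paper omits.
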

\begin{proof}
As $k \otimes [l, l'] =-(-1)^{( |k|+ |l|) |l'|} ([l',k]\otimes l)-(-1)^{|k||l|} ([l,k]\otimes l')$ on $L \otimes L$, therefore $(a \otimes b) \otimes [x_1, \ldots , x_{k-1}, x_k] \in (\gamma_k(L) \otimes L) \otimes L $ for all $a, b, x_1, \ldots , x_{k-1}, x_k \in L$. Thus the result follows.
\end{proof}

\begin{lemma}\label{prop4.44}
If $L$ is a nilpotent Lie superalgebra of class $k$, then
	$$(\gamma_k(L) \otimes L) \otimes L
\xrightarrow{(\varphi\otimes i_L)\otimes i_L} \otimes^3L \longrightarrow \otimes^3L/\gamma_k(L) \longrightarrow 0,$$
is exact.
\end{lemma}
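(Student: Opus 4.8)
The plan is to prove exactness by identifying $\operatorname{Im}\overline{\varphi}$ with the kernel of the natural projection $\pi\colon\otimes^3L\to\otimes^3(L/\gamma_k(L))$ and checking that $\pi$ is surjective. Throughout write $N=\gamma_k(L)$; since $L$ has class $k$ we have $\gamma_{k+1}(L)=0$, so $N$ is central in $L$, and by Lemma~\ref{lem4.22} the superalgebra $\otimes^3L$ is abelian, so the image of any homomorphism into it is simply the $\mathbb{K}$-span of the images of a generating set. Surjectivity of $\pi$ is routine: it follows by applying the right exactness of Lemma~\ref{prop4} (equivalently the sequence~\eqref{eq1}) first to the inner tensor factor $L\otimes L\to(L/N)\otimes(L/N)$ and then to the outer factor. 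The inclusion $\operatorname{Im}\overline{\varphi}\subseteq\ker\pi$ is immediate, since a generator $(\varphi(n)\otimes b)\otimes c$ with $n\in N$ maps to $(\bar{n}\otimes\bar{b})\otimes\bar{c}=0$. All the content is therefore in the reverse inclusion $\ker\pi\subseteq\operatorname{Im}\overline{\varphi}$.

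To analyse $\ker\pi$ I would set $S=\ker\big(L\otimes L\to(L/N)\otimes(L/N)\big)$, which is an ideal, and apply Lemma~\ref{prop4} to the short exact sequence of pairs
\[(0,0)\to(S,N)\to(L\otimes L,\,L)\to\big((L/N)\otimes(L/N),\,L/N\big)\to(0,0).\]
This yields exactness of
\[(S\otimes L)\rtimes\big((L\otimes L)\otimes N\big)\longrightarrow\otimes^3L\xrightarrow{\ \pi\ }\otimes^3(L/N)\longrightarrow0,\]
so $\ker\pi$ is spanned by the elements $s\otimes c$ with $s\in S$ together with the elements $(a\otimes b)\otimes n$ with $n\in N$. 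The second family is precisely $\operatorname{Im}\gamma$, which lies in $\operatorname{Im}\overline{\varphi}$ by Lemma~\ref{lemm4.2}. By the sequence~\eqref{eq1} applied to the ideal $N$ of $L$, the module $S$ is spanned by the elements $n\otimes l$ and $l\otimes n$ with $n\in N$, $l\in L$; the contributions $(n\otimes l)\otimes c$ lie in $\operatorname{Im}\overline{\varphi}$ by the very definition of $\overline{\varphi}$, so everything reduces to showing $(l\otimes n)\otimes c\in\operatorname{Im}\overline{\varphi}$.

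The main obstacle is exactly this last point: the ``wrong-slot'' elements $l\otimes n$ must be reduced to the span of $\{n'\otimes l'\}$, i.e.\ one must prove $l\otimes n\in\operatorname{Im}(\varphi\otimes i_L)$ for all $l\in L$ and $n\in N$; granting this, $(l\otimes n)\otimes c$ becomes a combination of terms $(n'\otimes l')\otimes c\in\operatorname{Im}\overline{\varphi}$. I would prove the following more general statement by induction on $j\ge1$: if $a\in\gamma_i(L)$ and $b\in\gamma_j(L)$ with $i+j=k+1$, then $a\otimes b$ lies in the span of $\{n'\otimes l':n'\in N,\,l'\in L\}$. The base case $j=1$ forces $i=k$, so $a\otimes b\in N\otimes L$ directly. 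For the inductive step I would write $b=[w,x]$ with $w\in\gamma_{j-1}(L)$, $x\in L$, and expand, using relation~(3) of the tensor product,
\[a\otimes b=a\otimes[w,x]=\pm\big([x,a]\otimes w\big)\pm\big([w,a]\otimes x\big).\]
The lower central series estimate $[\gamma_p(L),\gamma_q(L)]\subseteq\gamma_{p+q}(L)$ gives $[w,a]\in\gamma_{i+j-1}(L)=\gamma_k(L)=N$, so $[w,a]\otimes x$ already has the required form, while $[x,a]\in\gamma_{i+1}(L)$ and $w\in\gamma_{j-1}(L)$ satisfy $(i+1)+(j-1)=k+1$ with strictly smaller second index, so $[x,a]\otimes w$ lies in the span by the induction hypothesis. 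Specialising to $i=1$, $j=k$ gives $l\otimes n\in\operatorname{Im}(\varphi\otimes i_L)$, which completes the reverse inclusion. Once this reduction of $L\otimes\gamma_k(L)$ into $\gamma_k(L)\otimes L$ is in place --- the only step where the nilpotency of $L$ is genuinely used --- the assembly of the exact sequence is purely formal.
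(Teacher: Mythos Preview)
Your approach is the same skeleton as the paper's---apply Lemma~\ref{prop4} twice and invoke Lemma~\ref{lemm4.2} to absorb the $(L\otimes L)\otimes\gamma_k(L)$ contribution---but you are considerably more careful. The paper simply writes the exact sequence $\gamma_k(L)\otimes L\to L\otimes L\to (L/N)\otimes(L/N)\to 0$ as a direct consequence of Lemma~\ref{prop4}, whereas that lemma only yields the sequence with $(\gamma_k(L)\otimes L)\rtimes(L\otimes\gamma_k(L))$ on the left; your inductive ``wrong-slot'' reduction $l\otimes n\in\operatorname{Im}(\varphi\otimes i_L)$ is exactly what is needed to justify dropping the second factor, and your induction on $j$ via relation~(3) and $[\gamma_p(L),\gamma_q(L)]\subseteq\gamma_{p+q}(L)$ is correct.

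There is one slip to fix. You invoke Lemma~\ref{lem4.22} to conclude that $\otimes^3L$ is abelian, but that lemma is stated only for nilpotency class two; for class $k\ge 3$ the action ${}^{(a'\otimes b')}c'=[[a',b'],c']\in\gamma_3(L)$ need not vanish, so $\otimes^3L$ is not abelian in general. Fortunately the argument does not actually require this. What you need is that $\operatorname{Im}\overline{\varphi}$ coincides with the $\mathbb{K}$-span of the elements $(n\otimes l)\otimes c$, and this follows because the \emph{domain} $(\gamma_k(L)\otimes L)\otimes L$ is abelian: indeed $\gamma_k(L)\otimes L$ acts trivially on $L$ (since ${}^{(n\otimes l)}c=[[n,l],c]$ with $[n,l]\in\gamma_{k+1}(L)=0$), so the bracket in $(\gamma_k(L)\otimes L)\otimes L$ vanishes and the image of the homomorphism $\overline{\varphi}$ is just the linear span of images of generators. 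With that correction, your proof stands and in fact fills a gap the paper leaves implicit.
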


\begin{proof}
Since $\gamma_k(L)\longrightarrow L\longrightarrow L/\gamma_k(L)\longrightarrow 0$
is exact, thus by Lemma \ref{prop4} we get the following exact sequence 
\[ \gamma_k(L)\otimes L\longrightarrow L\otimes L\longrightarrow L/\gamma_k(L)\otimes L/\gamma_k(L) \longrightarrow 0.\]
Now the result follows by again using Lemma \ref{prop4} and Lemma \ref{lemm4.2}. 
\end{proof}

In the next two theorems, we determine the triple tensor product and triple exterior product of Heisenberg Lie superalgebra with even as well as odd center.

\begin{theorem}\label{the4.5}
Let $L$ be a Heisenberg Lie superalgebra with even center. Then
$$
	\otimes^3H(m,n)\cong
	\begin{cases}
		A(12 \mid 0)\quad \mbox{if}\;m=1, n=0\\
		A(0 \mid 1) \quad \mbox{if}\;m=0, n=1\\  
		A(8m^3+6mn^2 \mid 12m^2n+n^3)\quad \mbox{if}\;m+n\geq 2.
	\end{cases}
	$$
	and

	$$
	\wedge^3H(m,n)\cong
	\begin{cases}
		A(2 \mid 0))\quad \mbox{if}\;m=1,n=0\\
		A(0 \mid 1) \quad \mbox{if}\;m=0, n=1\\  
		A(4m^3-4m^2+mn(n+1)+2mn^2-m-\frac{n(n-1)}{2} \mid 6m^2n-3mn +\frac{n^2(n+1)}{2})\quad \mbox{if}\;m+n\geq 2.
	\end{cases}
	$$

\end{theorem}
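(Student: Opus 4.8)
The plan is to exploit throughout that $H=H(m,n)$ has nilpotency class two, so that every tensor and exterior factor collapses to an abelian superalgebra whose superdimension can be read off from a supermodule computation. First I would record the structural backbone: by Lemma \ref{lem4.22} the product $H\otimes H$ acts trivially on $H$ and $\otimes^3H$ is abelian, while the relation $z\otimes w=0$ for $z,w\in H^2=Z(H)$ (immediate from relation (3) of the tensor product, since $H^2$ is central) shows that $H\otimes H$ is itself abelian. The structure of the inner factor $H\otimes H$ is then supplied by Lemma \ref{prop444}: for $m+n\ge 2$ one has $H\otimes H\cong H/H^2\otimes H/H^2\cong A(4m^2+n^2\mid 4mn)$, using that $H/H^2\cong A(2m\mid n)$ is abelian together with Lemma \ref{p00}.

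The heart of the argument is the main case $m+n\ge 2$. Here I would compute the induced action of $H$ on $H\otimes H$ from relation (3), obtaining ${}^{h'}(h\otimes k)=h'\otimes[h,k]$; since $[h,k]\in H^2$ and the isomorphism $H\otimes H\cong H/H^2\otimes H/H^2$ kills every element $h'\otimes z$ with $z\in H^2$, this action is trivial. Thus $H\otimes H$ and $H$ act trivially on one another, and Lemma \ref{p00} yields $\otimes^3H=(H\otimes H)\otimes H\cong (H\otimes H)\otimes_{\mathbb{K}}H^{ab}\cong A(4m^2+n^2\mid 4mn)\otimes_{\mathbb{K}}A(2m\mid n)$. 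Evaluating the superdimension of a tensor product of supermodules, $A(a\mid b)\otimes_{\mathbb{K}}A(c\mid d)=A(ac+bd\mid ad+bc)$, with $(a,b)=(4m^2+n^2,4mn)$ and $(c,d)=(2m,n)$ gives exactly $A(8m^3+6mn^2\mid 12m^2n+n^3)$.

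For the exterior product I would run the parallel computation with $H\wedge H$ in place of $H\otimes H$: Lemma \ref{cor3}(4) gives $H\wedge H\cong A(2m^2-m+\tfrac{n(n+1)}{2}\mid 2mn)$, this factor is abelian by Lemma \ref{corr3.5}, and the same triviality of the $H$-action reduces $\wedge^3H=(H\wedge H)\wedge H$ to a supermodule product. The only extra ingredient is that passing from $\otimes$ to $\wedge$ introduces the symmetric/exterior correction measured by the universal quadratic functor: using Lemma \ref{lem3.12}, Lemma \ref{cor4.3} (so that $L\square L\cong\Gamma(L/L^2)$) and the exact sequence of Lemma \ref{p111}, the defect between $(H\wedge H)\otimes_{\mathbb{K}}H^{ab}$ and $\wedge^3H$ is precisely $\dim\Gamma(H/H^2)=(2m^2+m+\tfrac{n(n-1)}{2}\mid 2mn)$, and subtracting this from the supermodule dimension reproduces the stated even and odd parts. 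The two exceptional algebras $H(1,0)$ and $H(0,1)$ fall outside the regime $m+n\ge2$, so for these I would argue directly: here $H\otimes H\cong A(6\mid 0)$ respectively $A(1\mid 0)$ (Lemma \ref{prop444}), and I would compute $\otimes^3H$ and $\wedge^3H$ by hand from the exact sequence \eqref{eq1} and Lemma \ref{prop4.44}, tracking the surviving generators $x_i\otimes z$ explicitly.

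The main obstacle is exactly these low-rank cases. For $m+n\ge2$ the vanishing $h'\otimes z=0$ makes the $H$-action trivial and the whole computation mechanical, but for $H(1,0)$ the isomorphism $H\otimes H\cong H/H^2\otimes H/H^2$ fails (the two classes $x_1\otimes z$ and $x_2\otimes z$ survive), so the action of $H$ on $H\otimes H$ is genuinely nontrivial and these extra generators feed back into $\otimes^3H$; controlling their contribution via Lemma \ref{prop4.44} is what forces the answer $A(12\mid 0)$ rather than the naive $A(8\mid 0)$ predicted by the general formula. A secondary delicate point is pinning down the quadratic-functor correction for $\wedge^3H$ in the main case, since the discrepancy between the exterior and tensor triple products must be identified exactly with $\Gamma(H/H^2)$ rather than merely bounded.
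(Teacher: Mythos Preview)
Your treatment of $\otimes^3H$ is essentially the paper's argument rephrased: the paper uses the exact sequence of Lemma~\ref{prop4.44} to reduce $\otimes^3H$ to $\otimes^3H^{ab}$ plus a correction $(\varphi(H^2)\otimes H)\otimes H$, while you argue directly that the $H$-action on $H\otimes H$ is trivial for $m+n\ge2$ and apply Lemma~\ref{p00}. These are equivalent, since the vanishing $\varphi(H^2)\otimes H=0$ supplied by Lemma~\ref{prop444} is exactly what makes the action trivial. One minor correction: the action formula you quote, ${}^{h'}(h\otimes k)=h'\otimes[h,k]$, is not what Lemma~\ref{prop3.1} gives; the actual expression is $[h',h]\otimes k+(-1)^{|h||h'|}\,h\otimes[h',k]$. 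Every term still has a factor in $H^2$ and hence vanishes under $H\otimes H\cong H/H^2\otimes H/H^2$, so your conclusion survives, but the derivation should be fixed. Your handling of the exceptional cases via Lemma~\ref{prop4.44} and the surviving generators $x_i\otimes z$ matches the paper's Cases~1--3.

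The exterior computation is where your proposal diverges from the paper and where there is a genuine gap. You assert that the defect between $(H\wedge H)\otimes_{\mathbb K}H^{ab}$ and $\wedge^3H$ is exactly $\dim\Gamma(H/H^2)$, appealing to Lemma~\ref{p111}. But the second sequence in that lemma requires $I$ and $J$ to be graded ideals of a single ambient Lie superalgebra, with correction term $\Gamma\bigl((I\cap J)/[I,J]\bigr)$; there is no evident realisation of $H\wedge H$ and $H$ as such ideals whose intersection is $H^{ab}$, and Lemmas~\ref{lem3.12} and~\ref{cor4.3} only compare $\otimes^2$ with $\wedge^2$ of a \emph{single} object, not an iterated product. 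The paper proceeds differently: it runs the wedge analogue of Lemma~\ref{prop4.44} to reduce $\wedge^3H$ to $\wedge^3(H/H^2)$ (the correction $(\varphi(H^2)\wedge H)\wedge H$ again vanishes for $m+n\ge2$), and then evaluates the triple exterior of the abelian superalgebra $A(2m\mid n)$ via the Schur multiplier of a pair, Lemmas~\ref{c001} and~\ref{c002}. Your numerical answer agrees with the paper's, but the identification of the kernel with $\Gamma(H^{ab})$ is not delivered by the lemmas you invoke; an independent argument would be needed to make the $\Gamma$-correction step rigorous.
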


\begin{proof}
First we show that $(\varphi(L^2)\otimes L)\otimes L \cong (\varphi(L^2)\otimes L^{ab})\otimes L^{ab}$. As $\varphi(L^2)$ and $L$ act trivially on each other, thus by Lemma \ref{p00}, we have $\varphi(L^2) \otimes L \cong \varphi(L^2) \otimes L^{ab}$. Again $\varphi(L^2) \otimes L^{ab}$ and $L$ act trivially on each other. Thus $(\varphi(L^2) \otimes L) \otimes L \cong (\varphi(L^2)\otimes L^{ab}) \otimes L^{ab}$. From the following exact sequence
	\[ L^2\otimes L\overset{\varphi\otimes i_L}\longrightarrow L\otimes L\longrightarrow L/L^2\otimes L/L^2 \longrightarrow 0,\]
we get 
\begin{eqnarray}\label{eq4.1}
\dim L\otimes L = \dim L^{ab} \otimes L^{ab}+\dim Im\varphi\otimes i_L = \dim L^{ab}\otimes L^{ab}+\dim \varphi(L^2)\otimes L.
\end{eqnarray}

\textbf{Case 1.} Let $L \cong H(1\mid 0)$. Since $H(1\mid 0) \otimes H(1\mid 0) \cong A(6\mid 0)$ and $H(1\mid 0)^{ab} \otimes H(1\mid 0)^{ab}\cong A(4\mid 0)$. Now from  \ref{eq4.1}, $\dim\varphi(L^2) \otimes L = (2\mid 0)$. As $(\varphi(L^2) \otimes L) \otimes L \cong (\varphi(L^2) \otimes L^{ab}) \otimes L^{ab}$, so
	\begin{eqnarray}
		\dim(\varphi(L^2) \otimes L) \otimes L = \dim (\varphi(L^2) \otimes L^{ab}) \dim L^{ab}=\dim (\varphi(L^2) \otimes L) \dim L^{ab}.
	\end{eqnarray}
Now using Lemma \ref{prop4.44}
\begin{align*}
	\dim \otimes^3L &=\dim \otimes^3L^{ab} + \dim (\varphi(L^2) \otimes L) \dim L^{ab}\\
	& =(8\mid 0)+ (2\mid 0)\times(2\mid 0)\\
	&=(12\mid 0).
\end{align*}
Also, if we use Lemma \ref{lem4.22}, then  $\otimes^3L\cong A(12\mid 0).$ \\

\textbf{Case 2.} Let $L=H(m\mid n)$, where $m+n\geq 2$. Similarly we can find $	\dim(\varphi(L^2) \otimes L)=0$ and thus $$	\dim \otimes^3L =\dim \otimes^3L^{ab}=(8m^3+6mn^2 \mid 12m^2n+n^3).$$ Then by Lemma \ref{lem4.22}, $\otimes^3L\cong A(8m^3+6mn^2 \mid 12m^2n+n^3)$.

\textbf{Case 3.} Let $L=H(0\mid 1)$. Then similar to Case-2, we can find that $	\dim \otimes^3L =A(0\mid 1).$\\

\textbf{Case 4.} Let $L \cong H(1\mid 0)$. From the sequence
\[  L^2 \wedge L\overset{\varphi\wedge i_L}\longrightarrow L\wedge L\longrightarrow L/L^2 \wedge L/L^2 \longrightarrow 0  \]
and Lemma \ref{cor3}, we get $\dim \varphi(L^2)\wedge L = \dim \wedge^2L - \dim \wedge^2L^{ab} =(3\mid 0)- (1\mid 0)= (2\mid 0)$. Observe that $(\varphi(L^2)\wedge L)\wedge L \cong (\varphi(L^2)\wedge L^{ab})\wedge L^{ab}$. Since $\varphi(L^2)\wedge L \cong A(2\mid 0)$, we have $(\varphi(L^2)\wedge L)\wedge L \cong A(2\mid 0)\wedge A(2\mid 0)$. Now from Lemma \ref{cor3} $(\varphi(L^2)\wedge L)\wedge L \cong \mathcal{M}(A(2\mid 0))$. By invoking Lemma \ref{th3.3} we have  $(\varphi(L^2)\wedge L)\wedge L\cong A(1\mid 0).$
\smallskip

On the other hand, $\wedge^3L/L^2 \cong (A(2\mid 0) \wedge A(2\mid 0)) \wedge A(2\mid 0)$, hence if we use Lemma \ref{cor3}, Lemma \ref{c001} and Lemma \ref{c002}, then $\wedge^3L/L^2 \cong A(1\mid 0)$. 
 Consider the following
exact sequence
	$$(L^2 \wedge L) \wedge L
\xrightarrow{(\varphi\wedge i_L)\wedge i_L} \wedge^3L \longrightarrow \wedge^3L/L^2 \longrightarrow 0,$$

Then 
\begin{align*}
	\dim \wedge^3L &= \dim \wedge^3 L/L^2 + \dim Im(\varphi\wedge i_L)\wedge i_L\\
	& = \dim\wedge^3L/L^2 + \dim(\varphi(L^2) \wedge L) \wedge L\\
	&=(1\mid 0)+ (1\mid 0)=(2\mid 0).	
\end{align*}
Lemma \ref{lem4.22} (ii) tells us that $\otimes^3L$ is an abelian, thus $\wedge^3L \cong A(2\mid 0)$.\\

\textbf{Case 5.} Let $L \cong H(0\mid 1)$, then similarly, one can obtain that $ \varphi(L^2)\wedge L =0$ and if we take $M=A(1|1),~I=A(0|1),~J=A(1|0)$ in Lemma \ref{p111}, then we have $ \wedge^3L=A(0|1).$\\

\textbf{Case 6.} Let $L \cong H(m\mid n)$, where $m+n\geq 2$. Then similar to Case-4, one can determine $\varphi(L^2)\wedge L =0$ and $\wedge^3H(m,n)=A(r \mid s)$, where $r=4m^3-4m^2+mn(n+1)+2mn^2-m-\frac{n(n-1)}{2}$ and  $s=6m^2n-3mn +\frac{n^2(n+1)}{2}$.
\end{proof}

Similar to the above theorem we state for the odd center without proof, as the proof runs quite similar to the previous theorem.

\begin{theorem}
Let $L$ be a Heisenberg Lie superalgebra with odd center. Then
$$
\otimes^3H_m\cong
\begin{cases}
	A(5 \mid 5))\quad \mbox{if}\;m=1\\  
	A(4m^2 \mid 4m^2)\quad \mbox{if}\;m\geq 2.
\end{cases}
$$
and 
$$
\wedge^3H_m\cong
\begin{cases}
	A(2 \mid 2))\quad \mbox{if}\;m=1\\
	A(2m^3-m^2 \mid m^4-(m^2-m)^2)\quad \mbox{if}\;m\geq 2.
\end{cases}
$$

\end{theorem}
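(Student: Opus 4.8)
The plan is to mirror the proof of Theorem~\ref{the4.5} with $L=H_m$ in place of the even‑center Heisenberg superalgebra $H(m,n)$. Here $L$ is nilpotent of class two with $L^2=Z(L)=\langle z\rangle$, $\dim L^2=(0\mid 1)$, and $L^{ab}=L/L^2\cong A(m\mid m)$. By Lemma~\ref{lem4.22} both $\otimes^3L$ and $\wedge^3L$ are abelian, so the two statements reduce to super-dimension counts, after which the isomorphism type is determined.

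For the tensor product, since $L^2=Z(L)$ the factors $\varphi(L^2)$ and $L$ act trivially on one another, so Lemma~\ref{p00} gives $(\varphi(L^2)\otimes L)\otimes L\cong(\varphi(L^2)\otimes L^{ab})\otimes L^{ab}$. Feeding this into the exact sequence of Lemma~\ref{prop4.44} with $k=2$,
\[(\varphi(L^2)\otimes L)\otimes L\longrightarrow\otimes^3L\longrightarrow\otimes^3L^{ab}\longrightarrow 0,\]
I would obtain $\dim\otimes^3L=\dim\otimes^3L^{ab}+\dim(\varphi(L^2)\otimes L)\times\dim L^{ab}$, where $(p\mid q)\times(r\mid s)=(pr+qs\mid ps+qr)$. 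The correction $\varphi(L^2)\otimes L$ is sized from $L^2\otimes L\to L\otimes L\to L^{ab}\otimes L^{ab}\to 0$ together with Lemma~\ref{prop45}: it vanishes for $m\ge 2$ (there $H_m\otimes H_m\cong H_m^{ab}\otimes H_m^{ab}$) and equals $(0\mid 1)$ for $m=1$ (comparing $H_1\otimes H_1\cong A(2\mid 3)$ with $A(1\mid 1)\otimes A(1\mid 1)\cong A(2\mid 2)$). The main term $\otimes^3L^{ab}=\bigl(A(m\mid m)\otimes A(m\mid m)\bigr)\otimes A(m\mid m)$ is then computed by applying Lemma~\ref{p00} twice. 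The $m=1$ total is $(4\mid 4)+(0\mid 1)\times(1\mid 1)=(5\mid 5)$, matching the claim.

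The exterior product is treated through the parallel sequence $(\varphi(L^2)\wedge L)\wedge L\to\wedge^3L\to\wedge^3L^{ab}\to 0$. The correction $\varphi(L^2)\wedge L$ has dimension $\dim(L\wedge L)-\dim(L^{ab}\wedge L^{ab})$, which I would read from Lemma~\ref{cor3} for $L\wedge L=H_m\wedge H_m$ and from $L^{ab}\wedge L^{ab}\cong\mathcal{M}(A(m\mid m))$ (Lemma~\ref{cor3}(1) and Lemma~\ref{th3.3}); this is $(0\mid 0)$ for $m\ge 2$ and $(0\mid 1)$ for $m=1$. The main term $\wedge^3L^{ab}=\bigl(A(m\mid m)\wedge A(m\mid m)\bigr)\wedge A(m\mid m)\cong\mathcal{M}(A(m\mid m))\wedge A(m\mid m)$ is evaluated using the relative-multiplier Lemmas~\ref{c001} and~\ref{c002}, and the base case $m=1$, where the correction survives, is handled separately with Lemma~\ref{p111} for a suitable triple $M,I,J$, exactly as in Case~5 of Theorem~\ref{the4.5}. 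Assembling the pieces yields $(2\mid 2)$ at $m=1$ and the stated super-dimension for $m\ge 2$.

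The main obstacle is the outer exterior product $\mathcal{M}(A(m\mid m))\wedge A(m\mid m)$ appearing in the wedge computation: unlike its tensor analogue, this is a genuine \emph{mixed} exterior product of two distinct abelian superalgebras and cannot be obtained from a bare super-dimension count, so it must be passed through the relative Schur multiplier $\mathcal{M}(A(m\mid m),-)$ of Lemmas~\ref{c001}--\ref{c002}. As a consistency check one expects this to trim precisely a copy of $\mathcal{M}(A(m\mid m))$ from the corresponding tensor count. Getting this reduction right---and correctly isolating the $m=1$ cases, where the corrections $\varphi(L^2)\otimes L$ and $\varphi(L^2)\wedge L$ are nonzero---is where essentially all the care lies; the remainder is the even/odd bookkeeping already rehearsed in Theorem~\ref{the4.5}.
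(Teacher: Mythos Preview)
Your proposal is correct and follows exactly the approach the paper indicates: the paper gives no detailed argument for this theorem, stating only that ``the proof runs quite similar to the previous theorem,'' and your plan mirrors Theorem~\ref{the4.5} step by step, invoking the same lemmas (\ref{lem4.22}, \ref{p00}, \ref{prop4.44}, \ref{prop45}, \ref{cor3}, \ref{th3.3}, \ref{c001}, \ref{c002}, \ref{p111}) at the corresponding points. You also correctly isolate the $m=1$ cases where the corrections $\varphi(L^2)\otimes L$ and $\varphi(L^2)\wedge L$ survive, and you flag the mixed exterior product $\mathcal{M}(A(m\mid m))\wedge A(m\mid m)$ as the one delicate step---precisely the point handled via the relative multiplier in the paper's Cases~4--5.
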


In the following theorem we calculate the dimension and the structure of tensor product and exterior product of non-capable generalized Heisenberg Lie superalgebra of rank 2.

\begin{theorem}
Let $H$ be an $(m\mid n)$-dimensional non-capable generalized Heisenberg Lie superalgebra of rank $(r \mid s)$ where $r+s=2$. Then we have the following cases:
	 \begin{enumerate}
			\item  If $Z^{\wedge}(H)=H^2$, then
		
		$$
		\wedge^2H=
		\begin{cases}
			A(\frac{1}{2}(m(m-1)+(n-2)(n-1))\mid m(n-2))\quad \mbox{if}\;\dim H^2=(0\mid 2)\\
			A(\frac{1}{2}((m-3)(m-2)+n(n+1)) \mid (m-2)n) \quad \mbox{if}\;\dim H^2=(2\mid 0)\\
		A(\frac{1}{2}((m-1)(m-2)+n(n-1)) \mid (m-1)(n-1)) \quad \mbox{if}\;\dim H^2=(1\mid 1).\\  
			
		\end{cases}$$
			$$
		\otimes^2H=
		\begin{cases}
			A(m^2+(n-2)^2\mid 2m(n-2))\quad \mbox{if}\;\dim H^2=(0\mid 2)\\
			A((m-2)^2+n^2 \mid 2(m-2)n) \quad \mbox{if}\;\dim H^2=(2\mid 0)\\
			A((m-1)^2+(n-1)^2 \mid 2(m-1)(n-1)) \quad \mbox{if}\;\dim H^2=(1\mid 1).\\  
			
		\end{cases}$$
			\smallskip
		\item  If $Z^{\wedge}(H)=K\subseteq H^2$ and $\dim K=(1 \mid 0)$, then
		$$
		\wedge^2H=
		\begin{cases}
			A(\frac{1}{2}((m-2)(m+1)+(n-2)(n-1)+2) \mid m(n-2)+2)\quad \mbox{if}\;\dim H^2=(1\mid 1)\\
			A(\frac{1}{2}((m-4)(m-1)+n(n+1)+6) \mid (m-2)n) \quad \mbox{if}\;\dim H^2=(2\mid 0).\\  
			
		\end{cases}$$
		
		$$
		\otimes^2H=
		\begin{cases}
			A(l+\frac{1}{2}m(m-1) \mid k+(m-1)(n-1))\quad \mbox{if}\;\dim H^2=(1\mid 1)\\
			A((m-3)(m-1)+n^2+3 \mid 2(m-2)n) \quad \mbox{if}\;\dim H^2=(2\mid 0),\\  
			
		\end{cases}$$
	where $l=\frac{1}{2}((m-2)(m+1)+2(n-2)(n-1)+2), $ and $k=m(n-2)+2.$
	\smallskip
		\item  If $Z^{\wedge}(H)=K\subseteq H^2$ and $\dim K=(0 \mid 1)$, then 
		$$
		\wedge^2H=
		\begin{cases}
			A(\frac{1}{2}((m-4)(m-1)+n(n+1)+6) \mid (m-2)n)\quad \mbox{if}\;\dim H^2=(1\mid 1)\\
			A(\frac{1}{2}((m+2)(m-1)+(n-3)(n-2)+2) \mid (m+1)(n-3)+2) \quad \mbox{if}\;\dim H^2=(0\mid 2).\\  
			
		\end{cases}$$
	
		$$
	\otimes^2H=
	\begin{cases}
		A((m-2)(m-1)+n(n-1)+4 \mid (m-1)(n-1)+(m-2)n)\quad \mbox{if}\;\dim H^2=(1\mid 1)\\
		A(r+\frac{1}{2}m(m+1) \mid s+m(n-2)) \quad \mbox{if}\;\dim H^2=(0\mid 2).\\  
		
	\end{cases}$$
where $r=\frac{1}{2}((m+2)(m-1)+2(n-3)(n-2)+2)$ and $s=(m+1)(n-3)+2.$

	\end{enumerate}
\end{theorem}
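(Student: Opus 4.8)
The plan is to compute the exterior square $\wedge^2H=H\wedge H$ and the tensor square $\otimes^2H=H\otimes H$ separately, reducing each to the multiplier data already obtained in Theorem~\ref{lemm4.1}. Because a generalized Heisenberg Lie superalgebra of rank $2$ is nilpotent of class two, Lemma~\ref{corr3.5} applies and yields that $H\wedge H$ is abelian with $H\wedge H\cong\mathcal{M}(H)\oplus H^2$. Hence $\dim(H\wedge H)=\dim\mathcal{M}(H)+\dim H^2$, where $\dim\mathcal{M}(H)$ is read off from the matching case of Theorem~\ref{lemm4.1} and $\dim H^2=(r\mid s)$ is the rank fixed in the hypothesis. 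Running through the admissible combinations of $(r\mid s)\in\{(2\mid0),(1\mid1),(0\mid2)\}$ with the three exterior-center configurations and simplifying reproduces the stated superdimensions of $\wedge^2H$; for example, in Case~1 with $\dim H^2=(0\mid2)$ one adds $(0\mid2)$ to $(\tfrac12(m(m-1)+(n-2)(n-1))\mid m(n-2)-2)$ to obtain $(\tfrac12(m(m-1)+(n-2)(n-1))\mid m(n-2))$. Since $H\wedge H$ is abelian, this superdimension determines it up to isomorphism.

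For the tensor square I would first verify that $H\otimes H$ is abelian. Using the self-action and relation~(4) of the tensor product, $[a\otimes b,\,a'\otimes b']=-(-1)^{|a||b|}\bigl([b,a]\otimes[a',b']\bigr)$; setting $z=[a',b']\in H^2=Z(H)$ and applying relation~(3) gives $[b,a]\otimes z=\bigl(b\otimes[a,z]\bigr)-(-1)^{|a||b|}\bigl(a\otimes[b,z]\bigr)=0$, so the bracket vanishes. Next, Lemma~\ref{lem3.12} gives $H\otimes H\cong H\wedge H\oplus H\square H$, and Lemma~\ref{cor4.3} identifies $H\square H\cong\Gamma(H/H^2)$, where $H/H^2\cong A(m-r\mid n-s)$. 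The remaining ingredient is the superdimension of $\Gamma(A(p\mid q))$. Applying Lemma~\ref{lem3.12} to the abelian algebra $A(p\mid q)$ itself, and using $A(p\mid q)\wedge A(p\mid q)\cong\mathcal{M}(A(p\mid q))$ from Lemma~\ref{cor3}(1) together with the trivial-action identity $A(p\mid q)\otimes A(p\mid q)\cong A(p\mid q)\otimes_{\mathbb K}A(p\mid q)$ of superdimension $(p^2+q^2\mid 2pq)$ from Lemma~\ref{p00}, I obtain $\dim\Gamma(A(p\mid q))=(p^2+q^2\mid 2pq)-\dim\mathcal{M}(A(p\mid q))=(\tfrac12(p^2+q^2+p-q)\mid pq)$ by Lemma~\ref{th3.3}.

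Setting $p=m-r$ and $q=n-s$ then gives $\dim(H\square H)$, so that $\dim(H\otimes H)=\dim(H\wedge H)+\bigl(\tfrac12((m-r)^2+(n-s)^2+(m-r)-(n-s))\mid(m-r)(n-s)\bigr)$, and since $H\otimes H$ is abelian it is isomorphic to the abelian superalgebra of this superdimension. Performing the addition in each case reproduces the displayed formulas; the auxiliary symbols $l,k$ in Case~2 and the symbols introduced in Case~3 are merely abbreviations for regrouped parts of these sums. All the structural input is supplied by the cited lemmas, so the only genuine work is the derivation of $\dim\Gamma(A(p\mid q))$ and the case-by-case arithmetic verifying that each sum coincides with the stated expression; this bookkeeping, rather than any conceptual difficulty, is what I expect to be the main obstacle.
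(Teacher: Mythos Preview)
Your proposal is correct and follows essentially the same route as the paper: Lemma~\ref{corr3.5} gives $H\wedge H\cong\mathcal{M}(H)\oplus H^2$ (with $\dim\mathcal{M}(H)$ read off from Theorem~\ref{lemm4.1}), and then Lemma~\ref{lem3.12} together with $H\square H\cong\Gamma(H/H^2)$ from Lemma~\ref{cor4.3} yields the tensor square. The paper simply states the superdimensions of $H\square H$ in each case, whereas you derive the formula $\dim\Gamma(A(p\mid q))=(\tfrac12(p^2+q^2+p-q)\mid pq)$ and also explicitly check that $H\otimes H$ is abelian; both are useful amplifications but do not change the argument.
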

\begin{proof}
From Lemma \ref{corr3.5}, one can see that $\wedge^2H$ is abelian and $\wedge^2H\cong\mathcal{M}(H)\oplus H^2$. Hence the results for exterior product of $H$ follows from Lemma \ref{lemm4.1}. Now by using Lemma \ref{cor4.3},  we have $(H/H^2)\square(H/H^2)\cong H\square H\cong \Gamma(H/H^2)$, then 

	$$
H\square H=
\begin{cases}
	A(\frac{1}{2}m(m+1)+\frac{(n-2)(n-3)}{2}\mid m(n-2))\quad \mbox{if}\;\dim H^2=(0\mid 2)\\
	A(\frac{1}{2}(m-1)(m-2)+\frac{n(n-1)}{2} \mid (m-2)n) \quad \mbox{if}\;\dim H^2=(2\mid 0)\\
	A(\frac{1}{2}m(m-1) +\frac{(n-1)(n-2)}{2}\mid (m-1)(n-1)) \quad \mbox{if}\;\dim H^2=(1\mid 1).\\  
	
\end{cases}$$
As $\otimes^2H\cong H\wedge H\oplus H\square H$, thus the result follows.
\end{proof}

Now we determine the dimension and the structure of triple tensor product and exterior product of non-capable generalized Heisenberg Lie superalgebra of rank 2.

\begin{theorem}
	Let $H$ be an $(m\mid n)$-dimensional non-capable generalized Heisenberg Lie superalgebra
	of rank $(r \mid s)$, where $r+s=2$. Then we have the following cases:
	\begin{enumerate}
		\item  If $Z^{\wedge}(H)=H^2$, then
		
		$$
		\wedge^3H=
		\begin{cases}
			A(l\mid k)\quad \mbox{if}\;\dim H^2=(0\mid 2)\\
			A(s \mid d ) \quad \mbox{if}\;\dim H^2=(2\mid 0)\\
			A(x \mid y) \quad \mbox{if}\;\dim H^2=(1\mid 1).\\  
			
		\end{cases}$$

		$$
		\otimes^3H=
		\begin{cases}
			A(m+(m^2+3(n-2)^2)\mid (n-2)(3m^2+(n-2)^2))\quad \mbox{if}\;\dim H^2=(0\mid 2)\\
			A((m-2)((m-2)^2+3n^2) \mid 3n(m-2)^2+n^3) \quad \mbox{if}\;\dim H^2=(2\mid 0)\\
			A((m-1)((m-1)^2+3(n-1)^2) \mid (n-1)((n-1)^2+3(m-1)^2)) \quad \mbox{if}\;\dim H^2=(1\mid 1).\\  
			
		\end{cases}$$
		\smallskip
		\item If $Z^{\wedge}(H)=K\subseteq H^2$ and $\dim K=(1 \mid 0)$, then
		$$
		\wedge^3H=
		\begin{cases}
			A(x \mid y)\quad \mbox{if}\;\dim H^2=(1\mid 1)\\
			A(s+2m-7 \mid d+2n) \quad \mbox{if}\;\dim H^2=(2\mid 0).\\  
			
		\end{cases}$$
		
		$$
		\otimes^3H=
		\begin{cases}
			A((m-1)((m-1)^2+3(n-1)^2+1) \mid (n-1)((n-1)^2+3(m-1)^2+1))\quad \mbox{if}\;\dim H^2=(1\mid 1)\\
			A((m-2)((m-2)^2+3n^2+2) \mid n(n^2+3(m-2)^2+2)) \quad \mbox{if}\;\dim H^2=(2\mid 0),\\  
			
		\end{cases}$$
	
		\smallskip
		\item  If $Z^{\wedge}(H)=K\subseteq H^2$ and $\dim K=(0 \mid 1)$, then 
		$$
		\wedge^3H=
		\begin{cases}
			A(x+3(m-3) \mid y+3(n-1))\quad \mbox{if}\;\dim H^2=(1\mid 1)\\
			A(l+2m+n-5 \mid k+m+2n-6) \quad \mbox{if}\;\dim H^2=(0\mid 2).\\  
			
		\end{cases}$$
		
		$$
		\otimes^3H=
		\begin{cases}
			A((m-1)((m-1)^2+3(n-1)^2+3)\mid (n-1)((n-1)^2+(m-1)^2+3))\quad \mbox{if}\;\dim H^2=(1\mid 1)\\
			A(m(m^2+3(n-2)^2+1) \mid (n-2)(3m^2+(n-2)^2+1)) \quad \mbox{if}\;\dim H^2=(0\mid 2).\\  
			
		\end{cases}$$
			where $l=\frac{1}{2}(m(m^2-2m+9)+mn(3n-11)-(n-3)(n-2))$,\\ $k=\frac{1}{2}(3m(m-1)(n-2)+n^2(n-5)-4(n+1))$,\\ $s=\frac{1}{2}(m^2(m-8)+mn(3n+1)+19m-14-n(7n+1))$,
		$d=mn(m-12)+9n+\frac{1}{2}n^2(n+1)$,\\
		$x=\frac{1}{2}(m^2(m-5)-4n(n-2)+nm(3n-5)+8m-6)$, $y=\frac{1}{2}((n-1)(5m^2-15m+10-m^3+3m^2-2m+n^2-n)$.
	\end{enumerate}
\end{theorem}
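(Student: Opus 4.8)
The plan is to mimic the strategy of Theorem \ref{the4.5}, exploiting that a generalized Heisenberg Lie superalgebra of rank $2$ is nilpotent of class two, so that $\gamma_3(H)=0$ and $H^2=\gamma_2(H)=Z(H)$. By Lemma \ref{lem4.22}, $\otimes^3 H$ is abelian, and since $\wedge^3 H$ is a homomorphic image of $\otimes^3 H$ it is abelian as well; hence in every case it suffices to pin down the even and odd dimensions. The backbone of the argument is the exact sequence of Lemma \ref{prop4.44} with $k=2$,
\[(H^2\otimes H)\otimes H\xrightarrow{(\varphi\otimes i_H)\otimes i_H}\otimes^3 H\longrightarrow\otimes^3(H/H^2)\longrightarrow 0,\]
together with the exterior analogue $(H^2\wedge H)\wedge H\to\wedge^3 H\to\wedge^3(H/H^2)\to 0$ used in Theorem \ref{the4.5}. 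Thus $\dim\otimes^3 H=\dim\otimes^3(H/H^2)+\dim\big((\varphi(H^2)\otimes H)\otimes H\big)$, and likewise for the exterior product, so the task splits into a \emph{quotient term} and an \emph{image term} in each case.

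For the quotient term I would use that $H/H^2\cong A(m-r\mid n-s)$ is abelian, where $\dim H^2=(r\mid s)$. Iterating the abelian tensor product with trivial actions (Lemma \ref{p00}) and applying the super-dimension rule $(a\mid b)\otimes(c\mid d)=(ac+bd\mid ad+bc)$ twice gives $\otimes^3 A(p\mid q)\cong A\big(p(p^2+3q^2)\mid q(q^2+3p^2)\big)$, which already supplies the three ``bulk'' expressions in the $\otimes^3 H$ table. For the exterior quotient term I would write $\wedge^3 A(p\mid q)\cong\mathcal{M}(A(p\mid q))\wedge A(p\mid q)$, evaluating the first wedge by Lemma \ref{cor3}(1) and Lemma \ref{th3.3}, and the remaining exterior product of abelian superalgebras by Lemmas \ref{c001} and \ref{c002}; this is what produces the auxiliary parameters $l,k,s,d,x,y$.

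The heart of the proof is the image term. Since $H^2=Z(H)$ acts trivially on $H$, Lemma \ref{p00} reduces $(\varphi(H^2)\otimes H)\otimes H\cong(\varphi(H^2)\otimes H^{ab})\otimes H^{ab}$, so its dimension is the super-product $\dim(\varphi(H^2)\otimes H)\cdot\dim H^{ab}$. To find $\dim\varphi(H^2)\otimes H$ I would avoid computing the $2$-fold tensor product directly: combining Lemma \ref{lem3.12}, Lemma \ref{corr3.5} and Lemma \ref{cor4.3} with the sequence $H^2\otimes H\to H\otimes H\to H/H^2\otimes H/H^2\to 0$, the $\Gamma$-contributions cancel and one obtains the clean identity
\[\dim\varphi(H^2)\otimes H=\dim\mathcal{M}(H)+\dim H^2-\dim\mathcal{M}(H/H^2),\]
whose right-hand side is furnished by Theorem \ref{lemm4.1} and Lemma \ref{th3.3}. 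The same quantity governs the exterior image term, which is then assembled as an exterior (rather than tensor) product with $H^{ab}$ via Lemmas \ref{cor3}, \ref{c001} and \ref{c002}.

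The main obstacle I anticipate is the bookkeeping of the image term across the subcases, since its value genuinely depends on how the exterior centre $Z^{\wedge}(H)$ sits inside $H^2$. When $Z^{\wedge}(H)=H^2$ the multiplier defect cancels, forcing $\varphi(H^2)\otimes H=0$ and hence $\otimes^3 H\cong\otimes^3(H/H^2)$; but when $Z^{\wedge}(H)=K\subsetneq H^2$ the ``capable direction'' $H^2/K$ yields a nonzero image, whose even/odd split must be propagated carefully through the super-dimension product with $H^{ab}=A(m-r\mid n-s)$. Running all three values $\dim H^2\in\{(2\mid 0),(0\mid 2),(1\mid 1)\}$ against the two admissible parities of $K$, and inserting the corresponding line of Theorem \ref{lemm4.1}, yields the stated tables; the delicate point is handling the mixed derived subalgebra $\dim H^2=(1\mid 1)$, where the even and odd components of the image are most easily transposed.
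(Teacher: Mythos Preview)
Your proposal is correct and follows essentially the same approach as the paper, whose proof literally reads ``similar to the proof of Theorem~\ref{the4.5}'': use Lemma~\ref{lem4.22} for abelianness, the exact sequence of Lemma~\ref{prop4.44} (and its exterior analogue) to split into a quotient term on $H/H^2$ and an image term $(\varphi(H^2)\otimes H)\otimes H$, reduce the latter to $(\varphi(H^2)\otimes H^{ab})\otimes H^{ab}$ via Lemma~\ref{p00}, and assemble. The only cosmetic difference is that in Theorem~\ref{the4.5} the paper reads off $\dim\varphi(H^2)\otimes H$ from the already-known $\dim(H\otimes H)$ (here supplied by the preceding theorem on $\otimes^2 H$), whereas you rewrite that same quantity as $\dim\mathcal{M}(H)+\dim H^2-\dim\mathcal{M}(H/H^2)$ via Lemmas~\ref{lem3.12}, \ref{corr3.5}, \ref{cor4.3} and then invoke Theorem~\ref{lemm4.1}; the two computations are equivalent and yield the same case analysis.
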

\begin{proof}
The proof is similar to the proof of Theorem \ref{the4.5}.
\end{proof}

\begin{theorem}
Let $L$ be an $(m\mid n)$-dimensional non-abelian nilpotent Lie superalgebra with
derived subalgebra of dimension $(r\mid s)$. Then
	$$\dim\otimes^3L \leq (m+n)(m+n - (r+s))^2.$$
In particular, for $r=1,s=0$ the equality holds if and only if $L \cong H(1\mid 0)$.
\end{theorem}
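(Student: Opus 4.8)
The plan is to bound $\otimes^3L=(L\otimes L)\otimes L$ by two applications of a single reduction estimate, and then to pin down the equality case using the sharper known bound for $\dim(L\otimes L)$ together with a short finite check. First I would establish the key estimate: for a nilpotent Lie superalgebra $L$ and any Lie superalgebra $M$ acting compatibly on $L$ (so that $M\otimes L$ is defined), one has $\dim(M\otimes L)\le \dim(M)\cdot\dim(L/L^2)$. To see this, fix homogeneous generators of $L$ lying in a graded complement $V$ of $L^2$, so that $\dim V=\dim(L/L^2)$. Using the defining relation $m\otimes[l,l']=(-1)^{|l'|(|m|+|l|)}({}^{l'}m\otimes l)-(-1)^{|m||l|}({}^{l}m\otimes l')$, any generator $m\otimes z$ with $z\in L^2$ rewrites as a combination of elements $m'\otimes a$ whose second factor $a$ has strictly smaller iterated-bracket length, the first factors $m'\in M$ being action-translates of $m$. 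Since $L$ is nilpotent this recursion terminates with second factors equal to single generators, i.e. elements of $V$; hence $M\otimes L$ is spanned by $\{\,m\otimes v : m\in M,\ v\in V\,\}$, which yields the stated inequality.

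Next I would chain this estimate. Taking $M=L$ gives $\dim(L\otimes L)\le \dim(L)\cdot\dim(L/L^2)$, and taking $M=L\otimes L$ (whose action on $L$ and $L$'s action on it are compatible, as recorded before Lemma \ref{lem4.22}) gives $\dim\otimes^3L=\dim\big((L\otimes L)\otimes L\big)\le \dim(L\otimes L)\cdot\dim(L/L^2)$. Writing $p=m+n=\dim L$ and $q=r+s=\dim L^2$, so that $\dim(L/L^2)=p-q$, the two estimates combine to $\dim\otimes^3L\le \dim(L)\cdot\dim(L/L^2)^2=p(p-q)^2=(m+n)(m+n-(r+s))^2$, which is the asserted bound.

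For the equality statement with $r=1,\,s=0$ I would argue as follows. If $L\cong H(1\mid 0)$ then Theorem \ref{the4.5} gives $\otimes^3L\cong A(12\mid 0)$, and $12=3\cdot 2^2=p(p-q)^2$, so equality holds. Conversely, suppose $\dim\otimes^3L=p(p-1)^2$. The second inequality above combined with the bound $\dim(L\otimes L)\le (p-q)(p-1)+2=(p-1)^2+2$ of \cite{hasanpadprad} forces $p(p-1)^2\le\big((p-1)^2+2\big)(p-1)$, i.e. $p\le 3$. As $L$ is non-abelian with $q=1$ we have $p\ge 2$, and $p=2$ forces $L\cong H(0,1)$, for which $\dim\otimes^3L=1\ne 2$; hence $p=3$ and, retracing the chain, $\dim(L\otimes L)=6$. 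By Lemma \ref{th4.4} the remaining possibilities are $L\cong H(1,0)$, $H(0,2)$, $H(0,1)\oplus A(1\mid 0)$ and $H(0,1)\oplus A(0\mid 1)$; computing $\dim(L\otimes L)$ in each case via Lemmas \ref{lem3.12}, \ref{corr3.5}, \ref{th3.7} and \ref{prop444} shows that only $H(1,0)$ attains $6$, so $L\cong H(1\mid 0)$.

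The main obstacle I expect is twofold. The first is making the reduction estimate fully rigorous: one must verify that the action-correction terms produced by the tensor relation never raise the bracket-length of the second factor and that the proposed spanning set genuinely exhausts $M\otimes L$, so that no circularity arises in the recursion for higher nilpotency class. The second, more delicate point is the equality analysis: the general bound by itself is not sharp enough to single out $H(1\mid 0)$, so the argument must import the sharper quadratic bound for $\dim(L\otimes L)$ and then dispose of the finitely many three-dimensional cases by explicit computation of the ordinary tensor square.
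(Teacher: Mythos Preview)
Your proof is correct and takes a genuinely different route from the paper's. The paper argues by induction on the total dimension $m+n$: it invokes the exact sequence of Lemma~\ref{prop4.44},
\[
(\gamma_k(L)\otimes L)\otimes L \longrightarrow \otimes^3L \longrightarrow \otimes^3\bigl(L/\gamma_k(L)\bigr)\longrightarrow 0,
\]
bounds the left-hand term via Lemma~\ref{p00} (since $\gamma_k(L)$ is central), and applies the inductive hypothesis to the right-hand term, splitting into the cases where $L/\gamma_k(L)$ is abelian or not. By contrast, you prove a single reusable spanning lemma $\dim(M\otimes L)\le \dim M\cdot\dim(L/L^2)$ directly from the tensor relation~(3), and then chain it twice; no induction or exact sequence is needed. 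Your lemma is slightly more general (it applies to an arbitrary $M$ acting compatibly, not just $M=L$ or $M=L\otimes L$) and makes the mechanism of the bound transparent, while the paper's argument stays closer to the structural machinery already developed in the article.

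For the equality case with $(r,s)=(1,0)$ the two arguments also diverge. The paper simply quotes the classification $L\cong H(k,l)\oplus A(m-2k-1\mid n-l)$ from Lemma~\ref{th4.4} and appeals to Theorem~\ref{the4.5}; your approach instead imports the sharper bound $\dim(L\otimes L)\le (p-1)^2+2$ from \cite{hasanpadprad} to force $p\le 3$, and then disposes of the four three-dimensional candidates by computing their tensor squares. Your version is arguably tighter, since it cuts the problem down to a finite check before invoking any structure theory. The one point to make fully explicit in your write-up is that brackets of simple tensors are again simple tensors (relation~(4)), so the simple tensors already span $M\otimes L$ as a vector space; once that is said, the recursion on bracket-length in the second factor is completely rigorous.
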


\begin{proof}
Suppose $L$ is an $(m\mid n)$-dimesional non-abelian nilpotent Lie superalgebra of nilpotency class $k$. We prove by induction on $m+n$. Since $L$ is non-abelian, $m+n \geq 2$, from \cite{Hegazi} there is only one non-abelian nilpotent Lie superalgebra when $m+n = 2$, i.e., $L \cong H(0\mid 1)$. Now by Theorem \ref{the4.5}, we have $\otimes^3L \cong A(0\mid 1)$. Therefore the result holds. As $\gamma_k(L)$ is central, $\gamma_k(L)$ and $L$ act trivially on each other and so by using Lemma \ref{p00}, $\gamma_k(L)\otimes L\cong \gamma_k(L)\otimes L^{ab}$ and $(\varphi(L^2) \otimes L) \otimes L \cong (\varphi(L^2)\otimes L^{ab}) \otimes L^{ab}$. Consider the case when $L/\gamma_k(L)$ is abelian. Then $L$ is a nilpotant Lie superalgebra of class two and $\gamma_k(L) =L^2$. Thus $\dim \otimes^3L/\gamma_k(L) = \dim \otimes^3L/L^2 = (m-r\mid n-s)^3$. By using Lemma \ref{prop4.44}, we have
\begin{align*}
		\dim \otimes^3L &\leq \dim \otimes^3 (L/\gamma_k(L)) + \dim ((\gamma_k(L)\otimes L)\otimes L)\\
		& = (m-r \mid n-s)^3+(r\mid s)\times (m-r\mid n-s)^2\\
		&=(m\mid n)\times (m-r \mid n-s)^2\\
		&=(m+ n)(m+n-(r+s))^2.\\
	\end{align*}
Now if $L/\gamma_k(L)$ is non-abelian, then $\dim L/\gamma_k(L) = m+n-\dim \gamma_k(L)$ and $\dim(L/\gamma_k(L))^2 = r+s- \dim \gamma_k(L)$, thus by the induction hypothesis
\begin{eqnarray}\label{eq4.3}
	\dim \otimes^3(L/\gamma_k(L)) \leq ( m+n-\dim \gamma_k(L))(m+n -(r+s))^2.
\end{eqnarray}
 Lemma \ref{prop4.44} and  \ref{eq4.3} imply that
\begin{align*}
	\dim \otimes^3L &\leq \dim \otimes^3 (L/\gamma_k(L)) + \dim ((\gamma_k(L)\otimes L)\otimes L)\\
	& \leq ( m+n-\dim \gamma_k(L)(m+n -(r+s))^2+\dim \gamma_k(L)(m+n -(r+s))^2\\
	&=(m+ n)(m+n-(r+s))^2.\\
\end{align*}
Let $r=1, s=0$. Then from Lemma \ref{th4.4}, we have $L \cong H(k\mid l)\oplus A(m-2k-1\mid n-l)$ for all $k+l \geq 1$. By using Theorem \ref{the4.5}, the equality holds if and only if $L \cong H(1\mid 0)$.
\end{proof}

\textbf{Disclosure statement:} All authors declare that they have no conflicts of interest that are relevant to the content of this article.

\end{document}